\title{Dual of algebraic geometry codes from Hirzebruch Surfaces}
\author{Alix Barraud}
\address{CNRS; Institut de Mathématiques de Bordeaux - UMR 5251, Université de Bordeaux
351 Cours de la Libération
33405 TALENCE CEDEX, France}
\email{alix.barraud@math.u-bordeaux.fr}
\date{}
\newcommand\Lis{L_*}
\DeclareMathOperator{\Div}{Div}
\DeclareMathOperator{\Pic}{Pic}
\DeclareMathOperator{\ev}{ev}
\DeclareMathOperator{\F}{\mathbb{F}}
\DeclareMathOperator{\wt}{wt}
\DeclareMathOperator{\Image}{Im}
\DeclareMathOperator{\Supp}{Supp}
\DeclareMathOperator{\Lrr}{\mathcal{L}}
\DeclareMathOperator{\RS}{RS} 
\DeclareMathOperator{\PRS}{PRS}
\DeclareMathOperator{\WARM}{WARM}
\DeclareMathOperator{\RM}{RM}
\DeclareMathOperator{\CSS}{CSS}
\newcommand\ie{\emph{i.e.,~}}
\newtheorem{theorem}{\textsf{Theorem}}[section]
\newtheorem{lemma}[theorem]{\textsf{Lemma}}
\newtheorem{corollary}[theorem]{\textsf{Corollary}}
\newtheorem{proposition}[theorem]{\textsf{Proposition}}
\theoremstyle{definition}
\newtheorem{definition}[theorem]{\textsf{Definition}}
\newtheorem{remark}[theorem]{\textsf{Remark}}
\newtheorem{example}[theorem]{\textsf{Example}}
\newtheorem{notation}[theorem]{\textsf{Notation}}
\newtheorem{convention}[theorem]{\textsf{Convention}}
\keywords{Algebraic Geometry code, Hirzebruch surface, dual code, dual minimum distance, punctured code, CSS quantum code}
\subjclass[2020]{Primary: 94B27, 14J26. Secondary: 11G25, 14C20. }
\begin{document}




\begin{abstract}
In this paper, we give an explicit form for the dual of the algebraic geometry code $C_e(a,b)$ defined on an Hirzebruch surface $\mathcal{H}_e$ and parametrized by the divisor $aS_e + bF_e$, where $a,b\in\mathbb{N}$ and $S_e$ and $F_e$ generate the Picard group $\Pic( \mathcal{H}_e)$. Notably, we compute the minimum distance of $C_e(a,b)^\perp$. One of the main ingredient for our study is a new explicit form of the code $C_e(a,b)$ which we provide at the beginning of the paper. We also investigate some puncturing of $C_e(a,b)$, recovering other previously studied AG codes from toric surfaces. Finally, we provide a sufficient condition for orthogonal inclusions between the codes $C_e(a,b)$, and construct CSS quantum codes from them.
\end{abstract}

\sloppy
\maketitle

\section*{Introduction}
After the first construction of algebraic geometry codes (AG codes for short) on algebraic curves by Goppa \cite{goppa1981codes}, a generalisation on higher dimensional varieties was proposed by Manin and Vl\u adu\c t \cite{vladut1984linear}, and S.~H.~Hansen \cite{hansen2001error}. However, compared to codes from curves, very few is known on codes from higher dimensional varieties. On algebraic surfaces, for example, computing the dimension requires a more complex form of the Riemann--Roch theorem \cite[V. Theorem 1.6]{Hart13}, while the difference of dimension between points and divisors of a surface makes it difficult to give a general lower bound for the minimum distance. One can cite generic lower bounds given by Aubry \cite{aubry1992algebraic, aubry2006reed}, using the maximum number of rational points on a curve, and, more recently, by Couvreur, Lebacque and Perret \cite{Couv20}, with arguments involving intersection theory applied to specific linear systems of divisors (so-called "$\mathcal{P}$-interpolating"). Because of the lack of general results, the usual approach to find new algebraic geometry codes on surfaces and their parameters is to focus on a specific family of surfaces. Estimating the parameters of such codes boils down to the study of the arithmetic and the properties of these surfaces. To name a few, codes on quadric surfaces \cite{edoukou2008codes, couvreur2013evaluation} were considered, as well as on abelian surfaces \cite{aubry2021algebraic}, Del Pezzo surfaces \cite{blache2024construction}, fibered surfaces \cite{aubry2021bounds}, or even on specific blow-ups of the projective plane \cite{ballico2013codes} \cite{couvreur2011construction}. Toric surfaces were also investigated, originally by J.~P.~Hansen in \cite{hansen2002toric} and later by various authors in \cite{little2006toric, ruano2009structure, hansen2012quantum} for instance. More recently, Nardi \cite{nardi2019algebraic} combined these toric codes with a projective point of view by considering minimal Hirzebruch surfaces and evaluating at every rational points, including those "at infinity". Nardi then computes parameters of these codes with the help of combinatorics arguments and Gröbner bases, for parameterizations that go beyond the injective cases allowing a wider range of codes to be considered, including some defined on a small base field. 

Current knowledge on the duals of algebraic geometry codes also gets thin on higher dimensional varieties. On curves, differential codes are introduced \cite[Chapter 2]{Stic09} by evaluating local components of differentials forms at each rational point, namely their residues. It can be shown, using the residue formula, that these constitute the duals of corresponding "functional" AG codes. Then, the duality theorem on curves \cite[1.5.14]{Stic09}, allows one to express any differential code as a functional code. Thus, the set of all AG codes on a given curve coincides with the set of their duals, and it is relatively easy to explicitly describe the dual of a given AG code on a curve. As a consequence, generic lower bounds for the minimum distance of algebraic geometry codes on curves also apply for their duals, and studying the minimum distance also gives informations on the dual distance in most cases. This is no longer true on higher dimensional algebraic varieties, because of the lack of generic non-trivial lower bounds for the minimum distance and the fact that the dual of an AG code is no longer an AG codes in general \cite[Proposition 6.10]{grant2024differential}. For algebraic surfaces, Couvreur \cite{Couv09} constructed residues of differential $2$-forms along a curve ($1$-codimensional residue) and on a point ($2$-codimensional), allowing him to construct differential AG codes on surfaces. He proved that these codes can always be expressed as functional AG codes, and vice-versa. He also showed that duals of AG codes from surfaces are no longer AG codes in general, but can be expressed as a finite sum of AG codes \cite{Couv11}. However, the proof is not constructive and we have no clue about the minimum number of AG codes that are necessary to express a dual. As for the dual distance, a generic lower bound was given \cite[Theorem 6.1]{Couv11} but it is not always non-trivial and requires a lot of knowledge about the considered surface. As a consequence, a few duals of AG codes on surfaces are known, and their minimum distances remain difficult to estimate.

Finally, we point out that knowing the duality theory of AG codes can turn out to be useful in different contexts. For instance, duals of AG codes on curves can be used for decoding \cite[8.5]{Stic09}, and this decoding algorithm could be adapted to AG codes defined on algebraic surfaces. Furthermore, advances in the topic on quantum codes gave a new interest  for the study of the dual of various codes. Indeed, the CSS construction developed by Calderbank and Shor \cite{calderbank1996good}, and Steane \cite{steane2002enlargement} gives a quantum code from a pair of linear codes. Parameters of these quantum codes can be estimated by knowing those of the two linear codes that were used, including their dual distances. In this context, algebraic geometry codes on curves were considered, leading to good quantum codes \cite{la2017good}.

\subsection*{Contributions and organisation of the paper}
This article can be seen as a follow-up to Nardi's work \cite{nardi2019algebraic}, and hopes to pave the way toward studies of duals of AG codes from more families of surfaces. 

In Section \ref{section:1}, we recall some tools from the algebraic geometry of surfaces, mainly concerning their divisors. We then provide a geometric and "visual" (Figure \ref{fig:HirzebruchDiviseurs}) interpretation of many features of the Hirzebruch surfaces $\mathcal{H}_e$, before giving an explicit form of their Riemann--Roch spaces in Propositions \ref{prop:RRHirz} and \ref{prop:RRgeneral}.

In Section \ref{section:2}, we briefly recall some prerequisites on linear codes and AG codes. We then give a new explicit form for an AG code $C_e(a,b)$ on Hirzebruch surfaces (Proposition \ref{prop: Code Hirzebruch}), using tensor product and Reed--Solomon codes. After recalling the parameters of $C_e(a,b)$ in Proposition \ref{prop:paramcode}, its explicit form is then used to compute its dual distance in Theorem \ref{prop:dualdistance} with the help of \emph{check-product} of codes (see Definition \ref{def:checkprod}).

By puncturing $C_e(a,b)$ at the coordinates of the point at infinity, we obtain a code $C_{\mathbb{A},e}(a,b)$ of length $q^2$ whose parameters and dual are studied in Section \ref{section:3}, in particular in Proposition \ref{prop:parametreq} and Theorems \ref{prop:dualdistanceq} and \ref{theo:dualq}. The dual $C_{\mathbb{A},e}(a,b)^\perp$ being isomorphic to a big subspace of $C_e(a,b)^\perp$, it allows us in Section \ref{section:4} to finally compute an explicit form of $C_e(a,b)^\perp$ (see Theorem \ref{theo:dual}).

In Section \ref{section:equivalence}, we prove some technical results to obtain pairs of AG codes defined on $\mathcal{H}_e$, one containing the other. We also use the dual of $C_e(a,b)^\perp$ to construct pairs of orthogonal AG codes defined on an Hirzebruch surface. Finally, we conclude the paper by using these pairs of linear codes to construct CSS quantum codes in Section \ref{section:css}.

\subsection*{Note} In all the paper, we fix $\F_q$ a finite field of any positive characteristic and of cardinality $q$. For two integers $k,l \in \mathbb{Z}$, we will denote $\llbracket k,l \rrbracket \coloneqq \{ x \in \mathbb{Z} \, | \, k \leq x \leq l \}$.
\section{Basic properties of Hirzebruch surfaces}\label{section:1}

We describe in this section the Hirzebruch surfaces, denoted $\mathcal{H}_e$ for $e \in \mathbb{N}$. Our main goal is to provide a visual interpretation of these varieties (see Figure \ref{fig:HirzebruchDiviseurs}), as well as an explicit description of their Riemann--Roch spaces (see Subsection \ref{ss:RR}).

\subsection{Algebraic surfaces and their geometry} Let $\mathcal{S}$ be a smooth projective and geometrically integral surface over $\mathbb{F}_q$, and denote $\F_q(\mathcal{S})$ its function field. In what follows, we will introduce basic notions on the geometry of algebraic surfaces, the known results being presented here without proof. For more details, we refer the reader to \cite{Liu02, Shaf13, Hart13}.
\subsubsection{Divisors on a surface}
 A \emph{prime divisor} $C$ of $\mathcal{S}$ is an irreducible and reduced curve of $\mathcal{S}$, and we call \emph{divisor} a formal sum $D \coloneqq \sum_{i=0}^s n_iC_i$ where $s \in \mathbb{N}$, and for all $i \in \llbracket 0,s \rrbracket$, $n_i \in \mathbb{Z}$ and $C_i$ is a prime divisor. We denote $\Div(\mathcal{S})$ the abelian group of all divisors of $\mathcal{S}$. We say that $D=\sum_{i=0}^s n_i C_i \in \Div(\mathcal{S})$ is \emph{positive} when $n_i \geq 0$ for all $i$; and for all $D_1, D_2 \in \Div(\mathcal{S})$, we write $D_1 \geq D_2$ when their difference $D_1 - D_2$ is positive. Finally, for $D = \sum_{i=0}^s n_i C_i \in \Div(\mathcal{S})$ where the $C_i$'s are distinct prime divisors, we call $\Supp(D) \coloneqq \sum_{i=0}^s C_i$ the \emph{support} of $D$, and we say that a point $P$ of $\mathcal{S}$ is \emph{in the support of} $D$ if there exists $i \in \llbracket 1, s \rrbracket$ such that $P \in C_i$. We will call \emph{curve} on $\mathcal{S}$ a positive divisor, \ie a divisor $C \in \Div( \mathcal{S})$ such that $D \geq 0$.

\subsubsection{Linear equivalence and Riemann--Roch space}

Let $f \in \F_q(\mathcal{S}) \setminus \{ 0 \}$. We associate to $f$ its divisor $(f) = (f)_0 - (f)_\infty \in \Div(\mathcal{S})$ where $(f)_0 \geq 0$ and $(f)_\infty\geq 0$ are the divisors associated to the zero and the pole locus, respectively, both accounting for multiplicity so that for all $f_1$, $f_2 \in \F_q(\mathcal{S}) \setminus \{ 0 \}$ we have $(f_1 f_2) = (f_1) + (f_2)$. Such a divisor is called \emph{principal}, and two divisors $D_1$ and $D_2 \in \Div(\mathcal{S})$ are \emph{linearly equivalent} if their difference $D_1 - D_2$ is principal, denoted $D_1 \simeq D_2$. Principal divisors form a subgroup of $\Div(\mathcal{S})$, and the \emph{Picard group} $\Pic(\mathcal{S})$ of $\mathcal{S}$ is defined as the quotient
\[
\Pic(\mathcal{S}) \coloneqq \Div( \mathcal{S}) / \simeq \, .
\] 
The \emph{Riemann--Roch space} $L(D)$ associated to a divisor $D \in \Div(\mathcal{S})$ is the $\F_q$-vector space
\[
L(D) \coloneqq \{ f \in \F_q(\mathcal{S})\setminus\{0\} | (f) \geq -D \} \cup \{ 0 \}.
\]
For all divisor $D_2 \simeq D_1$ with $D_2+ (f) = D_1$, we have $fL(D_2) \coloneqq \{ fg_2 \mid g_2 \in L(D_2) \} = L(D_1)$. Let $U \subset \mathcal{S}$ be an open subset, we denote $\Lrr(D)(U)$ the set of all function $f \in L(D)$ restricted to $U$. For example, the Riemann--Roch space $L(D) = \Lrr(D)(\mathcal{S})$ is the set of all global sections associated to the divisor $D$.

\subsubsection{Intersection number}
Let $\overline{\F}_q$ be the algebraic closure of $\F_q$ and consider the base field extension $\overline{\mathcal{S}} \coloneq \mathcal{S} \times_{\F_q} \overline{\F}_q$ of $\mathcal{S}$. Let $C$ and $D$ be two curves on $\overline{\mathcal{S}}$. We say that $C$ and $D$ meet \emph{transversally} at a point $P \in \overline{\mathcal{S}}$ if $P \in C \cap D$ and if the local equations of $C$ and $D$ at $P$ generate the maximal ideal $\mathrm{m}_P$ of the ring $\mathcal{O}_{\overline{\mathcal{S}},P}$ of all the rational functions on $\overline{\mathcal{S}}$ that are regular in a neighbourhood of $P$. The curves $C$ and $D$ are said to be \emph{in general position} if they meet transversally at every point $P \in C \cap D$, and we define their \emph{intersection number} to be $C.D \coloneqq \# C \cap D$. This number only depends on the linear equivalence classes of $C$ and $D$, and can be extended by symmetry and bilinearity to all couples of divisors. For example, each divisor $D$ has a \emph{self--intersection number} $D^2 \coloneqq D.D$.

\subsection{Description of $\mathcal{H}_e$}

For the rest of this paper, we set $e \in \mathbb{N}$. The Hirzebruch surface $\mathcal{H}_e$ over $\F_q$ is the unique rational ruled surface over $\F_q$ with invariant $e$ defined in \cite[V.2.13]{Hart13}. The surface $\mathcal{H}_e$ is a ruled surface over $\mathbb{P}^1$, meaning that there exists a morphism $\pi_e: \mathcal{H}_e \rightarrow \mathbb{P}^1$, and its Picard group is generated by a section $S_e$ and a fibre $F_e$ of $\pi_e$, with the intersection numbers
\[ F_e^2 = 0 \, , \, F_e.S_e = 1 \text{ and } S_e^2=-e \, .
\]
The open subset $U \coloneqq \mathcal{H}_e \setminus (S_e \cup F_e)$ is isomorphic to $\mathbb{A}^2$, giving to $\mathcal{H}_e$ its rationality.
Several explicit constructions of $\mathcal{H}_e$ are given in the literature, for instance as a rational ruled surface \cite[V.2]{Hart13} or as a toric variety  \cite{cox2024toric}. All these constructions are equivalent by the uniqueness of such a surface. 

We describe below an explicit construction of the surfaces $\mathcal{H}_e$, by a sequence of blow-up and blow-down, as given for instance in \cite[1.6.3]{Couv20} or in \cite[V.5]{Hart13} for any ruled surfaces. 
\begin{itemize}
 \item The surface $\mathcal{H}_0$ is the product $\mathbb{P}^1 \times \mathbb{P}^1$, the divisors $S_0$ and $F_0$ are two non-equivalent lines that intersect at a single point $Q_0$. We have the intersection numbers $S_0^2 = F_0^2 = 0$ and $S_0.F_0 = 1$. See Figure \ref{fig:Hirzebruch0et1}.
\item $\mathcal{H}_1$ is the blow-up of $\mathbb{P}^2$ at a point $P \in \mathbb{P}^2$, and we denote $S_1$ the exceptional divisor, $F_1$ the strict transform of a line $L$ of $\mathbb{P}^2$ containing $P$, and $Q_1$ the point at the intersection between $S_1$ and $F_1$. Since $S_1$ is the exceptional divisor of a blow-up, we have $S_1^2=-1$; and since $F_1$ intersect with $S_1$, then $F_1^2=L^2 - 1 = 0$. See Figure \ref{fig:Hirzebruch0et1}.
\item To construct $\mathcal{H}_{e+1}$ from $\mathcal{H}_e$, denote $Q_e$ the point at the intersection of $S_e$ and $F_e$. The blow-up of $\mathcal{H}_e$  at $Q_e$ is a surface on which the strict transform of $F_e$ is a projective line with self-intersection number $-1$, and $\mathcal{H}_{e+1}$ is obtained by blowing it down, using Castelnuovo's criterion \cite[V.5]{Hart13}. See Figure \ref{fig:HirzebruchConstruction} for an illustration, which also describes how to construct $\mathcal{H}_{e}$ from $\mathcal{H}_{e+1}$.
\end{itemize}

\begin{figure}
\begin{center}
\begin{tikzpicture}
\begin{scope}[scale=0.7]

\begin{scope}[xshift = -8cm]
\foreach \i in {0,...,4}
	\draw [dashed] (0,\i)--(5,\i);
\foreach \i in {1,...,5}
	\draw [dashed] (\i ,0)--(\i ,5);
\draw [color = red, thick] (0,0) -- (0,5);
\draw [color = blue, thick] (0,5) -- (5,5);
\draw (0,5) node {$\bullet$};
\draw (0,5) node [above left] {$Q_0$};
\draw (0,2.5) node [left] {$\textcolor{red}{S_0}$};
\draw (2.5,5) node [above] {$\textcolor{blue}{F_0}$};
\draw (2.5,-1) node {$\mathcal{H}_0 = \mathbb{P}^1 \times \mathbb{P}^1$};
\draw (6,2) node {;};
\end{scope}

\foreach \i in {0,...,4}
	\draw [dashed] (0,\i)--(5,\i);
\foreach \i in {1,...,5}
	\draw [dashed] (\i ,0)--(3 ,5);
\draw [color = red, thick] (0,0) -- (0,5);
\draw [color = blue, thick] (0,5) -- (5,5);
\draw (0,5) node {$\bullet$};
\draw (0,5) node [above left] {$Q_1$};
\draw (0,0) node [below] {$\textcolor{red}{S_1} = \pi^{-1}(P)$};
\draw (2.5,5) node [above] {$\textcolor{blue}{F_1}$};
\draw (2.5,-1) node {$\mathcal{H}_1$};
\draw [ ->] (5.5,2.5) -- (7,2.5);
\draw (6.25,2.5) node [above] {$\pi$};
\draw (9.5,-1) node {$\mathbb{P}^2$};

\begin{scope}[xshift = 7.5cm, yshift= 2.5cm, scale=1.2]
\draw [color = blue, thick] (0,0)--(3,2);
\draw (1.5,1) node [above left] {$\textcolor{blue}{L}$};
\foreach \i in{-2,-1.2,...,2}
	\draw [dashed] (3,\i) -- (0,0);
\foreach \i in {1,...,5}
	\draw [dashed] (0.6*\i, -0.4*\i) -- (3,2);
\draw (0,0) node {$\textcolor{red}{\bullet}$};
\draw (0,0) node [above] {$P$};
\end{scope}

\end{scope}
\end{tikzpicture}
\end{center}
\caption{The surfaces $\mathcal{H}_0$ and $\mathcal{H}_1$.}
\label{fig:Hirzebruch0et1}
\end{figure}
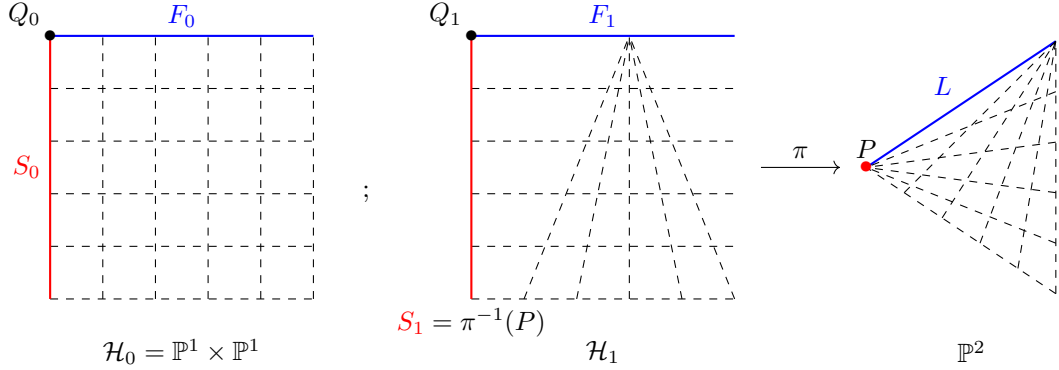 
\begin{figure}
\begin{center}
\begin{tikzpicture}
\begin{scope}[scale=0.7]

\begin{scope}[xshift=-7.5cm]
\draw [color = red, thick] (0,0) -- (0,5);
\draw [color = blue, thick] (0,5) -- (5,5);

\foreach \i in {0,...,4}
	\draw [dashed] (0,\i)--(5,\i);
\foreach \i in {1,...,5}{
	\draw [dashed] (\i ,4)--(\i ,0);
	\draw [dashed] (\i,4) ..controls +(0,0.5) and +(-0,-0.5).. (3,5);
	\draw [dashed] (\i,6) ..controls +(0,-0.5) and +(-0,0.5).. (3,5);
}
\fill [color=white] (0.5,5.5) -- (5.1,5.5) -- (5.1,6.1) -- (0.5,6.1) -- cycle;
\draw (3,5) node {$\bullet$};
\draw (0,5) node {$\textcolor{olive}{\bullet}$};
\draw (0,5) node [above left] {$Q_e$};
\draw (0,2.5) node [left] {$\textcolor{red}{S_e}$};
\draw (1,5) node [above] {$\textcolor{blue}{F_e}$};
\draw (2.5,-1) node {$\mathcal{H}_e$};

\draw [<-] (5.5,2.5) -- (7.,2.5);
\draw (6.25,2.5) node [above] {$\varphi$};
\end{scope}

\draw [color = red, thick] (0,0) -- (0,5);

\foreach \i in {0,...,4}
	\draw [dashed] (0,\i)--(5,\i);
\foreach \i in {1,...,5}{
	\draw [dashed] (\i ,0)--(\i ,4);
}

\draw [color = blue, thick] (5,5) -- (1.25,6.25);

\foreach \i in {1,...,5}{
	\draw [dashed] (\i,4) ..controls +(0,0.75) and +(-0.25,-0.5).. (3.25,5.55);
}

\begin{scope}[xshift=3.25cm, yshift=5.55cm]
\begin{scope}[rotate=-20]

\foreach \i in {1,...,5}
	\draw [dashed] (\i-3,1) ..controls +(0,-0.5) and +(-0,0.5).. (0,0);

\fill [color=white] (-2.05,0.5) -- (2.6,0.5) -- (2.6,1.05) -- (-2.05,1.05) -- cycle;
\end{scope}
\end{scope}
\draw [color = olive, thick] (0,5) -- (2.5,7.5);
\draw (2.5,7.5) node [right] {$\varphi^{-1}(Q_e)$};
\draw (0,5) node {$\bullet$};
\draw (3.25,5.55) node {$\bullet$};

\draw (6.25,2.5) node [above] {$\pi$};
\draw [->] (5.5,2.5) -- (7.,2.5);

\begin{scope}[xshift=7.5cm]
\draw [color = red, thick] (0,0) -- (0,5);
\draw [color = olive, thick] (0,5) -- (5,5);

\foreach \i in {0,...,4}
	\draw [dashed] (0,\i)--(5,\i);
\foreach \i in {1,...,5}{
	\draw [dashed] (\i ,4)--(\i ,0);
	\draw [dashed] (\i,4) ..controls +(0,0.75) and +(-0,-1).. (3,5);
	\draw [dashed] (\i,6) ..controls +(0,-0.5) and +(-0,1).. (3,5);
}
\fill [color=white] (0.5,5.65) -- (5.1,5.65) -- (5.1,6.1) -- (0.5,6.1) -- cycle;
\draw (3,5) node {$\textcolor{blue}{\bullet}$};
\draw (0,5) node {$\bullet$};
\draw (0,5) node [above left] {$Q_{e+1}$};
\draw (0,1.5) node [left] {$\textcolor{red}{S_{e+1}}$};
\draw (1,5) node [above] {$\textcolor{olive}{F_{e+1}}$};
\draw (2.5,-1) node {$\mathcal{H}_{e+1}$};

\end{scope}

\end{scope}
\end{tikzpicture}
\end{center}
\caption{Construction of $\mathcal{H}_{e+1}$ from $\mathcal{H}_e$.}
\label{fig:HirzebruchConstruction}
\end{figure}
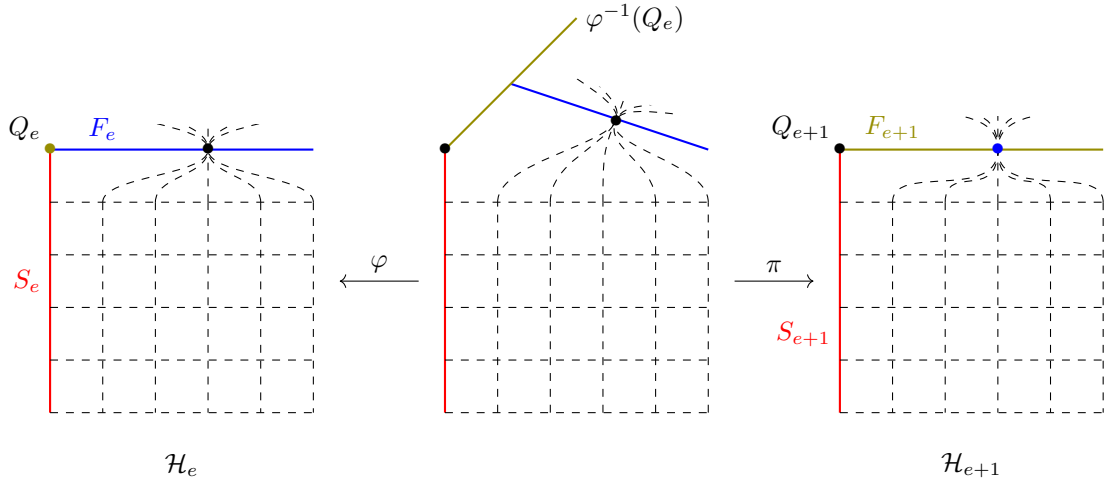

\begin{remark}
From the construction above we see that the only value of $e$ for which $\mathcal{H}_e$ contains a projective line with self intersecting number $-1$ is $e=1$, making $\mathcal{H}_1$ the only non-minimal Hirzebruch surface.
\end{remark}

This construction of Hirzebruch surfaces by composition of blow-ups and blow-downs gives a birational map between $\mathbb{P}^2$ and $\mathcal{H}_e$, with an isomorphism between the affine varieties $\mathbb{A}^2 = \mathbb{P}^2 \setminus L$ and $\mathcal{H}_e \setminus (S_e \cup F_e)$. By considering their intersection with the open $\mathbb{A}^2$, each line of $\mathbb{P}^2$ distinct from $L$ that contains $P$ gives a fibral divisor $F_e' \in \mathcal{H}_e$, and each line that does not contain $P$ gives another section $\sigma_e$ (see Figure \ref{fig:HirzebruchDiviseurs}).

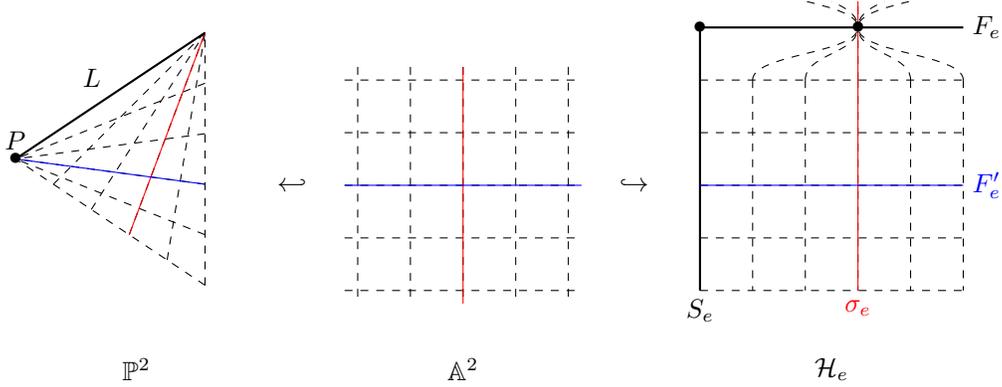
\begin{figure}
\begin{center}
\begin{tikzpicture}
\begin{scope}[scale=0.7]
\begin{scope}[xshift=-7.5cm]

\begin{scope}[xshift = -5.5cm, yshift= 2.5cm, scale=1.2] 
\draw (1.5,1) node [above left] {$L$};
\foreach \i in{-2,-1.2,...,2}
	\draw [dashed] (3,\i) -- (0,0);
\foreach \i in {1,...,5}
	\draw [dashed] (0.6*\i, -0.4*\i) -- (3,2);
\draw [color = red] (1.8,-1.2)--(3,2);
\draw [color = blue] (3,-0.4) -- (0,0);
\draw (0,0) node {$\bullet$};
\draw (0,0) node [above] {$P$};
\draw [thick] (3,2) -- (0,0);
\end{scope}
\draw (-3.2, -1.5) node {$\mathbb{P}^2$};

\foreach \i in {0,...,4}
	\draw [dashed] (0.75,\i)--(5.25,\i);
\foreach \i in {1,...,5}{
	\draw [dashed] (\i ,4.25)--(\i ,-0.25);
}

\draw [color = red] (3,4.25) -- (3,-0.25);
\draw [color = blue] (0.75,2) -- (5.25,2);

\draw (3,-1.5) node {$\mathbb{A}^2$};

\draw (6.25,2) node {$\hookrightarrow$};
\draw (-.25,2) node {$\hookleftarrow$};
\end{scope}

\draw [thick] (0,0) -- (0,5);

\foreach \i in {0,...,4}
	\draw [dashed] (0,\i)--(5,\i);
\foreach \i in {1,...,5}{
	\draw [dashed] (\i ,4)--(\i ,0);
	\draw [dashed] (\i,4) ..controls +(0,0.5) and +(-0,-0.5).. (3,5);
	\draw [dashed] (\i,6) ..controls +(0,-0.5) and +(-0,0.5).. (3,5);
}
\draw [color = red] (3,5.5) -- (3,0);
\draw [color = blue] (0,2) -- (5,2);

\fill [color=white] (0.5,5.5) -- (5.1,5.5) -- (5.1,6.1) -- (0.5,6.1) -- cycle;
\draw [thick] (5,5) -- (0,5);

\draw (3,5) node {$\bullet$};
\draw (0,5) node {$\bullet$};
\draw (0,0) node [below] {$S_e$};
\draw (5,5) node [right] {$\textcolor{black}{F_e}$};
\draw (3,0) node [below] {$\textcolor{red}{\sigma_e}$};
\draw (5,2) node [right] {$\textcolor{blue}{F'_e}$};
\draw (2.5,-1.5) node {$\mathcal{H}_e$};

\end{scope}
\end{tikzpicture}
\end{center}
\caption{The fibres $F'_e$ and $F_e$, and the sections $S_e$ and $\sigma_e \simeq S_e + eF_e$.}
\label{fig:HirzebruchDiviseurs}
\end{figure}

The curve $\sigma_e$ does not meet $S_e$ but intersect $F_e$ transversally, while $F'_e$ intersect $S_e$ transversally without meeting $F_e$. Using the intersection numbers of $F_e$ and $S_e$ (which, we recall, generate the Picard group of $\mathcal{H}_e$), we have the linear equivalences 

\begin{equation*}
\sigma_e \simeq S_e + eF_e \;\text{ and }\; F'_e \simeq F_e \, .
\end{equation*}
In \cite{nardi2019algebraic}, the elements of the Picard group are given with $\sigma_e$ and $F_e$ as generators, and one must keep in mind that for all $(a,b) \in \mathbb{Z}^2$, $aS_e + bF_e \simeq a \sigma_e + (b-ea)F_e$.

A property that distinguishes $\sigma_e$ from $S_e$ when $e > 0$ is its self-intersecting number $\sigma_e^2 = e \geq 0$. This number comes from the successive blow-downs in our construction of $\mathcal{H}_e$ from $\mathbb{P}^2$, where $\sigma_e$ meets other sections at a point of $F_e$ distinct from $Q_e$, with multiplicity $e$ (see Figure \ref{fig:HirzebruchConstruction}).

\subsection{Riemann--Roch spaces of $\mathcal{H}_e$}\label{ss:RR}

To end this section, we describe the Riemann--Roch space associated to a divisor on a Hirzebruch surface.

\begin{notation}
In what follows, we denote by $\F_q[X,Y]^h_{k}$ the $\F_q$-vector space of all homogeneous polynomials of $\F_q[X,Y]$ of degree $k \in \mathbb{N}$.
\end{notation}

\begin{proposition}\label{prop:RRHirz}
Let $a,b \in \mathbb{N}$. The Riemann--Roch space $L(aS_e+bF_e)$ is isomorphic to the $\F_q$--vector space denoted $\Lis (aS_e + bF_e)$ generated by all monomials $M \in \F_q[X_1,X_2,T_1,T_2]$ of the form $M=X_1^{d_1}X_2^{d_2}T_1^{c_1}T_2^{c_2}$, with
\begin{equation*}\label{MonomHirz}
\tag{$\bigstar_{a,b}$}
 \left\{
\begin{array}{c}
b-ea = c_1 + c_2 - ed_1,\\
a = d_1 + d_2
\end{array}.
\right . \,
\end{equation*}
The space $\Lis(aS_e+bF_e)$ is explicitely given by

\begin{equation*}\label{RRHirz}
\tag{$L_{a,b}$}
\Lis (aS_e + bF_e) = \sum_{d_1 = 0}^a \left[ \langle ( X_1^{d_1} X_2^{a-d_1} ) \rangle \otimes  \F_q[T_1,T_2]^h_{b-ea+ed_1} \right] \, .
\end{equation*}
\end{proposition}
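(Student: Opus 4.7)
The proof is most naturally organised around the toric, equivalently Cox-ring, description of $\mathcal{H}_e$. The surface admits a presentation as the geometric quotient
\[
\mathcal{H}_e \;=\; \bigl( (\mathbb{A}^2_{X_1, X_2} \setminus \{0\}) \times (\mathbb{A}^2_{T_1,T_2} \setminus \{0\}) \bigr)\big/(\mathbb{G}_m)^2,
\]
whose total coordinate ring is the polynomial ring $\F_q[X_1, X_2, T_1, T_2]$, endowed with a $\Pic(\mathcal{H}_e) = \mathbb{Z}\langle S_e\rangle \oplus \mathbb{Z}\langle F_e\rangle$-grading. Identifying the four coordinate hyperplanes with $S_e$, $\sigma_e \simeq S_e + eF_e$, $F_e$ and $F'_e \simeq F_e$ from Figure \ref{fig:HirzebruchDiviseurs} yields
\[
\deg X_1 = S_e, \quad \deg X_2 = S_e + eF_e, \quad \deg T_1 = \deg T_2 = F_e.
\]
The first step of the proof is therefore to make this dictionary explicit, justifying it either by invoking the standard toric construction or by building $\mathcal{H}_e$ as the $\mathbb{P}^1$-bundle $\mathbb{P}(\mathcal{O}_{\mathbb{P}^1} \oplus \mathcal{O}_{\mathbb{P}^1}(-e))$ over $\mathbb{P}^1$ and reading off the divisor classes of the four obvious sections and fibral components.

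With this set-up, a monomial $M = X_1^{d_1} X_2^{d_2} T_1^{c_1} T_2^{c_2}$ has bidegree $\deg M = (d_1+d_2)\, S_e + (ed_2 + c_1 + c_2)\, F_e$, and imposing $\deg M = aS_e + bF_e$ yields, after substituting $d_2 = a-d_1$, exactly the two conditions $(\bigstar_{a,b})$. I then invoke the standard fact (valid for any smooth complete toric variety) that the graded piece of the Cox ring in degree $[D]$ is canonically isomorphic to $H^0(\mathcal{H}_e, \mathcal{O}(D))$. To transfer the statement from global sections to the Riemann--Roch space $L(aS_e + bF_e)$ of rational functions, I fix a nonzero monomial $M_0$ of bidegree $(a,b)$ and send each monomial $M$ of that same bidegree to the rational function $M/M_0$; the defining condition $(f) \geq -(aS_e+bF_e)$ then becomes exactly the statement that $M$ is a genuine polynomial in the Cox coordinates, which is automatic.

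The decomposition $(L_{a,b})$ now falls out from a purely combinatorial observation: fixing $d_1 \in \llbracket 0, a \rrbracket$ forces $d_2 = a-d_1$, and the remaining pair $(c_1, c_2) \in \mathbb{N}^2$ varies freely subject only to $c_1 + c_2 = b - ea + ed_1$, so the contribution of this value of $d_1$ is precisely $\langle X_1^{d_1} X_2^{a-d_1}\rangle \otimes \F_q[T_1,T_2]^h_{b-ea+ed_1}$ (understood to be zero when the exponent is negative); distinct $d_1$ give monomials of distinct $(X_1,X_2)$-bidegree, making the sum direct. The conceptually delicate point is the identification of $L(aS_e+bF_e)$ with a graded piece of the Cox ring; the alternative, if one prefers to avoid the toric black box, is to cover $\mathcal{H}_e$ by its four standard affine charts, write an arbitrary $f \in L(aS_e+bF_e)$ as a Laurent polynomial in the local coordinates on each chart, and verify that the four gluing constraints on overlaps together with the prescribed pole orders along $S_e$ and $F_e$ force $f$ to be an $\F_q$-linear combination of the monomials satisfying $(\bigstar_{a,b})$.
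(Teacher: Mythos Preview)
Your proposal is correct and follows essentially the same approach as the paper: the paper's proof cites \cite{nardi2019algebraic} for the identification of $L(aS_e+bF_e)$ with the monomials satisfying $(\bigstar_{a,b})$ via the toric interpretation of $\mathcal{H}_e$, and then performs exactly the combinatorial regrouping by $d_1$ that you describe to obtain $(L_{a,b})$. The only difference is that you unpack the Cox-ring argument underlying Nardi's result rather than simply citing it, so your write-up is more self-contained but otherwise identical in strategy.
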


\begin{proof}
The system of equations \eqref{MonomHirz} for a basis of $\Lis(aS_e + bF_e)$ is described in \cite{nardi2019algebraic} using the interpretation of $\mathcal{H}_e$ as a toric variety.

We fix $d_1 \in \llbracket 0,a \rrbracket$. For all monomials $M=X_1^{d_1}X_2^{d_2}T_1^{c_1}T_2^{c_2}$ verifying Equation \eqref{MonomHirz}, we have $d_2 = a - d_1$ and $c_1 + c_2 = b-ea + ed_1$. These monomials form a basis of $$(X_1^{d_1}X_2^{a-d_1})\F_q[T_1,T_2]^h_{b-ea+ed_1} = \langle  ( X_1^{d_1} X_2^{a-d_1} ) \rangle \otimes  \F_q[T_1,T_2]^h_{b-ea+ed_1}.$$ By summing over all values of $d_1 \in \llbracket 0,a \rrbracket$, we obtain the equality \eqref{RRHirz}.
\end{proof}

The equations for the fibers and the sections $S_e$ and $\sigma_e$ can be expressed using $T_i$ and $X_i$, $i \in \{1,2\}$:

\begin{equation*}
F_e = \{T_1 = 0 \} \,, \, F'_e = \{T_2 = 0 \} \, , \, S_e = \{X_1 = 0 \} \,, \, \sigma_e = \{X_2 = 0 \}.
\end{equation*}

The set $\Lis(aS_e + bF_e)$ in Equation \eqref{RRHirz} of Proposition \ref{prop:RRHirz} is the set of all regular functions on $\mathcal{H}_e \setminus (F_e \cup S_e) = \{X_1T_1 \neq 0 \}$ of \emph{bidegree} $(a,b-ea)$. Since $\mathcal{H}_e$ and $\mathbb{P}^2$ are birational, they both share the same function field $\F_q(X,Y)$. However, we can also write the set of invertible functions on $\mathcal{H}_e$ as the set of rational fractions of \emph{bidegree} zero:
\[
\F_q(\mathcal{H}_e) = \left\{ \frac{f_1}{f_2} \,\bigg\rvert \, f_1,f_2 \in \Lis(aS_e + bF_e) \setminus \{ 0 \} , (a,b) \in \mathbb{N}^2 \right\} \cup \{0 \} \, .
\]
Thus, if one wants to interpret the Riemann--Roch space associated to $aS_e+bF_e$ as a subset of $\F_q(\mathcal{H}_e)$, an isomorphism is given by
\[
L(aS_e + bF_e) = \left( \frac{1}{X_1^aT_1^b} \right) L_*(aS_e + bF_e) \subset \F_q(\mathcal{H}_e) \, ,
\]
with the space $L_*(aS_e + bF_e)$ explicitly described in Proposition \ref{prop:RRHirz}. We generalize this formula for any divisor of $\mathcal{H}_e$ in the next proposition.

\begin{proposition}\label{prop:RRgeneral}
Let $G \in \Div(\mathcal{H}_e)$ and write $G + (f) = aS_e + bF_e$ where $f \in \F_q(\mathcal{H}_e)$ and $(a,b) \in \mathbb{Z}^2$. Its Riemann--Roch space is 
\[
L( G) = \left( \frac{f}{X_1^aT_1^{b}} \right)  L_*(aS_e + bF_e) \, ,
\]
where $L_*(aS_e+bF_e)$ is given in Equation \eqref{RRHirz} of Proposition \ref{prop:RRHirz}.
\end{proposition}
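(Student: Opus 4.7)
The plan is to reduce the statement to the special case $G = aS_e + bF_e$ already treated in Proposition \ref{prop:RRHirz}, exploiting the fact that linearly equivalent divisors have isomorphic Riemann--Roch spaces. The hypothesis $G + (f) = aS_e + bF_e$ says precisely that $G \simeq aS_e + bF_e$, and one expects multiplication by $f$ to carry $L(aS_e + bF_e)$ bijectively onto $L(G)$.

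First I would verify directly from the definition of $L(\cdot)$ that, as subsets of $\F_q(\mathcal{H}_e)$,
\[
L(G) = f \cdot L(aS_e + bF_e).
\]
For the inclusion $\supseteq$, any $g \in L(aS_e + bF_e)$ satisfies $(g) \geq -(aS_e + bF_e) = -G - (f)$, so $(fg) = (f) + (g) \geq -G$ and $fg \in L(G)$. For $\subseteq$, given $h \in L(G)$, set $g \coloneqq h/f$; then $(g) = (h) - (f) \geq -G - (f) = -(aS_e + bF_e)$, so $g \in L(aS_e + bF_e)$ and $h = fg$ lies in the right-hand side.

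Then I would invoke Proposition \ref{prop:RRHirz}, in the form stated in the paragraph preceding it, namely the identification
\[
L(aS_e + bF_e) = \frac{1}{X_1^a T_1^{b-ea}} L_*(aS_e + bF_e)
\]
as subsets of $\F_q(\mathcal{H}_e)$. Substituting this equality into the one obtained in the first step yields immediately
\[
L(G) = \frac{f}{X_1^a T_1^{b-ea}} L_*(aS_e + bF_e),
\]
which is the desired formula.

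The argument is essentially routine once the two ingredients (linear equivalence of Riemann--Roch spaces and Proposition \ref{prop:RRHirz}) are in hand, so I do not foresee a genuine obstacle. The only point requiring care is the direction of the isomorphism in the first step, i.e.\ making sure that $f$ itself (rather than $f^{-1}$) appears in the numerator of the final expression; this is pinned down unambiguously by the divisor inequalities written above.
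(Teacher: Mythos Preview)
Your proposal is correct and follows essentially the same approach as the paper: the paper also verifies directly from the divisor inequalities that $h \in L(G)$ if and only if $h/f \in L(aS_e+bF_e) = \frac{1}{X_1^a T_1^{b-ea}}L_*(aS_e+bF_e)$, which is exactly your two steps collapsed into a single chain of equivalences.
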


\begin{proof}
The equality $G + (f) = aS_e + bF_e$ comes from the generating family for the Picard group $\Pic(\mathcal{H}_e) = \mathbb{Z}S_e + \mathbb{Z}F_e$. Let $h \in \F_q(\mathcal{H}_e) \setminus \{ 0 \}$, then the following chain of equivalences holds
\begin{align*}
h \in L( G) &\Longleftrightarrow (h) \geq -G = -aS_e - bF_e + (f) \\
& \Longleftrightarrow \left( \frac{h}{f} \right) \geq -aS_e - bF_e \\
& \Longleftrightarrow \frac{h}{f} \in  \left( \frac{1}{X_1^aT_1^{b}} \right) L_*(aS_e + bF_e) \, ,
\end{align*}
concluding the proof.
\end{proof}

As described in \cite[2.2]{nardi2019algebraic}, associating to all monomial $M=X_1^{d_1}X_2^{d_2}T_1^{c_1}T_2^{c_2}$ verifying Equation \eqref{MonomHirz}  the point of coordinates $(d_2,c_2) \in \mathbb{N}^2$, a basis for $\Lis(aS_e + bF_e)$ is represented by all the points of $\mathcal{P}_{a,b} \cap \mathbb{N}^2$, where we define the polygon \[\mathcal{P}_{a,b} = \{ (x,y) \in \mathbb{R}^2 \, | \, 0 \leq x \leq a \, \text{and} \, 0 \leq y \leq b - e x\} \, . \]
See \cite[Fig. 2]{nardi2019algebraic} for an illustration of the different possible shapes for $\mathcal{P}_{a,b}$, namely a trapezoid when $b-ea \geq 0$ (a rectangle when $e=0$) and a triangle otherwise.

\section{Algebraic Geometry codes on $\mathcal{H}_e$ and their dual distance}\label{section:2}

In this section, we first recall the notion of linear and algebraic geometry (AG) codes , secondly we give in Definition \ref{def:codehirz} the construction of the AG code $C_e(a,b)$ on the Hirzebruch surface $\mathcal{H}_e$, parametrized by the divisor $aS_e + bF_e$. We then give an explicit form of $C_e(a,b)$ in Proposition \ref{prop: Code Hirzebruch}, using Reed--Solomon codes. Thanks to this explicit form, we compute the dimension $k_e(a,b)$ and the dual minimum distance $d_e(a,b)^\perp$ of $C_e(a,b)$, in Proposition \ref{prop:paramcode} and Theorem \ref{prop:dualdistance}, respectively. 
\subsection{Context}

\subsubsection{Codes}

A \emph{linear error-correcting code}, or more simply a \emph{code}, over $\F_q$ is an $\F_q$-vector space $C \in \F_q^n$, $n \in \mathbb{N}$. We call $n$ the \emph{length} of $C$ and $\dim_{\F_q}(C)$ its \emph{dimension}. The elements of $C$ are called \emph{words} or \emph{codewords}. The \emph{Hamming weight} $\wt(c)$ of $c \in C$ is the number of non-zero entries of $c$, and the \emph{minimum distance} of $C$ is $d(C) \coloneqq \min \{\wt(c) \, | \, c \in C \setminus \{ 0 \} \}$. A code over $\F_q$ of length $n$, dimension $k$ and minimum distance $d$ is called a $[n,k,d]_q$\emph{--code}.

Let $C$ be a code over $\F_q$ of length $n$, and take $x = (x_1, \dots, x_n)$ and $x'=(x'_1, \dots, x'_n) \in \F_q^n$. With the Euclidean scalar product of $x$ and $x'$ defined as 
\[ x \cdot x' \coloneqq \sum_{i=1}^n x_ix'_i \, ,
\]
we define the \emph{dual} of $C$ to be its orthogonal space
\[C^\perp = \{ x \in \F_q \, | \, \forall c \in C, x \cdot c = 0 \}.
\]
The set $C^\perp$ is a $[n,n-\dim(C),d(C^\perp)]_q$--code with $C = (C^\perp)^\perp$, and $d(C^\perp)$ is called the \emph{dual minimum distance} or \emph{dual distance} of $C$. 

Let $C_1$ and $C_2$ be two codes over $\F_q$, of length $n_1$ and $n_2$ respectively. We define the tensor product $C_1 \otimes C_2$ to be the set of all $n_2 \times n_1$ matrices whose rows are elements of $C_1$ and columns are elements of $C_2$, which is coherent with the tensor product of $C_1$ and $C_2$ as vector-spaces. It is a $[n_1n_2, k_1k_2, d_1d_2]_q$--code. Indeed, by taking $c_1$ and $c_2$ minimally-weighted words of $C_1 \setminus \{0 \}$ and $C_2 \setminus \{0 \}$ respectively, the pure tensor $c_1 \otimes c_2$ has minimum distance $d_1d_2$ and so $d(C_1 \otimes C_2) \leq d_1d_2$. For the lower bound, observe that a non-zero word $c \in C_1 \otimes C_2$ has at least $d_2$ non-zero rows, and each non-zero row has minimum distance $d_1$.

Finally, let $I \subsetneq \llbracket 1,n \rrbracket$ and $C \in \F_q^n$. The \emph{puncturing at} $I$ of $C$ is the image of the projection
\[ \pi_I : \left\{
\begin{array}{c c c}
C &\rightarrow & \F_q^{k} \\
(c_1, \dots, c_n) &\mapsto &(c_{i_1}, \dots, c_{i_k})
\end{array}
\right. \, ,
\]
where $k = n - \# I$ and $\{ i_1, \dots, i_k \} = \llbracket 1,n \rrbracket \setminus I$.

\subsubsection{AG codes on a surface}
Let $\mathcal{S}$ be a smooth projective and geometrically integral surface over $\mathbb{F}_q$. Define $\Delta \subset \mathcal{S}(\F_q)$ a non-empty set of $\F_q$-rational points of $\mathcal{S}$ of cardinality $n$, and $G \in \Div(\mathcal{S})$ such that no point $P \in \Delta$ is in the support of $G$. We define the \emph{Algebraic Geometry code} (\emph{AG code} for short) on $\mathcal{S}$, parametrized by $\Delta$ and $G$, to be the image of the evaluation morphism

\[
\ev_\Delta : \left\{ \begin{array}{c c c} L(G) &\rightarrow &\F_q^{n} \\ f &\mapsto &(f(P))_{P \in \Delta} \end{array} \right. \, .
\]

\begin{remark}\label{remarque chiante} Let $G = G_0 - G_\infty$, where $G_0$ and $G_\infty$ are positive divisors with no common prime components. Having a point $P \in \Delta$ outside of the support of $G$, and of $G_0$ in particular, ensures that the evaluation at $P$ of $f \in L(G) \subset L(G_0)$ is well-defined. One may circumvent this condition on $G$ by choosing for every point $P \in \Delta$ a generator $t_P$ of the local ring $\mathcal{O}(G)_P$, and by evaluating $f_P(P) \in \F_q^n$ for each $f=f_Pt_P \in L(G)$.
\end{remark}

\subsection{Evaluation morphism}

We define here an AG code $C_e(a,b)$, by evaluating each function of the space $\Lis(a S_e + b F_e)$ at every $\F_q$-rational point of $\mathcal{H}_e$.

\begin{notation}\label{nota: P1}
As is the custom, we fix the following set of representatives for the points of $\mathbb{P}^1(\F_q)$:
\[ \mathbb{P}^1(\F_q) = \{ (0,1) \} \cup \{ (1,\alpha) \, | \, \alpha \in \F_q \}  \, . \]
We will call the point $(0,1)$ "the point at infinity".
\end{notation}

According to \cite{nardi2019algebraic}, the $\F_q$-rational points of $\mathcal{H}_e$ correspond to those of the "grid" $\mathbb{P}^1 \times \mathbb{P}^1$. Using our choice of representatives (\ref{nota: P1}), each point $P \in \mathcal{H}_e(\F_q) \simeq \mathbb{P}^1(\F_q) \times \mathbb{P}^1(\F_q)$ has one of the following forms
\begin{itemize}
\item $(1, \alpha, 1, \beta) \; \text{with} \, (\alpha, \beta) \in \F_q^2 $, when $P \in \mathcal{H}_e \setminus (F_e \cup F_e)$;
\item $(1, \alpha, 0, 1) \; \text{with} \, \alpha \in \F_q$, when $P \in F_e \setminus \{ Q_e \}$;
\item $(0,1,1,\beta) \; \text{with} \, \beta \in \F_q$, when $P \in S_e \setminus \{ Q_e \}$;
\item $(0,1,0,1)$, when $P= Q_e$.
\end{itemize}
Note that $\# \mathcal{H}_e (\F_q) = (q+1)^2$.
 
\begin{convention}
For ease of notation, we will use the convention 
\[
0^0 \coloneqq 1 \, ,
\]
so that for all $k \in \mathbb{N}$, $0^k = \left \{ \begin{array}{r} 0 \text{ if } k \neq 0 \\ 1 \text{ if } k = 0 \end{array} \right. \, .$
\end{convention}

\begin{definition}[The code $C_e(a,b)$]\label{def:codehirz}
 The evaluation of a monomial $M = X_1^{d_1}X_2^{d_2}T_1^{c_1}T_2^{c_2}$ at a point $P = (x_1,x_2,t_1,t_2) \in \mathcal{H}_e(\F_q)$ is defined by $M(P) = x_1^{d_1}x_2^{d_2}t_1^{c_1}t_2^{c_2}$. We extend it by linearity to define an evaluation $f(P)$ for all $f \in \Lis(aS_e + bF_e)$ (defined in Proposition \ref{prop:RRHirz}). Let $a,b \in \mathbb{N}$, and let $\ev_{\mathcal{H}_e}$ be the evaluation morphism

\[
\ev_{\mathcal{H}_e}: \left\{
\begin{array}{r c l}
\Lis(aS_e + bF_e) & \rightarrow & \F_q^{(q+1)^2} \\
f & \mapsto & (f(P))_{P \in \mathcal{H}_e(\F_q)}
\end{array}
\right . \, .
\]
The AG code $C_e(a,b)$ defined on $\mathcal{H}_e$ and parametrized by $(a,b) \in \mathbb{N}^2$ is the image
\[
C_e(a,b) \coloneqq \Image(\ev_{\mathcal{H}_e}) \subset \F_q^{(q+1)^2} \, .
\]
\end{definition}

Defining the evaluation morphism on $\Lis(aS_e+bF_e)$ instead of $L(aS_e+bF_e)$ corresponds to our choice of a generator $t_P$ at each point $P$ that is in the support of $aS_e + bF_e$, as discussed in Remark \ref{remarque chiante}. We will see in Section \ref{section:equivalence} that $C_e(a,b)$ is equivalent to every AG code on the points of $\mathcal{H}_e(\F_q)$ parametrized by a divisor $G \simeq aS_e+bF_e$ with no $\F_q$-rational point in its support.

A generating set of the $\F_q$-vector space $C_e(a,b)$ is given by the images of the monomials $M=T_1^{c_1}T_2^{c_2}X_1^{d_1}X_2^{d_2}$ via $\ev_{\mathcal{H}_e}$, which we represent by square matrices with $(q+1)^2$ coordinates. We organize the index of each coordinate to be coherent with our representation of $\mathcal{H}_e$ (see Figure \ref{fig:HirzebruchCode}), with the first column being formed by the evaluations of $M$ at the $\F_q$-rational points of $S_e = \{X_1 = 0 \}$ (all coordinates are zero if and only if $d_1 \neq 0$) and the first row being the ones at $F_e = \{T_1 = 0 \}$ (which are zero if and only if $c_1 \neq 0$).

\begin{figure}[H]
\begin{center}
\begin{tikzpicture}
\begin{scope}[scale=0.65]

\foreach \i in {0,...,2}
	\draw [dashed] (0,\i)--(3,\i);
\foreach \i in {1,...,3}{
	\draw [dashed] (\i ,2)--(\i ,0);
	\draw [dashed] (\i,2) ..controls +(0,0.5) and +(-0,-0.5).. (2,3);
}

\draw (1.5,-1) node {$\mathcal{H}_e(\F_q)$};

\draw (3,3) -- (0,3);

\draw (0,0) -- (0,3);

\foreach \i in {0,...,3}{
	\foreach \j in {0,...,3}
		\draw (\i,\j) node {$\bullet$};
} 

\draw (4,1.5) node {$;$};
\draw (5,1.5) node [right] {$\ev_{\mathcal{H}_e}(M) = \begin{pmatrix}
0^{c_1 + d_1} & 0^{c_1}\alpha_1^{d_2} & \cdots & 0^{c_1}\alpha_q^{d_2}\\
\alpha_1^{c_2}0^{d_1} & \alpha_1^{c_2}\alpha_1^{d_2} & \cdots & \alpha_1^{c_2}\alpha_q^{d_2} \\
\vdots & \vdots &  & \vdots \\
\alpha_q^{c_2}0^{d_1} & \alpha_q^{c_2}\alpha_1^{d_2} & \cdots & \alpha_q^{c_2}\alpha_q^{d_2}
\end{pmatrix}$};
\end{scope}
\end{tikzpicture}
\end{center}
\caption{Codeword associated to the monomial $M=T_1^{c_1}T_2^{c_2}X_1^{d_1}X_2^{d_2}$.}
\label{fig:HirzebruchCode}
\end{figure}

\begin{notation}
Let $a,b \in \mathbb{N}$. Using the basis $(\sigma_e, F_e)$ of $\Pic(\mathcal{H}_e)$, the code $C_e(a,b)$ is parametrized by the divisor $a\sigma_e + (b-ea) F_e \simeq aS_e + bF_e$. We will denote 
\[
C_e(a\sigma_e + (b-ea)F_e) \coloneqq C_e(a,b) \, .
\]
\end{notation}

The construction of the Riemann--Roch space $L(aS_e + bF_e)$ for all integers $a$ and $b$ allows us to give an explicit description of $C_e(a,b)$ using projective Reed--Solomon codes, that we define below.

\begin{definition}[Reed--Solomon codes]
Denote $\F_q = \{ \alpha_1, \dots, \alpha_q \}$. The \emph{projective Reed--Solomon code} of dimension $k \in \llbracket 0, q+1 \rrbracket$ on the alphabet of size $q$, denoted $\PRS_q(k)$, is the image of
\[
\ev_{\mathbb{P}^1(\F_q)} : \left\{
\begin{array}{r c l}
\F_q [X,Y]^h_{k-1} &\rightarrow &\F_q^{q+1} \\
P(X,Y) & \mapsto & (P(0,1), P(1,\alpha_1), \dots, P(1, \alpha_q)) 
\end{array}
\right. .
\]
The canonical basis of $\PRS_q(k)$ is  $((0^{k-1-d}, \alpha_1^d, \dots, \alpha_q^d))_{d \in \llbracket 0, k-1 \rrbracket}$. We extend the definition of $\PRS_q(k)$ for all integers $k \in \mathbb{Z}$ by putting $\PRS_q(k)= \{0 \}$ when $k \leq 0$, and $\PRS_q(k) = \F_q^{q+1}$ when $k \geq q+1$.

The \emph{(affine) Reed--Solomon code} $\RS_q(k)$ is the puncturing of $\PRS_q(k)$ at the first coordinate. Its dimension is $k$ if and only if $k \in \llbracket 0,q \rrbracket$. In that case, the canonical basis of $\RS_q(k)$ is the family $(\alpha^{*d})_{d \in \llbracket 0, k-1 \rrbracket}$, with $\alpha^{*d} \coloneqq (\alpha_1^d, \dots, \alpha_q^d)$.
\end{definition}

\begin{proposition}[{\cite[2.3]{Stic09}}]
Let $k \in \llbracket 0,q \rrbracket$. The minimum distance of $\PRS_q(k)$ and $\RS_q(k)$ are, respectively,
\begin{align*}
d(\PRS_q(k)) &= q+2-k \, ,\\
d(\RS_q(k)) &= q+1 - k \, .
\end{align*}

Their duals are respectively given by
\begin{align*}
\PRS_q(k)^\perp &= \PRS_q(q+1-k) \, ,\\
\RS_q(k)^\perp &= \RS_q(q-k) \, .
\end{align*}

\end{proposition}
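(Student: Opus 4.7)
The plan is to derive all four claims from two classical ingredients: the bound on the number of projective roots of a non-zero homogeneous form in two variables, and the character-sum identity
\[
\sum_{\alpha \in \F_q} \alpha^{j} \;=\; \begin{cases} -1 & \text{if } j > 0 \text{ and } (q-1) \mid j, \\ 0 & \text{otherwise,} \end{cases}
\]
where the sum is taken with the convention $0^0 = 1$ fixed earlier in the paper (so that $\alpha = 0$ contributes $1$ to the $j=0$ term and $0$ elsewhere).

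For the minimum distances, I would argue that a non-zero $P \in \F_q[X,Y]^h_{k-1}$ has at most $k-1$ projective zeros, hence $\ev_{\mathbb{P}^1(\F_q)}(P)$ has at most $k-1$ vanishing coordinates among the $q+1$, giving weight at least $q+2-k$. Taking $P$ to be a product of $k-1$ distinct linear factors among the $q+1$ available realises this bound. The affine version proceeds identically on $q$ evaluation points and yields $d(\RS_q(k)) = q+1-k$.

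For the duality $\RS_q(k)^{\perp} = \RS_q(q-k)$, I would compute scalar products between canonical basis vectors: for $d \in \llbracket 0, k-1\rrbracket$ and $d' \in \llbracket 0, q-k-1 \rrbracket$,
\[
\alpha^{*d} \cdot \alpha^{*d'} = \sum_{\alpha \in \F_q}\alpha^{d+d'} = 0,
\]
since $d+d' \leq q-2 < q-1$. This gives the inclusion $\RS_q(q-k) \subset \RS_q(k)^{\perp}$, and the dimension count $\dim \RS_q(k)^{\perp} = q - k$ forces equality. For the projective case, writing the canonical basis vectors as $c_d = (0^{k-1-d},\alpha_1^d,\dots,\alpha_q^d)$ and $c'_{d'} = (0^{q-k-d'},\alpha_1^{d'},\dots,\alpha_q^{d'})$ for $d \in \llbracket 0, k-1 \rrbracket$ and $d' \in \llbracket 0, q-k \rrbracket$, I would expand
\[
c_d \cdot c'_{d'} = 0^{k-1-d} \cdot 0^{q-k-d'} + \sum_{\alpha \in \F_q}\alpha^{d+d'},
\]
and exploit that $d+d' \leq q-1$: when $d+d' < q-1$ both summands vanish; when $d+d' = q-1$ one necessarily has $d = k-1$ and $d' = q-k$, so the first summand equals $1$ and the character sum equals $-1$, giving a perfect cancellation.

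The main subtlety lies precisely in this cancellation at $d+d'=q-1$ in the projective case, which hinges on aligning the $0^0 = 1$ convention at the point at infinity with the $-1$ produced by the sum over $\F_q^*$. Once this is handled, both dualities follow from the dimension identities $\dim \PRS_q(k)^{\perp} = q+1-k = \dim \PRS_q(q+1-k)$ and $\dim \RS_q(k)^{\perp} = q-k = \dim \RS_q(q-k)$.
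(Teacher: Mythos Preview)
The paper does not prove this proposition; it is stated with a citation to Stichtenoth's textbook and no argument is given. Your self-contained proof is correct: the root-counting bound for non-zero homogeneous forms of degree $k-1$ yields the minimum distances (with equality realised by split products of distinct linear factors), and the character-sum computation together with the dimension count establishes both dualities. The one delicate step, the cancellation $0^0 + (-1) = 0$ at the unique pair $(d,d') = (k-1,\,q-k)$ in the projective case, is handled correctly under the paper's convention $0^0 = 1$.
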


We can now describe the codes $C_e(a,b)$ using sums of tensor-products of codes.

\begin{proposition}\label{prop: Code Hirzebruch}
Let $a,b \in \mathbb{N}$, and let $\F_q = \{ \alpha_1, \dots, \alpha_q \}$. The evaluation code $C_e(a,b)$ on the $\F_q$-rational points of the Hirzebruch Surface $\mathcal{H}_e$ is given by
\[
C_e(a,b) = \sum_{d_1 = 0}^a \left[ \langle ( 0^{d_1}, \alpha_1^{a-d_1}, \dots, \alpha_q^{a-d_1} ) \rangle \otimes  \PRS_q(b-ea+ed_1+1) \right] \, .
\]
\end{proposition}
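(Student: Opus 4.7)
The plan is to transport the decomposition of the Riemann--Roch space $\Lis(aS_e+bF_e)$ from Proposition \ref{prop:RRHirz} through the evaluation morphism, using the fact that $\ev_{\mathcal{H}_e}$ respects the tensor structure coming from the product $\mathcal{H}_e(\F_q)\simeq\mathbb{P}^1(\F_q)\times\mathbb{P}^1(\F_q)$.

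First I would make the tensor identification explicit. Order the rational points so that a vector in $\F_q^{(q+1)^2}$ is read as a $(q+1)\times(q+1)$ matrix indexed by pairs in $\mathbb{P}^1(\F_q)\times\mathbb{P}^1(\F_q)$, as in Figure \ref{fig:HirzebruchCode}; this is exactly the isomorphism $\F_q^{(q+1)^2}\simeq \F_q^{q+1}\otimes\F_q^{q+1}$. For any monomial $M=X_1^{d_1}X_2^{d_2}T_1^{c_1}T_2^{c_2}$, the evaluation at a point $(x_1,x_2,t_1,t_2)\in\mathcal{H}_e(\F_q)$ factorises as $(x_1^{d_1}x_2^{d_2})\cdot(t_1^{c_1}t_2^{c_2})$, so the full evaluation matrix $\ev_{\mathcal{H}_e}(M)$ equals the outer product
\[
\ev_{\mathbb{P}^1}(X_1^{d_1}X_2^{d_2})\otimes \ev_{\mathbb{P}^1}(T_1^{c_1}T_2^{c_2}),
\]
where $\ev_{\mathbb{P}^1}$ is the evaluation on the representatives of $\mathbb{P}^1(\F_q)$ fixed in Notation \ref{nota: P1}. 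Using the convention $0^0=1$, one has
\[
\ev_{\mathbb{P}^1}(X_1^{d_1}X_2^{a-d_1}) = (0^{d_1},\alpha_1^{a-d_1},\dots,\alpha_q^{a-d_1})
\]
and similarly $\ev_{\mathbb{P}^1}(T_1^{c_1}T_2^{c_2})=(0^{c_1},\alpha_1^{c_2},\dots,\alpha_q^{c_2})$.

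Next I would apply $\ev_{\mathcal{H}_e}$ term-by-term to the explicit decomposition $(L_{a,b})$. Fix $d_1\in\llbracket 0,a\rrbracket$. The $X$-factor $\langle X_1^{d_1}X_2^{a-d_1}\rangle$ is one-dimensional, and its image is the line spanned by $(0^{d_1},\alpha_1^{a-d_1},\dots,\alpha_q^{a-d_1})$. The $T$-factor $\F_q[T_1,T_2]^h_{b-ea+ed_1}$ (when non-zero, i.e.\ $b-ea+ed_1\geq 0$) maps under $\ev_{\mathbb{P}^1}$ to $\PRS_q(b-ea+ed_1+1)$ by definition of the projective Reed--Solomon code; and when $b-ea+ed_1<0$ both sides are zero by our convention on $\PRS_q$. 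Combining this with the tensor-product compatibility above, the image under $\ev_{\mathcal{H}_e}$ of the summand $\langle X_1^{d_1}X_2^{a-d_1}\rangle\otimes\F_q[T_1,T_2]^h_{b-ea+ed_1}$ is exactly
\[
\langle (0^{d_1},\alpha_1^{a-d_1},\dots,\alpha_q^{a-d_1})\rangle \otimes \PRS_q(b-ea+ed_1+1).
\]

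Finally, since $\ev_{\mathcal{H}_e}$ is $\F_q$-linear, the image of a sum of subspaces is the sum of their images. Summing over $d_1\in\llbracket 0,a\rrbracket$ and using Proposition \ref{prop:RRHirz} yields the announced equality. The only delicate point in the argument is bookkeeping with the convention $0^0=1$, which is what makes the formulas uniform across the cases $d_1=0$ and $d_1=a$ (and, for the $T$-part, across $c_1=0$ and $c_2=0$); everything else reduces to the elementary fact that evaluation on a product of projective lines factors as a tensor product of evaluations on each factor.
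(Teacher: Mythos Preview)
Your proof is correct and follows essentially the same approach as the paper: both transport the decomposition \eqref{RRHirz} of $\Lis(aS_e+bF_e)$ through $\ev_{\mathcal{H}_e}$ by exploiting the tensor factorisation $\ev_{\mathcal{H}_e}(M)=\ev_{\mathbb{P}^1}(X_1^{d_1}X_2^{a-d_1})\otimes\ev_{\mathbb{P}^1}(\widetilde f)$ on each summand, then identify the second factor with $\PRS_q(b-ea+ed_1+1)$ by definition. The only cosmetic difference is that the paper verifies the two inclusions of the image explicitly, while you invoke the general fact that the image of a tensor product of subspaces under a tensor product of linear maps is the tensor product of the images.
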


\begin{proof}
We prove it by "evaluating" the set $\Lis(aS_e+bF_e)$ of Equation \eqref{RRHirz} at every $\F_q$-rational point of $\mathcal{H}_e$. Fix $d_1 \in \llbracket 0, a \rrbracket$ and let $f = X_1^{d_1}X_2^{a-d_1} \widetilde{f}(T_1,T_2) \in \langle ( X_1^{d_1} X_2^{a-d_1} )  \rangle \otimes \F_q[T_1,T_2]^h_{b-ea+ed_1}$. Evaluating $f$ at each $\F_q$-rational point of $\mathcal{H}_e$ gives the following codeword
\begin{align*}
 \ev_{\mathcal{H}_e}(f) &= (x_1^{d_1}x_2^{a-d_1}\widetilde{f}(t_1,t_2))_{(t_1:t_2)\times(x_1:x_2) \in \mathcal{H}_e} \\
			 &=  \ev_{\mathbb{P}^1(\F_q)}(X_1^{d_1}X_2^{a-d_1}) \otimes \ev_{\mathbb{P}^1(\F_q)}(\widetilde{f}(T_1,T_2)) \, .
\end{align*}
Since we chose the system of representatives $\{(0,1)\} \cup \{ (1,\alpha) \, | \, \alpha \in \F_q \}$ for the elements of $\mathbb{P}^1 (\F_q)$, and since $\widetilde{f}$ is homogenuous of degree $b-ea+ed_1$, we get
\[
\ev_{\mathcal{H}_e}(f) \in  \langle ( 0^{d_1}, \alpha_1^{a-d_1}, \dots, \alpha_q^{a-d_1} ) \rangle \otimes  \PRS_q(b-ea+ed_1+1) \, .
\]
Conversely, for every element $c = \ev_{\mathbb{P}^1(\F_q)}(P) \in  \PRS_q(b-ea+ed_1+1)$ where $P$ is an homogenuous polynomial of degree $b-ea+ed_1$, we have
\[
\langle ( 0^{d_1}, \alpha_1^{a-d_1}, \dots, \alpha_q^{a-d_1} ) \rangle \otimes c = \ev_{\mathcal{H}_e} (X_1^{d_1}X_2^{a-d_1} P(T_1,T_2)) \in C_e(a,b) \, .
\] 
We conclude by summing the spaces $ \langle ( 0^{d_1}, \alpha_1^{a-d_1}, \dots, \alpha_q^{a-d_1} ) \rangle \otimes  \PRS_q(b-ea+ed_1+1)$ over all integers $d_1 \in \llbracket 0,a \rrbracket$.
\end{proof}

Despite having an explicit form for $C_e(a,b)$, studying its dual is rendered difficult by the fact that for two integers $0 \leq k_1 \leq k_2 \leq q$, $\PRS_q(k_1)$ is not a subset of $\PRS_q(k_2)$ in general. As a consequence, sums and intersections between projective Reed--Solomon codes do not behave well.

\begin{notation}
Let $C$ be a code over $\F_q$ of length $n$. We denote by $(0,C)$ the concatenation of length $n+1$
\[
(0,C) = \{ (0,c_1,\dots,c_n) \mid (c_1, \dots, c_n) \in C \} \, .
\]
\end{notation}

\begin{lemma}\label{lem:prs}
Let $k_1$ and $k_2$ be two integers such that $1 \leq k_1 < k_2 \leq q$. Then we have
\begin{equation}\label{eq:prs1}
\PRS_q(k_1) \cap \PRS_q(k_2) = (0, \RS_q(k_1 - 1)) \, ,
\end{equation}
and
\begin{equation}\label{eq:prs2}
\PRS_q(k_1) + \PRS_q(k_2) = \langle (1, 0, \dots, 0) \rangle + (0, \RS_q(k_2)) \, .
\end{equation}
\end{lemma}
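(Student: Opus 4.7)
The plan is to exploit the canonical direct-sum decomposition of a projective Reed--Solomon code that is implicit in its definition: grouping the basis vectors of $\PRS_q(k)$ according to whether the first coordinate is $0$ or $1$ gives, for $1\leq k \leq q$,
\[
\PRS_q(k) \;=\; (0,\RS_q(k-1)) \;\oplus\; \langle (1,\alpha^{*(k-1)}) \rangle.
\]
Both identities of the lemma then reduce to linear algebra in $\F_q^{q+1}$, using that the vectors $\alpha^{*0},\alpha^{*1},\ldots,\alpha^{*(q-1)}$ are linearly independent by Vandermonde.

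For \eqref{eq:prs1}, the inclusion $(0,\RS_q(k_1-1)) \subset \PRS_q(k_1) \cap \PRS_q(k_2)$ is immediate from the decomposition above, since $\RS_q(k_1-1) \subset \RS_q(k_2-1)$. For the reverse inclusion, I would take $c$ in the intersection and write it in both ways,
\[
c \;=\; \lambda_j\,(1,\alpha^{*(k_j-1)}) + (0,v_j), \qquad v_j \in \RS_q(k_j-1),\; j=1,2.
\]
Comparing first coordinates gives $\lambda_1 = \lambda_2 =: \lambda$, and comparing the remaining $q$ coordinates yields
\[
\lambda\bigl(\alpha^{*(k_1-1)} - \alpha^{*(k_2-1)}\bigr) \;=\; v_2 - v_1 \;\in\; \RS_q(k_2-1).
\]
Since $k_1-1 \leq k_2-2$ the vector $\alpha^{*(k_1-1)}$ lies in $\RS_q(k_2-1)$, whereas $\alpha^{*(k_2-1)}$ does not (Vandermonde). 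This forces $\lambda = 0$, hence $v_2 = v_1 \in \RS_q(k_1-1)$ and $c \in (0,\RS_q(k_1-1))$.

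For \eqref{eq:prs2}, the cleanest route is a dimension argument plus one inclusion. The Grassmann formula together with \eqref{eq:prs1} yields
\[
\dim\bigl(\PRS_q(k_1) + \PRS_q(k_2)\bigr) \;=\; k_1 + k_2 - (k_1-1) \;=\; k_2 + 1,
\]
and the right-hand side of \eqref{eq:prs2} has dimension $1 + k_2 = k_2+1$ since its two summands intersect trivially (first-coordinate obstruction). It then suffices to exhibit the generators of the right-hand side inside the left. The element $(1,0,\ldots,0) = (1,\alpha^{*(k_1-1)}) - (0,\alpha^{*(k_1-1)})$ lies in $\PRS_q(k_1) + \PRS_q(k_2)$, because $(0,\alpha^{*(k_1-1)}) \in \PRS_q(k_2)$ (as $k_1-1 \leq k_2-2$); and $(0,\alpha^{*(k_2-1)}) = (1,\alpha^{*(k_2-1)}) - (1,0,\ldots,0)$ then lies in the same sum, so that $(0,\RS_q(k_2)) = (0,\RS_q(k_2-1)) + \langle (0,\alpha^{*(k_2-1)})\rangle$ is contained in it.

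The only real point of care is bookkeeping around the ranges of the $\alpha^{*d}$ and the subtle difference between $\RS_q(k_2-1)$ and $\RS_q(k_2)$: the entire argument hinges on $\alpha^{*(k_2-1)}\notin \RS_q(k_2-1)$, which requires $k_2-1 \leq q-1$, i.e.\ $k_2 \leq q$, matching the hypothesis of the lemma.
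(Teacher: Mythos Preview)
Your proof is correct. For \eqref{eq:prs1} your argument is essentially the same as the paper's: both use the decomposition $\PRS_q(k) = (0,\RS_q(k-1)) \oplus \langle (1,\alpha^{*(k-1)})\rangle$ and then rule out the extra direction. Where you invoke Vandermonde to show $\alpha^{*(k_2-1)} \notin \RS_q(k_2-1)$, the paper phrases the same obstruction via the minimum distance: since $d(\PRS_q(k_2)) \geq 2$, the vector $(1,0,\ldots,0)$ cannot lie in $\PRS_q(k_2)$.

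The genuine difference is in \eqref{eq:prs2}. You proceed directly, combining the Grassmann formula with \eqref{eq:prs1} and then exhibiting the generators of the right-hand side inside the sum. The paper instead derives \eqref{eq:prs2} from \eqref{eq:prs1} by duality: replacing $(k_1,k_2)$ by $(q+1-k_2,\,q+1-k_1)$ in \eqref{eq:prs1} and applying $\PRS_q(k)^\perp = \PRS_q(q+1-k)$ together with $(A\cap B)^\perp = A^\perp + B^\perp$ yields \eqref{eq:prs2} in one line. Your route is more self-contained and avoids appealing to the known duality of projective Reed--Solomon codes; the paper's route is shorter and highlights that the two identities are formally dual to one another.
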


\begin{proof}
Let $\F_q = \{ \alpha_1, \dots, \alpha_q \}$. For all $k \in \mathbb{N}^*$, $\PRS_q(k) = \langle (1, \alpha_1^{k -1}, \dots, \alpha_q^{k - 1}) \rangle + (0, \RS_q(k-1))$, and $\RS_q(k_1) \subset \RS_q(k_2)$, so
\[ (0,\RS_q(k_1 -1)) \subset \PRS_q(k_1) \cap \PRS_q(k_2) \, .
\]
Since $\dim (\RS_q(k_1 - 1)) = \dim( \PRS_q(k_1) ) -1$, we now need to show that $(1, \alpha_1^{k_1 -1}, \dots, \alpha_q^{k_1 - 1}) \notin \PRS_q(k_2)$. Since $k_1 < k_2 \leq q$, the minimum distance of $\PRS_q(k_2)$ is at least $2$ and $(0, \alpha_1^{k_1 - 1}, \dots, \alpha_q^{k_1-1}) \in \PRS_q(k_2)$. Thus
\[
(1, \alpha_1^{k_1 -1}, \dots, \alpha_q^{k_1 - 1}) = (1,0, \dots,0) + (0, \alpha_1^{k_1 - 1}, \dots, \alpha_q^{k_1-1}) \notin \PRS_q(k_2) \, ,
\]
and we get Equation \eqref{eq:prs1}. Replacing with $1 \leq q + 1 -k_2 < q + 1 -k_1 \leq q$ and taking the duals, we obtain Equation \eqref{eq:prs2}
\end{proof}

\subsection{Parameters of $C_e(a,b)$}

We recall here the parameters of the code $C_e(a,b)$ given in \cite{nardi2019algebraic}. We left out some pathological cases, namely $b-ea < 0$ and $a \geq q$, since those do not give particularly interesting codes.

\begin{proposition}[Parameters of $C_e(a,b)$]\label{prop:paramcode}
We fix $e \geq 2$. Let $(a,b) \in \mathbb{N}^2$ such that $0 \leq a \leq q-1$ and $0 \leq b-ea$, and consider the $\left[(q+1)^2, k_e(a,b), d_e(a,b)\right]_q$-code $C_e(a,b)$. 
We set
\[
s \coloneqq \frac{b-q}{e} \, \text{ and } \,\widetilde{s} \coloneqq \left\{ 
\begin{array}{l} 
\min \{ \lfloor s \rfloor , a \} \text{ if } s \geq 0 \\
-1 \text{ otherwise}
\end{array}
\right. \, .
\]
Then, the dimension of $C_e(a,b)$ is
\[
k_e(a,b) = (\widetilde{s}+1)(q+1) + (a - \widetilde{s}) \left( b + 1 - e \left( \frac{a + \widetilde{s}+1}{2} \right) \right) \, .
\]
Its minimum distance $d_e(a,b)$ is given as follow:
\begin{itemize}
\item if $q > b$, then
\[
d_e(a,b) = (q + \mathds{1}_{a = 0} )(q - b + 1) \, ;
\]
\item if $ b-ea < q \leq b$, then
\[
d_e(a,b) = q - \left\lfloor \frac{b-q}{e} \right\rfloor \, ;
\]
\item and if $q \leq b-ea$, then,
\[
d_e(a,b) = q - a + 1 \, .
\]
\end{itemize}
\end{proposition}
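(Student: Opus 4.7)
I would split the proof into a dimension computation and a minimum distance computation, using the explicit form of Proposition \ref{prop: Code Hirzebruch} for the former, and combining it with the combinatorial estimates of \cite{nardi2019algebraic} for the latter. Throughout, I rely on the linear equivalence $aS_e + bF_e \simeq a\sigma_e + (b - ea) F_e$ to translate Nardi's parametrization to ours.

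For the dimension, I first observe that the vectors $v_{d_1} := (0^{d_1}, \alpha_1^{a-d_1}, \dots, \alpha_q^{a-d_1})$ for $d_1 \in \llbracket 0, a\rrbracket$ are exactly the canonical basis of $\PRS_q(a+1)$. Since $a \leq q-1$, they are linearly independent, hence the sum in Proposition \ref{prop: Code Hirzebruch} is direct and
\[
\dim C_e(a,b) \;=\; \sum_{d_1 = 0}^{a} \min\bigl(b-ea+ed_1+1,\; q+1\bigr).
\]
The threshold $b-ea+ed_1+1 \geq q+1$ is reached precisely when $d_1 \geq a - \widetilde{s}$, so the three cases to consider correspond to $\widetilde{s} = -1$, $\widetilde{s} \in \llbracket 0, a-1\rrbracket$, or $\widetilde{s} = a$; summing the arithmetic progression together with the constant $(q+1)$ terms and rearranging yields the announced formula.

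For the minimum distance, I exhibit in each case an explicit codeword of the stated weight coming from a well-chosen sub-sum of tensor summands. When $q \leq b-ea$, every $\PRS_q(b-ea+ed_1+1)$ equals $\F_q^{q+1}$, so $C_e(a,b) = \PRS_q(a+1) \otimes \F_q^{q+1}$ is an honest tensor product with minimum distance $d(\PRS_q(a+1)) = q-a+1$. When $b-ea < q \leq b$, the sub-sum indexed by $d_1 \geq a - \widetilde{s}$ equals $V' \otimes \F_q^{q+1}$ with $V' := \langle v_{a - \widetilde{s}}, \dots, v_a\rangle$; since all these $v_{d_1}$ vanish at the first coordinate (as $a - \widetilde{s} \geq 1$), projecting this coordinate away identifies $V'$ with $\RS_q(\widetilde{s}+1)$ and produces a codeword of weight $q - \widetilde{s}$. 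Finally, when $q > b$, the summand $\langle v_a\rangle \otimes \PRS_q(b+1)$ yields a codeword of weight $\wt(v_a) \cdot d(\PRS_q(b+1)) = (q + \mathds{1}_{a=0})(q-b+1)$, since $v_a = (0^a, 1, \dots, 1)$ has Hamming weight $q + \mathds{1}_{a=0}$ and $\PRS_q(b+1)$ is MDS.

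The matching lower bounds amount to controlling the number of $\F_q$-rational zeros of a non-zero $f \in \Lis(aS_e + bF_e)$, viewed as a polynomial of bidegree $(a, b-ea)$ on the grid $\mathbb{P}^1(\F_q) \times \mathbb{P}^1(\F_q)$ with Newton polytope $\mathcal{P}_{a,b}$. This is exactly the estimate established in \cite{nardi2019algebraic} by combinatorial arguments on $\mathcal{P}_{a,b}$ together with Gröbner basis techniques on the toric ideal of $\mathcal{H}_e$; translating through the equivalence $aS_e + bF_e \simeq a\sigma_e + (b - ea)F_e$ produces the lower bounds stated in the proposition. The hardest case is the middle one, $b-ea < q \leq b$, where $\mathcal{P}_{a,b}$ interpolates between a trapezoid and a rectangle and the minimum distance genuinely depends on the floor function $\lfloor (b-q)/e\rfloor$.
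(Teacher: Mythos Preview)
Your proposal is correct and follows essentially the same route as the paper: the dimension is computed from the explicit description of $C_e(a,b)$ as a direct sum of one-dimensional-times-$\PRS$ pieces (using linear independence of the canonical basis of $\PRS_q(a+1)$ since $a\le q-1$), and the minimum distance is deferred to \cite{nardi2019algebraic}. The only difference is that you spell out explicit codewords realizing the upper bounds in each of the three regimes, whereas the paper simply cites \cite[4.2.3]{nardi2019algebraic} for the entire minimum distance computation.
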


\begin{proof}
Benefiting from the conditions $a \leq q-1$ and $b-ea \geq 0$, we offer here a new proof for the dimension $k_e(a,b)$ of $C_e(a,b)$ using Proposition \ref{prop: Code Hirzebruch}. 

Let $\F_q = \{ \alpha_1, \dots, \alpha_q \}$, and $\alpha^{*k} \coloneqq (\alpha_1^k, \dots, \alpha_q^k) \in \F_q^q$ for all integer $k \in \mathbb{N}$. Since $a \leq q-1 < q$, the family of vectors $(0^{a-d}, \alpha^{*d}) = (0^{a-d}, \alpha_1^d, \dots, \alpha_q^d)$, $d \in \llbracket 0,a \rrbracket$, is a basis of $\PRS_q(a+1)$ and thus linearly independant. Recall that $\forall k \in \llbracket 0,q+1 \rrbracket, \, \dim (\PRS_q(k)) = k$ and $k' \geq q+1 \Leftrightarrow \PRS_q(k') = \F_q^{q+1}$. Moreover, for all $d \in \llbracket 0,a \rrbracket$, $b-ed+1 \geq q+1 \Leftrightarrow d \leq \lfloor s \rfloor$. From Proposition \ref{prop: Code Hirzebruch}, we thus express $C_e(a,b)$ as a direct sum, with the change of variables $d \coloneqq d_2 = a-d_1$:
\[
C_e(a,b) = \left( (0, \RS_q(\widetilde{s}+1)) \otimes \F_q^{q+1} \right) \oplus \bigoplus^a_{d=\widetilde{s}+1} \left[ \langle (0^{a-d}, \alpha^{*d}) \rangle \otimes \PRS(b- ed +1) \right] \, .
\]
The dimension $k_e(a,b)$ of $C_e(a,b)$ follows
\begin{align*}
k_e(a,b) &= (\widetilde{s}+1)(q+1) + \left( \sum_{d = \widetilde{s}+1}^a b - ed + 1  \right) \\
& = (\widetilde{s}+1)(q+1) + (a - \widetilde{s}) \left( b + 1 - e \left( \frac{a + \widetilde{s}+1}{2} \right) \right) \, .
\end{align*}

The minimum distance $d_e(a,b)$ is computed in  {\cite[4.2.3]{nardi2019algebraic}}.
\end{proof}

\begin{corollary}[Kernel of the evaluation morphism]\label{coro}
 With the same notations as in Proposition \ref{prop:paramcode}, consider the evaluation morphism (described in Definition \ref{def:codehirz})

\[
\ev_{\mathcal{H}_e}: \left\{
\begin{array}{r c l}
\Lis(aS_e + bF_e) & \rightarrow & C_e(a,b) \\
f & \mapsto & (f(P))_{P \in \mathcal{H}_e(\F_q)}
\end{array}
\right . \, .
\]
Then, the kernel of $\ev_{\mathcal{H}_e}$ has dimension $(\widetilde{s}+1)\left(b-q-\frac{e\widetilde{s}}{2} \right)$.
\end{corollary}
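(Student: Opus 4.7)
The plan is to invoke the rank--nullity theorem. By construction $C_e(a,b) = \Image(\ev_{\mathcal{H}_e})$, so
\[
\dim_{\F_q} \ker(\ev_{\mathcal{H}_e}) = \dim_{\F_q} \Lis(aS_e+bF_e) - k_e(a,b),
\]
and Proposition \ref{prop:paramcode} already supplies $k_e(a,b)$. The task reduces to computing $\dim_{\F_q} \Lis(aS_e+bF_e)$ and then simplifying the difference.

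For the first quantity, I would read the dimension directly off the explicit monomial description in Equation \eqref{RRHirz}: the summands $\langle X_1^{d_1} X_2^{a-d_1}\rangle \otimes \F_q[T_1,T_2]^h_{b-ea+ed_1}$ sit inside $\F_q[X_1,X_2,T_1,T_2]$ and are supported on disjoint sets of monomials (they involve distinct $X_1^{d_1}X_2^{a-d_1}$), so the sum is direct. Each piece has dimension $b-ea+ed_1+1$, hence
\[
\dim_{\F_q}\Lis(aS_e+bF_e) = \sum_{d_1=0}^{a} (b-ea+ed_1+1) = \sum_{d=0}^{a} (b-ed+1),
\]
after the change of index $d = a-d_1$ that matches the indexing used in the proof of Proposition \ref{prop:paramcode}.

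Next, I would split this sum at the threshold $d=\widetilde{s}$, mirroring the decomposition of $C_e(a,b)$ used to derive $k_e(a,b)$. The ``high-$d$'' part $\sum_{d=\widetilde{s}+1}^{a}(b-ed+1)$ is an arithmetic progression that simplifies at once to $(a-\widetilde{s})\bigl(b+1-e(a+\widetilde{s}+1)/2\bigr)$, which is precisely the second term of the formula for $k_e(a,b)$. Similarly, the ``low-$d$'' part $\sum_{d=0}^{\widetilde{s}}(b-ed+1)$ simplifies to $(\widetilde{s}+1)\bigl(b+1-e\widetilde{s}/2\bigr)$. Subtracting $k_e(a,b)$ from $\dim_{\F_q}\Lis(aS_e+bF_e)$, the high-$d$ contribution cancels the matching term of $k_e(a,b)$, while the $(\widetilde{s}+1)(q+1)$ term pairs with the low-$d$ part to yield
\[
\dim_{\F_q}\ker(\ev_{\mathcal{H}_e}) = (\widetilde{s}+1)\Bigl(b+1-\frac{e\widetilde{s}}{2}-(q+1)\Bigr) = (\widetilde{s}+1)\Bigl(b-q-\frac{e\widetilde{s}}{2}\Bigr),
\]
as claimed.

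The only point requiring a brief sanity check is the boundary convention $\widetilde{s}=-1$ (which occurs exactly when $b<q$): there the low-$d$ sum is empty and the factor $(\widetilde{s}+1)$ is zero, so both sides vanish and the formula remains valid. There is no substantial obstacle here; the corollary is essentially a bookkeeping consequence of the explicit direct-sum form of $\Lis(aS_e+bF_e)$ combined with the dimension formula already established for $C_e(a,b)$.
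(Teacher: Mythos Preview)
Your proof is correct and follows essentially the same route as the paper: both apply rank--nullity, read off $\dim \Lis(aS_e+bF_e)=\sum_{d=0}^a(b-ed+1)$ from the direct-sum description \eqref{RRHirz}, and subtract the formula for $k_e(a,b)$. The only cosmetic difference is that the paper writes the resulting difference directly as $\sum_{d=0}^{\widetilde{s}}(b-ed-q)$ before summing the arithmetic progression, whereas you split the full sum at $\widetilde{s}$ and cancel the matching pieces; the computations are equivalent.
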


\begin{proof}
Recall the explicit form of Equation \eqref{RRHirz} from Proposition \ref{prop:RRHirz}:
\[
\Lis(aS_e + bF_e) = \sum_{d = 0}^a \left[ \langle ( X_1^{a-d} X_2^{d} ) \rangle \otimes  \F_q[T_1,T_2]^h_{b-ed} \right] \, .
\]
Denote by $\ell(aS_e+bF_e)$ the dimension of $\Lis(aS_e + bF_e)$, then
$\ell(aS_e+bF_e) = \sum_{d=0}^a (b-ed+1)$. In our proof of Proposition \ref{prop:paramcode} for the dimension $k_e(a,b)$ of $C_e(a,b)$, we saw that $k_e(a,b) = (\widetilde{s}+1)(q+1) + \left( \sum_{d = \widetilde{s}+1}^a b - ed + 1  \right)$. We can now easily compute the dimension of the kernel
\begin{align*}
\dim( \ker (\ev_{\mathcal{H}_e})) &= \ell(aS_e + bF_e) - k_e(a,b) \\
& = \sum_{d=0}^{\widetilde{s}} b - ed -q \\
&= (\widetilde{s}+1)\left(b-q-\frac{e\widetilde{s}}{2} \right) \, .
\end{align*}

\end{proof}

\subsection{Dual distance of $C_e(a,b)$}

We now want to estimate the parameters of the $[n,k_e(a,b)^\perp,d_e(a,b)^\perp]_q$-code $C_e(a,b)^\perp$, within the hypotheses of Proposition \ref{prop:paramcode}. Since we already know that $n = (q+1)^2$ and $k_e(a,b)^\perp = (q+1)^2 - k_e(a,b)$, this subsection aims to give a lower bound for $d_e(a,b)^\perp$. To this end, we first introduce the notion of check-product of codes.

\subsubsection{Check-product of codes}

\begin{definition}[Check-product]\label{def:checkprod}
Let $k$ be a field, and let $E \subset k^{n_1}$ and $F \subset k^{n_1}$ be two $k$-vector spaces. The \emph{check-product} of $E$ and $F$, denoted $E \boxplus_k F$, is the $k$-vector space
\[
E \boxplus_k F \coloneqq E \otimes k^{n_2} + k^{n_1} \otimes F \, .
\]
\end{definition}

The main use of this notation is to express the duals of tensor-products of codes:

\begin{proposition}\label{prop:checkproduct}
Let $C_1 \subset \F_q^{n_1}$ and $C_2 \subset \F_q^{n_2}$ be two codes on $\F_q$. Then
\begin{equation}\label{eq:dualtensor}
(C_1 \otimes C_2)^\perp = C_1^\perp \boxplus_{\F_q} C_2^\perp \, .
\end{equation}
Let $d_1^\perp$ and $d_2^\perp$ be the minimum distance of $C_1^\perp$ and $C_2^\perp$, respectively. Then the dual distance $d^\perp$ of $C_1 \otimes C_2$ is
\begin{equation}\label{eq:distdualtensor}
d^\perp = \min \{ d_1^\perp, d_2^\perp \} \, .
\end{equation} 
\end{proposition}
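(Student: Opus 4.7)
The plan is to prove the two equations separately.

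For Equation (\ref{eq:dualtensor}), I would proceed by duality. First, I establish the two base identities
\[
(C_1 \otimes \F_q^{n_2})^\perp = C_1^\perp \otimes \F_q^{n_2}, \qquad (\F_q^{n_1} \otimes C_2)^\perp = \F_q^{n_1} \otimes C_2^\perp,
\]
which follow from the fact that the inner product on $\F_q^{n_1} \otimes \F_q^{n_2}$ factors on pure tensors as $\langle c \otimes v, c' \otimes v' \rangle = (c \cdot c')(v \cdot v')$, so orthogonality to every element of $C_1 \otimes \F_q^{n_2}$ is equivalent to each ``column'' lying in $C_1^\perp$. Second, I show
\[
C_1 \otimes C_2 = (C_1 \otimes \F_q^{n_2}) \cap (\F_q^{n_1} \otimes C_2) :
\]
the inclusion $\subseteq$ is immediate, while the reverse is obtained by expanding an element of the intersection in a basis of $\F_q^{n_1}$ adapted to $C_1$ and a basis of $\F_q^{n_2}$ adapted to $C_2$, and observing that the two membership conditions force all coefficients outside $C_1 \otimes C_2$ to vanish. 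Combining this with the elementary identity $(U \cap V)^\perp = U^\perp + V^\perp$ gives Equation (\ref{eq:dualtensor}).

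For Equation (\ref{eq:distdualtensor}), the upper bound $d^\perp \leq \min\{d_1^\perp, d_2^\perp\}$ is obtained by exhibiting explicit codewords: a minimum-weight $c^* \in C_1^\perp$ gives $c^* \otimes e_1 \in C_1^\perp \otimes \F_q^{n_2} \subseteq (C_1 \otimes C_2)^\perp$ of weight $d_1^\perp$, and symmetrically for $d_2^\perp$.

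For the lower bound, let $Y \in (C_1 \otimes C_2)^\perp$ be non-zero, and decompose $Y = \sum_{j=1}^{n_2} y_j \otimes e_j$ with $y_j \in \F_q^{n_1}$. The condition $\langle Y, c_1 \otimes c_2 \rangle = 0$ for all $c_1 \in C_1$ and $c_2 \in C_2$ rewrites as $\sum_j (c_1 \cdot y_j)\, c_2(j) = 0$, i.e.\ the vector $(c_1 \cdot y_j)_j \in \F_q^{n_2}$ lies in $C_2^\perp$ for every $c_1 \in C_1$. A dichotomy then closes the argument. If every $y_j$ lies in $C_1^\perp$, then each non-zero $y_j$ has weight at least $d_1^\perp$, so $\wt(Y) \geq d_1^\perp$. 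Otherwise, some $y_{j_0}$ satisfies $c_1 \cdot y_{j_0} \neq 0$ for a suitable $c_1 \in C_1$, and then $(c_1 \cdot y_j)_j$ is a non-zero element of $C_2^\perp$ whose support is contained in $\{j : y_j \neq 0\}$; hence $\#\{j : y_j \neq 0\} \geq d_2^\perp$, which forces $\wt(Y) \geq d_2^\perp$.

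The main difficulty is essentially notational: fixing a consistent identification between $\F_q^{n_1} \otimes \F_q^{n_2}$ and matrices, and keeping track of which tensor factor corresponds to ``rows'' vs.\ ``columns''. Once this bookkeeping is settled, both equations reduce to standard dual-code manipulations together with the above dichotomy on whether certain projections vanish.
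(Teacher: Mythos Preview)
Your argument is correct, and for the minimum-distance equality \eqref{eq:distdualtensor} it is essentially identical to the paper's: both use the same dichotomy on whether the map $c_1 \mapsto (c_1 \cdot y_j)_j$ (phrased in the paper as the matrix product $c \times v$ for $v \in C_1$) is identically zero on $C_1$, and both exhibit the same rank-one tensors for the upper bound.

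For Equation \eqref{eq:dualtensor} there is a mild difference in packaging. You first prove $C_1 \otimes C_2 = (C_1 \otimes \F_q^{n_2}) \cap (\F_q^{n_1} \otimes C_2)$ and then apply the general identity $(U \cap V)^\perp = U^\perp + V^\perp$ together with the base duals $(C_i \otimes \F_q^{n_j})^\perp$. The paper instead checks the easy inclusion $C_1^\perp \boxplus_{\F_q} C_2^\perp \subseteq (C_1 \otimes C_2)^\perp$ directly and then closes by a dimension count, using that $(C_1^\perp \otimes \F_q^{n_2}) \cap (\F_q^{n_1} \otimes C_2^\perp) = C_1^\perp \otimes C_2^\perp$. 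The two arguments are dual to one another and of comparable length; yours is slightly more structural, while the paper's avoids invoking the abstract identity $(U \cap V)^\perp = U^\perp + V^\perp$ in favour of an explicit dimension check.
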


The dual distance $d^\perp$ was already known in the literature, with a proof given in \cite{cross2024quantum} for example. We report it here since some techniques are re-used for the proof of Lemma \ref{lem:lemmetechnique}. 

\begin{proof}
 Recall that $C_1^\perp \otimes \F_q^{n_2}$ is the set of all matrices of size $n_2 \times n_1$ whose rows are elements of $C_1^\perp$, while  $\mathbb{F}_q^{n_1} \otimes C_2^\perp$ is the set of those whose columns are elements of $C_2^\perp$. The intersection of the two spaces is $C_1^\perp \otimes C_2^\perp$ by definition. It is now easy to see that $C_1^\perp \otimes \F_q^{n_2}$ and $\mathbb{F}_q^{n_1} \otimes C_2^\perp$ are included in $(C_1 \otimes C_2)^\perp$, hence, by computing the dimension on both sides, we show Equality \eqref{eq:dualtensor}.

To obtain the dual distance $d^\perp$, we first show that $d^\perp \geq \min \{ d_1^\perp, d_2^\perp \} $. Let $c \in  C_1^\perp \boxplus_{\F_q} C_2^\perp$, $c \neq 0$, and consider the matrix-products $c \times v \in \F_q^{n_2}$ for all $v \in C_1$. Note that by construction of $c$, $\forall x \in C_1, \, c \times x \in C_2^\perp$. There are two cases:
\begin{itemize}
\item if $\forall v \in C_1, \, c \times v = 0$ then $c \in C_1^\perp \otimes \F_q^{n_2}$ and $\wt(c) \geq d_1^\perp$;
\item if $\exists v \in C_1, \, c \times v \in C_2^\perp \setminus \{0 \}$ then $\wt(c) \geq \wt(c \times v)\geq d_2^\perp$.
\end{itemize}
To prove $d^\perp \leq \min \{ d_1^\perp, d_2^\perp \}$, simply consider the codewords $c_1^\perp \otimes (1, 0, \dots, 0)$ and $(1, 0, \dots, 0) \otimes c_2$ where $c_1$ and $c_2$ are codewords of $C_1^\perp \setminus \{ 0 \}$ and $C_2^\perp \setminus \{ 0 \}$ respectively, of minimal weights.

\end{proof}

\begin{example}[Dual of $C_0(a,b)$]
Let $(a,b) \in \llbracket 1, q-1 \rrbracket^2$, and consider the code $C_0(a,b) = \PRS_q(a+1) \otimes \PRS_q(b+1)$. Its dual is
\[
C_0(a,b)^\perp = \PRS_q(q-a) \boxplus_{\F_q} \PRS_q(q-b) \, ,
\]
and its dual distance is $\min \{ a,b \} +2$.
\end{example}

In the general case, we can precisely describe the set of all codewords of minimum weight, in a check-product of codes. This is the aim of the following lemma.

\begin{lemma}\label{lem:lemmetechnique}
Let $C_1$ and $C_2$ be two codes over $\F_q$ of length $n_1$ and $n_2$, and of minimum distances $d_1$ and $d_2$, respectively. Suppose $2 \leq d_1 < d_2$, and consider the check-product
\[
C \coloneqq C_1 \boxplus_{\F_q} C_2 \, .
\] 
Then, the words $c \in C \setminus \{ 0 \}$ of minimum weight are the $n_2 \times n_1$ matrices with one row being a minimally weighted word of $C_1 \setminus \{ 0 \}$, and the other rows equal to zero.
\end{lemma}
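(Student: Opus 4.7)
The plan is to deduce from $c \in C_1 \boxplus_{\F_q} C_2$ and $\wt(c) = d_1$ that every row of $c$ (viewed as an $n_2 \times n_1$ matrix) lies in $C_1$, and then to conclude by a row-weight count. The starting point is the identity $(C_1 \boxplus_{\F_q} C_2)^\perp = C_1^\perp \otimes C_2^\perp$, obtained by dualizing Equation \eqref{eq:dualtensor} of Proposition \ref{prop:checkproduct} applied to $C_1^\perp$ and $C_2^\perp$. Writing the Euclidean pairing of $c$ with a pure tensor $u \otimes v \in C_1^\perp \otimes C_2^\perp$ as $v^\top c u$, this identity gives the concrete criterion: $c \in C_1 \boxplus_{\F_q} C_2$ if and only if $cu \in C_2$ for every $u \in C_1^\perp$.

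Fix $c \in C \setminus \{0\}$ with $\wt(c) = d_1$, and let $S \subseteq \llbracket 1, n_2 \rrbracket$ be the set of indices of its non-zero rows, of cardinality $s$. Each non-zero row contributes at least $1$ to the weight, so $s \leq d_1 < d_2$. For every $u \in C_1^\perp$, the $i$-th coordinate of $cu$ is the dot product of the $i$-th row of $c$ with $u$, and so vanishes whenever that row does; hence $\mathrm{supp}(cu) \subseteq S$ and $\wt(cu) \leq s < d_2$. Since $cu \in C_2$, this forces $cu = 0$ for all $u \in C_1^\perp$, which is exactly the statement that every row of $c$ belongs to $(C_1^\perp)^\perp = C_1$.

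Each of the $s$ non-zero rows of $c$ is now a non-zero codeword of $C_1$, hence has weight at least $d_1$. Summing row weights gives $s \cdot d_1 \leq \wt(c) = d_1$, forcing $s = 1$, and the unique non-zero row has weight exactly $d_1$ — that is, it is a minimum-weight codeword of $C_1 \setminus \{0\}$. Conversely, such a matrix is of the form $r \otimes e_i$ with $r$ a minimum-weight word of $C_1$ and $e_i$ a standard basis vector of $\F_q^{n_2}$, and so clearly lies in $C_1 \otimes \F_q^{n_2} \subseteq C$ with weight $d_1$. The only delicate step is the criterion $cu \in C_2$; the remainder is a straightforward weight count, crucially exploiting the hypothesis $d_1 < d_2$ to eliminate $cu$ as a non-zero element of $C_2$.
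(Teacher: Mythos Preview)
Your proof is correct, and in fact somewhat slicker than the paper's. The two arguments are dual to one another in a precise sense.

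The paper multiplies $c$ on the \emph{left} by vectors $b_k \in C_2^\perp$, obtaining elements of $C_1$; it then argues by contradiction, assuming two distinct non-zero rows $c_i, c_j$ and constructing a tailored basis $(b_k)$ of $C_2^\perp$ in which each $b_k$ kills one of the indices $i$ or $j$ (this is where $d_2 > 2$ enters). Each product $b_k^\top c$ then omits at least one non-zero row and so has weight strictly below $d_1$, forcing it to vanish; hence the columns of $c$ lie in $C_2$, contradicting $\wt(c) = d_1 < d_2$.

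You instead multiply on the \emph{right} by $u \in C_1^\perp$, obtaining elements of $C_2$; the support bound $\wt(cu) \leq s < d_2$ is immediate from the row structure, with no need for a special basis or a contradiction setup, and the conclusion that every row lies in $C_1$ drops out directly. The final weight count $s \cdot d_1 \leq d_1$ is the same idea as the paper's but packaged more transparently. Your route also happens not to use the hypothesis $d_1 \geq 2$ (only $d_1 < d_2$), whereas the paper's basis construction genuinely needs $d_2 \geq 3$.
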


\begin{proof}
Let $c \in C$ be a codeword of weight $d_1$, and denote by $c_1, \dots, c_{n_2}$ its rows. Suppose there exists $i \neq j$ such that $c_i \neq 0$ and $c_j \neq 0$, and denote by $k_2^\perp$ the dimension of $C_2^\perp$. The code $C_2$ having minimum distance $d_2 > 2$ means that there exists $b_1 = (b_{1,1}, \dots, b_{n_2,1}) \in C_2^\perp$ with $b_{i,1} = 0$ and $b_{j,1} \neq 0$. We complete $(b_1)$ into a basis $(b_k \coloneqq (b_{1,k}, \dots, b_{n_2,k}))_{k \in \llbracket 1, k_2^\perp \rrbracket }$ of $C_2^\perp$ such that
\[
\forall k \in \llbracket 1, k_2^\perp \rrbracket, \, b_{i,k} = 0 \text{ or } b_{j,k} = 0 \, .
\]
Let $k \in \llbracket 1, k_2^\perp \rrbracket$, and consider the matrix multiplication $^tb_k \times c$. Since $b_k \in C_2^\perp$, $^tb_k \times c \in C_1$. However, by our choice of basis $(b_k)_k$, $\wt(c) > \wt(^tb_k \times c)$ and since $\wt(c)$ is the minimum distance of $C_1$, $^t b_k \times c=0$. But $(b_k)_k$ is a basis of $C_2^\perp$, meaning $c \in \F_q^{n_1} \otimes C_2$ whose minimum distance is $d_2$, and $d_2 \leq \wt(c) = d_1 < d_2$, leading to a contradiction.
\end{proof}

\subsubsection{Dual Distance of $C_e(a,b)$}

We are now able to provide the minimum distance of the dual code $C_e(a,b)^\perp$.

\begin{theorem}\label{prop:dualdistance}
Let $(a,b) \in \mathbb{N}^2$ such that $1 \leq a \leq q-1$ and $0 \leq b-ea \leq q-1$. Denote $d_e(a,b)^\perp$ the minimum distance of $C_e(a,b)^\perp$.
Then 
\[
 d_e(a,b)^\perp = \min\{ a,b-ea \} +2  .
\] 
\end{theorem}

\begin{proof}
To get the upper bound, we use the observation that puncturing $C_e(a,b)$ at the points of $\mathcal{H}_e \setminus S_e$ gives the code $\PRS_q(b-ea+1)$ (see Figure \ref{fig:HirzebruchCode}) whose dual distance is $b-ea+2$. Taking a non-zero codeword of minimum weight in $\PRS_q(b-ea+1)^\perp$ and extending its coordinates by $0$ on all points of $\mathcal{H}_e \setminus S_e$ gives a codeword of $C_e(a,b)^\perp$ of weight $b-ea+2$. Using the same technique on the coordinates of $F_e$ gives a codeword of $C_e(a,b)^\perp$ of weight $a+2$. Thus
\[
d_e(a,b)^\perp \leq \min \{ a,b-ea \} + 2 \, . 
\]

For the opposite inequality, recall that, by Proposition \ref{prop: Code Hirzebruch}, we have
\[
C_e(a,b) = \sum_{d_1 = 0}^a \left[ \langle ( 0^{d_1}, \alpha_1^{a-d_1}, \dots, \alpha_q^{a-d_1} ) \rangle \otimes  \PRS_q(b-ea+ed_1+1) \right] \, .
\]
Define $\widetilde{C} \coloneqq \PRS_q(a+1) \otimes (0, \RS_q(b-ea))$. By Lemma \ref{lem:prs}, 
\[
(0, \RS_q(b-ea)) = \bigcap_{d_1=0}^a \PRS_q(b-ea + ed_1 +1) \, ,
\]
and $\widetilde{C} = \sum_{d_1=0}^a \left[  \langle ( 0^{d_1}, \alpha_1^{a-d_1}, \dots, \alpha_q^{a-d_1} ) \rangle \otimes  (0, \RS_q(b-ea))  \right]$ is a subcode of $C_e(a,b)$. Thus
\begin{align*}
C_e(a,b)^\perp &\subset \widetilde{C}^\perp \\
			  &=  \PRS_q(q - a) \boxplus_{\F_q} \left[  \langle (1, 0, \dots, 0) \rangle + (0, \RS_q(q - b + ea)) \right] \\
			  &= \left[ \PRS_q(q-a) \boxplus_{\F_q} (0,\RS_q(q- b + ea)) \right] + E_1 \, ,
\end{align*}
with $E_1 \coloneqq \F_q^{q+1} \otimes  \langle (1, 0, \dots, 0) \rangle$. The space $E_1$ is the set of all square matrices of size $(q+1)^2$ whose first row is an element of $\F_q^{q+1}$ and the $q$ other rows are zero. We write $\widetilde{C}^\perp$ as the direct sum of two vector spaces whose supports are disjoints:
\[
\widetilde{C}^\perp = E_1 \oplus E_2 \, ,
\] 
where
\[
E_2 \coloneq \left[ \PRS_q(q-a) \otimes (0,\F_q^q) \right] + \left[ \F_q^{q+1} \otimes (0, \RS_q(q-b+ea)) \right] \, .
\]
Note that the minimum distances of $E_1$ and $E_2$ are, respectively, $d(E_1) = 1$ and $d(E_2) = \min \{ a+2 ,b-ea+1  \}$ by Equality \eqref{eq:distdualtensor} of Proposition \ref{prop:checkproduct}, meaning $d(\widetilde{C}^\perp ) = 1$. Let $c \in C_e(a,b)^\perp \subset \widetilde{C}^\perp$ and suppose $\wt( c ) < \min \{a+2, b-ea+1 \}$. Write $c = c_1 + c_2$ where $c_1 \in E_1$ and $c_2 \in E_2$. The words $c_1$ and $c_2$ have disjoint supports and $\wt(c) = \wt(c_1) + \wt(c_2)$, but since $d(E_2) =  \min \{ a+2 ,b-ea+1  \} > \wt(c)$ then $c_2 = 0$ and $c \in E_1$. This means that $c$ is a matrix of size $(q+1)^2$ with every rows but the first one equal to zero. However, the puncturing  of $C_e(a,b)$ at the points of $\mathcal{H}_e \setminus \{ F_e \}$ (\ie where we only keep the first row) is the code $\PRS_q(a+1)$ whose dual distance is $a+2 > \wt (c)$, leading to a contradiction. This entails
\[
\min\{ a+2, b-ea +1 \} \leq d_e(a,b)^\perp \, .
\]

It remains to improve our lower bound by $1$ when $a+2 \geq b-ea+2$. Suppose $b-ea \leq a$, and let $c' \in E_1 \oplus E_2 $ be a codeword of weight $b-ea+1$. As we saw, by the disjoints supports of $E_1$ and $E_2$ and since $b-ea+1 < a+2$, $c'$ is an element of $E_2$. By the inclusion
\begin{align*}
E_2 &= \left[  \PRS_q(q-a) \otimes (0,\F_q^q) \right] + \left[  \F_q^{q+1} \otimes (0, \RS_q(q-b+ea)) \right] \\
      &\subset \left[ \PRS_q(q-a) \otimes \F_q^{q+1} \right] + \left[ \F_q^{q+1} \otimes \PRS_q(q-b+ea+1) \right] \\
      &= \PRS_q(q-a) \boxplus_{\F_q} \PRS_q(q-b+ea+1) \\ &= \left[ \PRS_q(a+1) \otimes \PRS_q(b-ea) \right]^\perp \, ,
\end{align*}
the word $c'$ is a word of $\left[ \PRS_q(a+1) \otimes \PRS_q(b-ea) \right]^\perp$ of minimum weight. By Lemma \ref{lem:lemmetechnique}, the support of $c'$ corresponds to some $\F_q$-rational points of the affine line $\Sigma_{\text{aff}} \coloneqq \Sigma_e \setminus \{ \Sigma_e \cap F_e \}$, where $\Sigma_e$ is either $S_e$ or $\sigma_e$ (see Figure \ref{fig:HirzebruchDiviseurs}). However, puncturing $C_e(a,b)$ at the points of $\mathcal{H}_e \setminus \Sigma_{\text{aff}}$ gives the code $\RS_q(b+1)$ or $\RS_q(b-ea+1)$ whose dual distance is $b+2$ or $b-ea+2$ respectively, which is strictly more than $b-ea +1$, meaning $c' \notin C_e(a,b)^\perp$ and improving our inequality
\[
d_e(a,b)^\perp \geq \min \{ a, b-ea \} +2 \, .
\]

\end{proof}

\section{Puncturing $C_e(a,b)$.}\label{section:3}

We describe in this section a puncturing of $C_e(a,b)$ corresponding to its open $\mathbb{A}^2$. Instead of evaluating the polynomials $P \in \Lis (aS_e+bF_e)$ at all the points of $\mathcal{H}_e (\F_q) = \mathbb{P}^1 (\F_q) \times \mathbb{P}^1(\F_q)$ (Figure \ref{fig:HirzebruchCode}), we will study the images of the evaluation maps on the points of $\mathbb{A}^2(\F_q) = \F_q \times \F_q$, these points being the $\F_q$-rational points of $\mathcal{H}_e \setminus \{ S_e \cup F_e \}$(see Figure \ref{fig:HirzebruchDiviseurs}). We will compute its parameters, including the dual distance, before giving an explicit form of the dual. Expliciting the dual of length $q^2$ will give the set of all the codewords $c \in C_e(a,b)^\perp$ that are zero on the first row and first column. Computing the dual $C_e(a,b)^\perp$ in Section \ref{section:4} will only be a matter of finding every codeword that is non-zero on its first row or first column.

\begin{remark}
Originally, AG codes on the Hirzebruch surfaces were studied on the $(q-1)^2$ rational points of $\{X_1X_2T_1T_2 \neq 0 \}$ by Hansen \cite{hansen2002toric} in the injective case $b \leq q-2$. They correspond to the puncturing of $C_e(a,b)$ at the first two rows and columns. The parameters and properties for these codes were studied in the original paper \cite{hansen2002toric}, and later in \cite{little2006toric} and \cite{hansen2012quantum}. These are also particular cases of the \emph{generalized toric codes} described in \cite{ruano2009structure}, along with their duals. One may adapt the proofs of this section to recover the parameters, dual and dual distance of the original toric code of length $(q-1)^2$.
\end{remark}

\subsection{Puncturing at $\{ X_1T_1=0 \}$ }
We consider the puncturing of $C_e(a,b)$ at the points of $S_e \cup F_e$, and we define $U_{e,q} \coloneqq \left( \mathcal{H}_e \setminus \{ S_e \cup F_e \}\right) (\F_q)$. In our representation, this set is given by
\[
U_{e,q} = \{ (1,\alpha,1, \beta) \, | \, (\alpha,\beta) \in \F^2_q \} \simeq \mathbb{A}^2(\F_q) \, .
\]

\subsubsection{The evaluation code $C_{\mathbb{A},e}(a,b)$}

\begin{definition}[$C_{\mathbb{A},e}(a,b)$]
Let $e \in \mathbb{N}$ and $(a,b) \in \mathbb{N}^2$. We denote by $C_{\mathbb{A},e}(a,b)$ the puncturing of $C_e(a,b)$ at the $2q+1$ coordinates of the first row and first column.
\end{definition}

\begin{proposition}\label{prop:codeq}
Let $a,b \in \mathbb{N}$, and let $\mathbb{F}_q = \{ \alpha_1, \dots, \alpha_q \}$. Define the evaluation morphism at the points of $U_{e,q}$ by
\[
\ev_{U_{e,q}}: \left\{
\begin{array}{r c l}
\Lis(aS_e + bF_e) & \rightarrow & \F_q^{q^2} \\
f & \mapsto & (f(P))_{P \in U_{e,q}}
\end{array}
\right . \, . 
\]
Then, $C_{\mathbb{A},e}(a,b) = \Image(\ev_{U_{e,q}})$, and an explicit form for this code is given by
\[
C_{\mathbb{A},e}(a,b) = \sum_{d = 0}^a \left[ \langle (\alpha_1^d, \dots, \alpha_q^d) \rangle \otimes \RS_q(b - ed +1) \right] \, .
\]
\end{proposition}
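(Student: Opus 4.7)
The plan is to combine the explicit form of $C_e(a,b)$ given in Proposition \ref{prop: Code Hirzebruch} with the observation that puncturing is a linear projection, and hence commutes with sums and behaves nicely on tensor products of codes.

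First, I would establish the equality $C_{\mathbb{A},e}(a,b) = \Image(\ev_{U_{e,q}})$ directly from the definitions. In the indexing used to represent codewords of $C_e(a,b)$ as $(q+1)\times(q+1)$ matrices (Figure \ref{fig:HirzebruchCode}), the first row collects the evaluations at the $q+1$ rational points of $F_e = \{T_1 = 0\}$ and the first column collects those at the $q+1$ points of $S_e = \{X_1 = 0\}$; these two sets overlap exactly at $Q_e$, so deleting the first row and first column removes the $2q+1$ coordinates corresponding to $S_e \cup F_e$, leaving the $q^2$ coordinates of $U_{e,q} = \mathcal{H}_e(\F_q)\setminus (S_e \cup F_e) \simeq \mathbb{A}^2(\F_q)$. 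Hence the puncturing amounts exactly to restricting the evaluation morphism to these points, yielding $\Image(\ev_{U_{e,q}})$.

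Next, I would apply this puncturing term-by-term to the formula
\[
C_e(a,b) = \sum_{d_1 = 0}^a \langle ( 0^{d_1}, \alpha_1^{a-d_1}, \dots, \alpha_q^{a-d_1} ) \rangle \otimes \PRS_q(b-ea+ed_1+1).
\]
Since puncturing is $\F_q$-linear, it commutes with the sum. For a summand of the form $\langle v \rangle \otimes W$, whose codewords are matrices whose $i$-th row is $v_i \cdot w$ for some fixed $w \in W$, removing the first row corresponds to deleting the first entry of $v$, and removing the first column corresponds to puncturing $W$ at its first coordinate. The first entry of $v = ( 0^{d_1}, \alpha_1^{a-d_1}, \dots, \alpha_q^{a-d_1})$ is dropped in both the $d_1 = 0$ and $d_1 \geq 1$ cases, producing $(\alpha_1^{a-d_1},\dots,\alpha_q^{a-d_1})$. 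As for $W = \PRS_q(b-ea+ed_1+1)$, our choice of representatives for $\mathbb{P}^1(\F_q)$ (Notation \ref{nota: P1}) places the point at infinity in the first coordinate, so puncturing it at that position gives $\RS_q(b-ea+ed_1+1)$ by the very definition of the affine Reed--Solomon code.

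Finally, performing the change of variables $d = a-d_1$ (which turns $b-ea+ed_1+1$ into $b-ed+1$) identifies the resulting sum with
\[
\sum_{d=0}^a \langle (\alpha_1^d,\dots,\alpha_q^d) \rangle \otimes \RS_q(b-ed+1),
\]
as claimed. I do not expect any genuine obstacle in this argument: the only point requiring a moment of attention is checking that the row-and-column puncturing of a pure tensor space $\langle v\rangle \otimes W$ behaves as described uniformly in $d_1$, and that the indexing of $\PRS_q$ is consistent with placing the punctured coordinate at the "point at infinity".
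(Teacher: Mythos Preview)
Your proposal is correct and follows essentially the same approach as the paper: identify the puncturing at the first row and column with evaluation on $U_{e,q}$, then puncture each tensor factor in the formula of Proposition~\ref{prop: Code Hirzebruch} at its first coordinate (turning $\PRS_q$ into $\RS_q$ and dropping the leading $0^{d_1}$), and finally reindex via $d=a-d_1$. The paper's proof states this in two sentences; you have simply spelled out the details.
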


\begin{proof}
By our choices of representation, puncturing at the first row and the first column is the same as evaluating the polynomials of $\Lis(aS_e + bF_e)$ at the $\F_q$-rational points of $\mathcal{H}_e \setminus \{ S_e \cap F_e \}$ (see Figure \ref{fig:HirzebruchCode}), thus $C_{\mathbb{A},e}(a,b) = \Image(\ev_{U_{e,q}})$. Furthermore, by puncturing the codes at the first coordinates on both sides of the tensor product of \ref{prop: Code Hirzebruch}, we get the explicit form of $C_{\mathbb{A},e}(a,b)$.  
\end{proof}
\begin{notation}
As for $C_e(a,b)$, we will also denote $C_{\mathbb{A},e}(a\sigma_e + (b-ea)F_e) \coloneqq C_{\mathbb{A},e}(a,b)$.
\end{notation}

Since we evaluate the polynomials $P \in \Lis(aS_e + bF_e)$ at points of $\{X_1T_1 \neq 0 \}$, a generating family of $C_{\mathbb{A},e}(a,b)$ is given by the codewords of the form $\ev_{U_{e,q}}(M)$, where $M = X^dT^c \in \F_q[X,T]$ and
\[
 \left\{
\begin{array}{c}
b-ed \geq c,\\
a \geq d
\end{array}.
\right . \,
\]
As such, the codes $C_{\mathbb{A},e}(a,b)$ are subcodes of Reed--Muller codes over $\mathbb{A}^2$, whose definition and minimum distance are recalled below.

\begin{definition}[Reed--Muller codes on the affine plane.]
Let $k \in \mathbb{N}$. The \emph{affine Reed--Muller code on the plane}, denoted $\RM(k,2,q)$, is the image of the evaluation morphism

\[
\ev_{\mathbb{A}^2} : \left\{ \begin{array}{r c l} 
\F_q[X,T]_{k}^h &\rightarrow & \F_q^{q^2} \\
P &\mapsto &(P(x,y))_{(x,y) \in \mathbb{A}^2} 
\end{array}\right. \, .
\]
Let $\F_q = \{ \alpha_1, \dots, \alpha_q \}$. An explicit form of $\RM(k,2,q)$ is
\[
\RM(k,2,q) = \sum_{d=0}^k \left[ \langle (\alpha_1^d, \dots, \alpha^d_q) \rangle \otimes \RS_q(k - d + 1) \right] = 	C_{\mathbb{A},1}(k,k) \, .
\]
\end{definition}

\begin{proposition}[{\cite[2.6.2]{paterson2000generalized}}]
Let $k \in \mathbb{N}$. Write $k = r(q-1) + s$ with $r \in \mathbb{N}$ and $0 \leq s < q-1$. The minimum distance $d(\RM(k,2,q))$ is given as follow:
\begin{itemize}
\item if $r =0$, then
\[
d(\RM(k,2,q)) = q(q-s) \, ;
\]
\item if $r=1$, then
\[
d(\RM(k,2,q)) = q-s \, ;
\]
\item and if $r \geq 2$, then
$d(\RM(k,2,q)) = 1$.
\end{itemize}
\end{proposition}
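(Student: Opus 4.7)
The plan is to match upper and lower bounds on the minimum distance in each of the three regimes $r=0$, $r=1$, $r\ge 2$. The upper bounds come from explicit polynomials. Fix $\F_q=\{\alpha_1,\ldots,\alpha_q\}$ and $\F_q=\{\beta_1,\ldots,\beta_q\}$. When $r=0$, take $P(X,T)=\prod_{i=1}^{s}(X-\alpha_i)$ of total degree $s$, which vanishes on $s$ columns and yields a codeword of weight $q(q-s)$. When $r=1$, take $P(X,T)=\prod_{i=1}^{q-1}(X-\alpha_i)\prod_{j=1}^{s}(T-\beta_j)$ of total degree $q-1+s=k$, whose nonzero locus is the single column $X=\alpha_q$ minus $s$ rows, giving weight $q-s$. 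When $r\ge 2$, take $P(X,T)=\prod_{i=1}^{q-1}(X-\alpha_i)\prod_{j=1}^{q-1}(T-\beta_j)$ of total degree $2(q-1)\le k$, giving weight $1$.

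For the lower bound when $r=0$, the kernel of $\ev_{\mathbb{A}^2}$ meets polynomials of total degree $\le k<q-1$ trivially, so it suffices to bound the zero set of any nonzero $P$ of total degree $\le k$. Writing $P=\sum_{j=0}^{K}T^{j}P_{j}(X)$ with $K=\deg_{T}P$, the inequality $\deg P_{K}\le k-K$ gives that $P_K$ vanishes at at most $k-K$ values $\alpha\in\F_q$; for the remaining $\alpha$, the univariate $P(\alpha,T)$ has degree exactly $K$ and at most $K$ roots. Hence at most $(k-K)q+(q-k+K)K=kq-K(k-K)\le kq$ zeros in total, so the weight is $\ge q^{2}-kq=q(q-s)$.

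For $r=1$, write $k=q-1+s$ with $0\le s\le q-2$. The key observation is that reducing a polynomial representative modulo the ideal $(X^{q}-X,T^{q}-T)$ only decreases the total degree (each substitution $X^{q}\mapsto X$ drops the degree by $q-1$), so one may assume $\deg_{X}P,\deg_{T}P\le q-1$ together with $\deg P\le k$. Split on whether $K:=\deg_{T}P\ge s$ or $K\le s-1$. In the first case $k-K\le q-1$ and the Schwartz--Zippel count of the previous paragraph applies verbatim, giving $\wt \ge q-sq+K(k-K)$; on $[s,q-1]$ the quadratic $K(k-K)$ is minimized at both endpoints with common value $s(q-1)$, and $q-sq+s(q-1)=q-s$. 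In the second case the reduced $P_{K}\ne 0$ has degree $\le q-1$, hence some $\alpha\in\F_q$ satisfies $P_{K}(\alpha)\ne 0$, and the single column $X=\alpha$ already contributes $\ge q-K\ge q-s+1$ nonzero entries.

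For $r\ge 2$ the lower bound $d\ge 1$ is automatic and matched by the weight-one codeword above. The main obstacle is establishing the reduction step modulo $(X^{q}-X,T^{q}-T)$ in the $r=1$ regime: without the bound $\deg_{X}P\le q-1$ on the reduced representative, one cannot control the number of $\alpha$ with $P_{K}(\alpha)=0$ when $K\le s-1$, and the case analysis collapses. The bilinear symmetry between $X$ and $T$ is also implicitly used: if the argument on $K=\deg_{T}P$ were ever to fail, the symmetric one on $\deg_{X}P$ would take over.
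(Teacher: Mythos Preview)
The paper does not prove this proposition; it simply cites it from \cite{paterson2000generalized} and uses it as input for Proposition~\ref{prop:parametreq}. There is therefore no ``paper's own proof'' to compare against.

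Your argument is correct and self-contained. The upper bounds via the three explicit products of linear forms are the standard ones. For the lower bounds, your column-by-column Schwartz--Zippel count (writing $P=\sum_j T^jP_j(X)$ and controlling the columns where $P_K$ vanishes) is clean and works exactly as you say. The only place that needs any care is the $r=1$ case, and your reduction modulo $(X^q-X,\,T^q-T)$ handles it: the reduction acts monomial-wise and drops each exponent by a multiple of $q-1$, so total degree can only decrease, while the reduced polynomial is nonzero as long as the underlying codeword is. After that, the split $K\ge s$ versus $K\le s-1$ is the right dichotomy, and your observation that the quadratic $K(k-K)$ attains its minimum on $[s,q-1]$ at both endpoints with common value $s(q-1)$ gives exactly $q-s$. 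The last paragraph of your write-up is commentary rather than proof; you may drop it, since the reduction step is fully justified once you note that polynomials with $\deg_X,\deg_T\le q-1$ are determined by their values on $\F_q^2$.
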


\begin{remark}
Suppose $b-ea \leq 0$ and $e \geq 1$. The space $\Lis(aS_e + bF_e)$ is isomorphic to the set of all weighted polynomials in $\F_q[T,X]$ of weighted degree less than $b$ with weight distribution $(1,e)$. In that case, $C_{\mathbb{A},e}(a,b)$ is the \emph{weighted affine Reed--Muller code} $\WARM (b,2,q)$, well-studied in \cite{sorensen1992weighted}. In particular, by \cite[Theorem 3]{sorensen1992weighted}, $\WARM(b,2,q)^\perp = \WARM((q-1)(1+e) - b - 1,2,q)$, and consequently
\[
C_{\mathbb{A},e}(a,b)^\perp = C_{\mathbb{A},e}(\widetilde{a}, \widetilde{b}) \, ,
\]
where $\widetilde{b} = (q-1)(1+e) - b - 1$ and $\widetilde{b} - e \widetilde{a} \leq 0$.
\end{remark}

Removing the points of $S_e$ and $F_e$ makes it so the codes $C_{\mathbb{A},e}(a,b)$ are parametrized by divisors that avoid the points of evaluation, allowing inclusions between some of the codes.

\begin{proposition}
For all $(a_1,b_1), (a_2,b_2) \in \mathbb{N}^2$,		
\[
 (a_1 \leq a_2 \text{ and } b_1 \leq b_2) \Rightarrow C_{\mathbb{A},e}(a_1, b_1) \subset C_{\mathbb{A},e}(a_2,b_2) \, .
\]
Furthermore, for all integer $e' \geq e$ and all $(a,b) \in \mathbb{N}^2$,
\[
C_{\mathbb{A},e'} (a,b) \subset C_{\mathbb{A},e} (a,b) \, .
\]
\end{proposition}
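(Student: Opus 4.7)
The plan is to reduce both inclusions to the elementary monotonicity of Reed--Solomon codes, via the explicit tensor-sum description
\[
C_{\mathbb{A},e}(a,b) = \sum_{d=0}^{a} \left[ \langle (\alpha_1^d, \dots, \alpha_q^d) \rangle \otimes \RS_q(b - ed + 1) \right]
\]
given in Proposition \ref{prop:codeq}. The key preliminary fact I would state (or recall in a single line) is that $\RS_q(k_1) \subset \RS_q(k_2)$ whenever $k_1 \leq k_2$: this follows immediately from the canonical basis $(\alpha^{*d})_{d \in \llbracket 0, k-1 \rrbracket}$, together with the convention $\RS_q(k) = \{0\}$ for $k \leq 0$, so that even cases where one of the dimensions would be nonpositive are handled uniformly.

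For the first implication, I would fix $(a_1,b_1),(a_2,b_2) \in \mathbb{N}^2$ with $a_1 \leq a_2$ and $b_1 \leq b_2$, and compare the two sums term by term for $d \in \llbracket 0, a_1 \rrbracket$. The factor $\langle (\alpha_1^d, \dots, \alpha_q^d) \rangle$ is identical on both sides, and $b_1 - ed + 1 \leq b_2 - ed + 1$ yields $\RS_q(b_1 - ed + 1) \subset \RS_q(b_2 - ed + 1)$ by the monotonicity recalled above. The remaining terms indexed by $d \in \llbracket a_1 + 1, a_2 \rrbracket$ simply enlarge the sum defining $C_{\mathbb{A},e}(a_2, b_2)$, so the inclusion follows.

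For the second implication, I would fix $(a,b) \in \mathbb{N}^2$ and $e' \geq e$. Again I compare summand by summand for $d \in \llbracket 0, a \rrbracket$: the vector $(\alpha_1^d, \dots, \alpha_q^d)$ does not depend on the invariant, while $b - e'd \leq b - ed$ (using $d \geq 0$) gives $\RS_q(b - e'd + 1) \subset \RS_q(b - ed + 1)$, hence the inclusion of each summand and therefore of the two sums.

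There is no real obstacle: the only point of care is to make sure that the convention $\RS_q(k) = \{0\}$ for $k \leq 0$ is in force, so that the argument applies verbatim when some of the parameters $b_i - ed + 1$ or $b - e'd + 1$ fail to be positive (for instance in the second implication, if $e' > e$ and $d$ is large enough to push $b - e'd + 1$ below zero while $b - ed + 1$ remains positive); in that case the inclusion is trivial. With this convention noted once at the start, the whole proof reduces to two short monotonicity comparisons, and can be written in essentially the length of a single paragraph.
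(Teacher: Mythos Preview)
Your proposal is correct and follows exactly the same approach as the paper: invoke the explicit tensor-sum description of Proposition \ref{prop:codeq} and reduce both inclusions to the monotonicity $\RS_q(k) \subset \RS_q(k')$ for $k \leq k'$. The paper's proof is just the one-line version of what you wrote.
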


\begin{proof}
The statement follows by using Proposition \ref{prop:codeq} together with the fact that when $k \leq k' $ we have $\RS_q(k) \subset \RS_q(k')$.
\end{proof}

\subsubsection{Parameters of $C_{\mathbb{A},e}(a,b)$}
We will now compute the parameters of the $[q^2, k_{\mathbb{A},e}(a,b), d_{\mathbb{A},e}(a,b)]_q$-code $C_{\mathbb{A},e}(a,b)$.

\begin{proposition}[Parameter of $C_{\mathbb{A},e}(a,b)$]\label{prop:parametreq}
We fix $e \geq 1$. Let $(a,b) \in \mathbb{N}^2$ such that $0\leq a \leq q-1$ and $0 \leq b-ea$. Let us denote $[q^2, k_{\mathbb{A},e}(a,b),d_{\mathbb{A},e}(a,b)]_q$ the parameters of $C_{\mathbb{A},e}(a,b)$.

We set 
\[ s_{\mathbb{A}} \coloneqq \frac{b+1-q}{e} \, \text{ and } \widetilde{s}_{\mathbb{A}} \coloneqq \left\{ \begin{array}{c} \min\{ \lfloor s_{\mathbb{A}} \rfloor, a \} \text{ if } s_{\mathbb{A}} \geq 0 \\ -1 \text{ otherwise } \end{array} \right. \, .\]
Then, the dimension of $C_{\mathbb{A},e}(a,b)$ is
\[ k_{\mathbb{A},e}(a,b) = (\widetilde{s}_{\mathbb{A}} + 1)q + (a-\widetilde{s}_{\mathbb{A}}) \left( b+1 - e \left( \frac{a + \widetilde{s}_{\mathbb{A}} +1}{2} \right) \right) \, .
\]
Its minimum distance $d_{\mathbb{A},e}(a,b)$ is given as follow:
\begin{itemize}
\item if $q-1 > b$, then
\[ d_{\mathbb{A},e} = q(q-b) \, ; \]
\item if $b-ea < q-1 \leq b$, then
\[ d_{\mathbb{A},e}(a,b) = q - \left\lfloor \frac{b-q+1}{e} \right\rfloor \, ; \]
\item and if $q-1 \leq b-ea$, then
\[ d_{\mathbb{A},e}(a,b) = q-a \, .\]
\end{itemize}
\end{proposition}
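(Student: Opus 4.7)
The plan is to exploit the explicit decomposition of Proposition~\ref{prop:codeq} by isolating those indices where the Reed--Solomon factor is saturated. Since $\RS_q(k) = \F_q^q$ exactly when $k \geq q$ and has dimension $k$ otherwise, the condition $b - ed + 1 \geq q$ is equivalent to $d \leq \widetilde{s}_{\mathbb{A}}$, so the sum splits into a ``saturated'' block $\RS_q(\widetilde{s}_{\mathbb{A}}+1) \otimes \F_q^q$ of dimension $(\widetilde{s}_{\mathbb{A}}+1)q$, plus the remaining layers for $d \in \llbracket \widetilde{s}_{\mathbb{A}}+1, a \rrbracket$. To see the sum is direct, I would observe that each codeword is the evaluation on $\F_q \times \F_q$ of a unique polynomial of the form $\sum_d X^d P_d(T)$ with $\deg P_d \leq \min(b-ed, q-1)$; this follows from the injectivity of bivariate evaluation on polynomials of bidegree at most $(q-1,q-1)$. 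Adding $(\widetilde{s}_{\mathbb{A}}+1)q + \sum_{d=\widetilde{s}_{\mathbb{A}}+1}^a(b-ed+1)$ and simplifying using $\sum_{d=\widetilde{s}_{\mathbb{A}}+1}^a d = (a-\widetilde{s}_{\mathbb{A}})(a+\widetilde{s}_{\mathbb{A}}+1)/2$ yields the formula.

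\textbf{Minimum distance, upper bound.} For each case I would exhibit the codeword $\ev_{U_{e,q}}(R(X) Q(T))$ where $R = \prod_{i=1}^{d_0}(X - \alpha_i)$ has $d_0$ distinct roots in $\F_q$ and $Q = \prod_{j=1}^{c_0}(T - \beta_j)$ has $c_0$ distinct roots in $\F_q$, with $(d_0, c_0)$ chosen as $(0, b)$, $(\widetilde{s}_{\mathbb{A}}, q-1)$, $(a, q-1)$ for Cases 1, 2, 3 respectively. The constraint $c_0 \leq b - ed_0$ that ensures $R Q \in \Lis(aS_e+bF_e)$ is trivially met in Case 1, holds by definition of $\widetilde{s}_{\mathbb{A}}$ in Case 2, and holds by the hypothesis $q-1 \leq b-ea$ in Case 3. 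A short inclusion-exclusion count on $(\{R=0\} \times \F_q) \cup (\F_q \times \{Q=0\})$ gives the announced weights $q(q-b)$, $q - \widetilde{s}_{\mathbb{A}}$, and $q - a$.

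\textbf{Minimum distance, lower bound.} For Case 1 I would observe that when $e \geq 1$, any monomial $X^d T^c$ of $\Lis(aS_e+bF_e)$ satisfies $d + c \leq b - (e-1)d \leq b$, which gives the inclusion $C_{\mathbb{A},e}(a,b) \subseteq \RM(b,2,q)$ and the bound $q(q-b)$ from the cited minimum distance of $\RM(b,2,q)$. For Case 3, the hypothesis $b - ea \geq q - 1$ forces $\RS_q(b-ed+1) = \F_q^q$ for every $d \leq a$, so $C_{\mathbb{A},e}(a,b) = \RS_q(a+1) \otimes \F_q^q$ has minimum distance $q - a$. Case 2 is the main obstacle: I would take a non-zero codeword represented by $P = \sum_d X^d P_d(T)$ modulo the kernel, set $d^* = \max\{d : P_d \neq 0\}$, and split into two sub-cases. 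If $d^* \leq \widetilde{s}_{\mathbb{A}}$, the codeword lies in $\RS_q(\widetilde{s}_{\mathbb{A}}+1) \otimes \F_q^q$ and its weight is at least $q - \widetilde{s}_{\mathbb{A}}$. If $d^* \geq \widetilde{s}_{\mathbb{A}}+1$, then $\deg P_{d^*} \leq b - ed^* \leq q-2$, and the columns $\beta \in \F_q$ for which $P(X, \beta) \equiv 0$ as a polynomial in $X$ must be zeros of $P_{d^*}$, bounding their number by $b - ed^*$. Each of the remaining $q - b + ed^*$ columns contributes at least $q - d^*$ non-zero entries (since $\deg_X P(X, \beta) \leq d^*$), yielding the bound $\wt \geq (q-b+ed^*)(q-d^*)$. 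This quadratic in $d^*$ is concave down, so its minimum on $\llbracket \widetilde{s}_{\mathbb{A}}+1, a\rrbracket$ occurs at an endpoint; verifying both $(e-u+1)(q-\widetilde{s}_{\mathbb{A}}-1) \geq q-\widetilde{s}_{\mathbb{A}}$ (with $u = b-q+1-e\widetilde{s}_{\mathbb{A}} \in \llbracket 0, e-1\rrbracket$) and $(q-(b-ea))(q-a) \geq q - \widetilde{s}_{\mathbb{A}}$ is the delicate endpoint computation that seals the proof.
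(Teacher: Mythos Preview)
Your argument is correct. The dimension computation and the handling of Cases~1 and~3 match the paper's proof essentially step for step (direct-sum decomposition via linear independence of the $\alpha^{*d}$, inclusion into $\RM(b,2,q)$, and the identification $C_{\mathbb{A},e}(a,b)=\RS_q(a+1)\otimes\F_q^q$ respectively). The upper bounds also agree: the paper exhibits the same codewords, phrased as tensor-product elements rather than split products.

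The genuine difference lies in the Case~2 lower bound. The paper observes that for $d\le\widetilde{s}_{\mathbb{A}}$ the Reed--Solomon factor saturates, and for $d\ge\widetilde{s}_{\mathbb{A}}+1$ one has $\RS_q(b-ed+1)\subset\RS_q((q+\widetilde{s}_{\mathbb{A}})-d)$; summing gives the single inclusion
\[
C_{\mathbb{A},e}(a,b)\ \subset\ \RM(q+\widetilde{s}_{\mathbb{A}}-1,\,2,\,q),
\]
and the bound $q-\widetilde{s}_{\mathbb{A}}$ then drops out of the cited Reed--Muller distance formula with $r=1$, $s=\widetilde{s}_{\mathbb{A}}$. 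Your route instead fixes the leading $X$-degree $d^*$, bounds zero columns by $\deg P_{d^*}\le b-ed^*$, bounds non-zero entries per surviving column by $q-d^*$, and minimises the resulting concave quadratic $(q-b+ed^*)(q-d^*)$ over $\llbracket\widetilde{s}_{\mathbb{A}}+1,a\rrbracket$. Both endpoint checks go through (the one at $d^*=a$ reduces to $(A-1)B\ge\lceil(A-1)/e\rceil$ with $A=q-b+ea\ge 2$, $B=q-a\ge 1$, which is immediate). The paper's argument is shorter and packages the combinatorics into a single known code; yours is more self-contained, effectively re-proving the relevant slice of the Reed--Muller distance without invoking it as a black box.
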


\begin{proof}
Denote $\F_q = \{\alpha_1, \dots, \alpha_q \}$ and for all $d \in \mathbb{N}$ write $\alpha^{*d}= (\alpha_1^d, \dots, \alpha_q^d)$. Since $a \leq q-1$, the family of vectors $( \alpha^{*d} )_{d \in \llbracket 0, a \rrbracket}$ is a basis of $\RS_q(a+1)$ and, thus, linearly independent. For all $d \in \llbracket 1,a \rrbracket$, we have $b-ed+1 \geq q$ if and only if $d \leq \lfloor (b+1-q)/e \rfloor$. Using Proposition \ref{prop:parametreq}, $C_{\mathbb{A},e}(a,b)$ can be expressed as the following direct sum

\[
C_{\mathbb{A},e}(a,b) = (\RS_q(\widetilde{s}_{\mathbb{A}}+1) \otimes \F_q^q) \oplus \bigoplus_{d=\widetilde{s}_{\mathbb{A}}+1}^a  \left[ \langle (\alpha^{*d}) \rangle \otimes \RS_q(b - ed +1) \right] \, .
\]

Whence its dimension $k_{\mathbb{A},e}(a,b)$ is easily computed as

\begin{align*}
k_{\mathbb{A},e}(a,b) &= (\widetilde{s}_{\mathbb{A}} +1)q + \left( \sum_{d= \widetilde{s}_{\mathbb{A}}+1}^a b-ed+1 \right) \\
&= (\widetilde{s}_{\mathbb{A}} + 1)q + (a-\widetilde{s}_{\mathbb{A}}) \left( b+1 - e \left( \frac{a + \widetilde{s}_{\mathbb{A}} +1}{2} \right) \right) \, .
\end{align*}

We will now show that $C_{\mathbb{A},e}(a,b)$ is included in an affine Reed-Muller code over $\mathbb{A}^2$, whose minimum distance will give a lower bound for $d_{\mathbb{A},e}(a,b)$. We distinguish two cases (which correspond to the injective and the non-injective cases, respectively):
\begin{enumerate}

\item Suppose $b < q-1$, then $\widetilde{s}_{\mathbb{A}}=-1$, and since $e \geq 1$, we have 
\begin{align*}
C_{\mathbb{A},e}(a,b) &\subset \bigoplus_{d=0}^{q-1} \left[ \langle (\alpha^{*d}) \rangle \otimes \RS_q(b - d +1) \right] \\
&= \RM(b,2,q) \, .
\end{align*}
By this inclusion, $d_{\mathbb{A},e}(a,b) \geq d(\RM(b,2,q)) = q(q-b)$, and since $\widetilde{C} \coloneqq \langle ( 1, \dots, 1) \otimes \RS_q(b+1) \subset C_{\mathbb{A},e}(a,b)$ with $d(\widetilde{C})=q \times d(\RS_q(b+1)) = q(q-b)$, we obtain the equality \[d_{\mathbb{A},e}(a,b) = q(q-b) \, . \]

\item Suppose $b \geq q-1$, meaning $\widetilde{s}_{\mathbb{A}}= \min\{ \lfloor \frac{b+1-q}{e} \rfloor , a \} \geq 0$. If $\widetilde{s}_{\mathbb{A}} = a$ then $C_{\mathbb{A},e}(a,b) = \RS_q(a+1) \otimes \F_q^q$ and $d_{\mathbb{A},e}(a,b) = q-a$. Suppose now that $\widetilde{s}_{\mathbb{A}} = \lfloor \frac{b+1-q}{e} \rfloor$. By construction of $\widetilde{s}_{\mathbb{A}}$, for all $d \in \llbracket 0,\widetilde{s}_{\mathbb{A}} \rrbracket$ we have $\RS_q(b-ed+1) = \F_q^q$, and $\RS_q((b+1)-e(\widetilde{s}_{\mathbb{A}}+1)) \subset \RS_q(q-1) = \RS_q((q +\widetilde{s}_{\mathbb{A}}) - (\widetilde{s}_{\mathbb{A}}+1))$. Since $e \geq 1$, we get
\begin{align*}
C_{\mathbb{A},e}(a,b) &\subset \bigoplus_{d=0}^{q-1} \left[ \langle (\alpha^{*d}) \rangle \otimes \RS_q((q + \widetilde{s}_{\mathbb{A}})  - d) \right] \\
&= \RM(q+\widetilde{s}_{\mathbb{A}}-1,2,q) \, .
\end{align*}
Since $0 \leq \widetilde{s}_{\mathbb{A}} \leq a \leq q-1$, one deduces that
\[
d_{\mathbb{A},e}(a,b) \geq d(\RM_q(q-1+ \widetilde{s}_{\mathbb{A}})) \geq q - \widetilde{s}_{\mathbb{A}} \, .
\]
Finally, as $\RS_q(\widetilde{s}_{\mathbb{A}}+1) \otimes \F_q^q \subset C_{\mathbb{A},e}(a,b)$, we conclude
\[
d_{\mathbb{A},e}(a,b) = q-\widetilde{s}_{\mathbb{A}} \, .
\]
\end{enumerate}
\end{proof}

 We end this subsection by giving the size of the kernels for the evaluation and the puncturing morphisms, from which we deduce the difference of dimensions between the duals of $C_e(a,b)$ and $C_{\mathbb{A},e}(a,b)$. Recall that for a fixed $(a,b) \in \mathbb{N}^2$ we defined two integers $\widetilde{s}$ and $\widetilde{s}_\mathbb{A}$ in Propositions \ref{prop:paramcode} and \ref{prop:parametreq}, respectively:

\[
s \coloneqq \frac{b-q}{e} \, , \text{ and } \widetilde{s} \coloneqq \left\{ 
\begin{array}{l} 
\min \{ \lfloor s \rfloor , a \} \text{ if } s \geq 0 \\
-1 \text{ otherwise}
\end{array}
\right. \, ;
\]
\[ s_{\mathbb{A}} \coloneqq \frac{b+1-q}{e} \, , \text{ and } \widetilde{s}_{\mathbb{A}} \coloneqq \left\{ \begin{array}{c} \min\{ \lfloor s_{\mathbb{A}} \rfloor, a \} \text{ if } s_{\mathbb{A}} \geq 0 \\ -1 \text{ otherwise } \end{array} \right. \, . \]
They both allow us to measure how the evaluation morphisms from $\Lis(aS_e+bF_e)$ fail to be injective, with $\widetilde{s}$ being strictly negative if and only if $\ev_{\mathcal{H}_e}$ is injective, and the same for $\widetilde{s}_\mathbb{A}$ and $\ev_{U_{e,q}}$. 
\begin{proposition}\label{coroq}
 With the same notations as in Proposition \ref{prop:parametreq}, the kernel of the evaluation morphism $\ev_{U_{e,q}}: \Lis(aS_e+bF_e) \twoheadrightarrow C_{\mathbb{A},e}(a,b)$ has dimension
\[
\dim \ker(\ev_{U_{e,q}}) = (\widetilde{s}_{\mathbb{A}}+1)\left(b-q +1-\frac{e\widetilde{s}_{\mathbb{A}}}{2} \right) \, .
\]
Consider the puncturing morphism at the coordinates of $S_e \cup F_e$
\[
\pi_{U_{e,q}}: C_e(a,b)  \twoheadrightarrow  C_{\mathbb{A},e}(a,b) \, .
\]
Then, $\dim \ker(\pi_{U_{e,q}}) = \widetilde{s} + 1$. 
\end{proposition}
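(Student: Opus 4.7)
The plan is to derive both equalities from dimension counts, leveraging the explicit direct-sum decompositions of $\Lis(aS_e+bF_e)$, $C_e(a,b)$ and $C_{\mathbb{A},e}(a,b)$ already produced in Propositions \ref{prop:RRHirz}, \ref{prop:paramcode} and \ref{prop:parametreq}. No new geometric input is needed; the argument is purely combinatorial, in the same spirit as Corollary \ref{coro}.

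For $\dim \ker \ev_{U_{e,q}}$ I would follow the template of Corollary \ref{coro}. Proposition \ref{prop:RRHirz} gives $\dim \Lis(aS_e+bF_e) = \sum_{d=0}^a (b-ed+1)$, while the proof of Proposition \ref{prop:parametreq} rewrites $k_{\mathbb{A},e}(a,b) = (\widetilde{s}_{\mathbb{A}}+1)q + \sum_{d=\widetilde{s}_{\mathbb{A}}+1}^a (b-ed+1)$. Subtracting these two quantities, collapsing the telescoping sum $\sum_{d=0}^{\widetilde{s}_{\mathbb{A}}} d = \tfrac{\widetilde{s}_{\mathbb{A}}(\widetilde{s}_{\mathbb{A}}+1)}{2}$, and factoring yields the desired $(\widetilde{s}_{\mathbb{A}}+1)\bigl(b-q+1-e\widetilde{s}_{\mathbb{A}}/2\bigr)$ in a handful of lines.

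For $\dim \ker \pi_{U_{e,q}}$, note that $\pi_{U_{e,q}}: C_e(a,b) \twoheadrightarrow C_{\mathbb{A},e}(a,b)$ is surjective by definition, so by rank--nullity the kernel has dimension $k_e(a,b) - k_{\mathbb{A},e}(a,b)$. Plugging in the two dimension formulas reduces the claim to verifying
\[
(\widetilde{s}+1)(q+1) - (\widetilde{s}_{\mathbb{A}}+1)q + \sum_{d=\widetilde{s}+1}^{\widetilde{s}_{\mathbb{A}}}(b-ed+1) = \widetilde{s}+1.
\]
The delicate step, and the only one, is the comparison between $\widetilde{s}$ and $\widetilde{s}_{\mathbb{A}}$. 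Since $s_{\mathbb{A}} = s + 1/e$ with $e \geq 1$ and both quantities are truncated by the same value $a$, one checks without difficulty that $\widetilde{s}_{\mathbb{A}} - \widetilde{s} \in \{0,1\}$. If the difference is $0$ the sum is empty and the identity is immediate. If it is $1$, then the defining inequalities $s < \widetilde{s}_{\mathbb{A}} \leq s_{\mathbb{A}}$ translate to $b - e\widetilde{s}_{\mathbb{A}} + 1 = q$; the single extra term in the sum contributes exactly $q$, and the identity collapses once more to $\widetilde{s}+1$. I expect the bookkeeping around this boundary case to be the main (and only) obstacle; alternatively, one could bypass the case analysis by exploiting the factorisation $\ev_{U_{e,q}} = \pi_{U_{e,q}} \circ \ev_{\mathcal{H}_e}$ to write $\dim \ker \pi_{U_{e,q}} = \dim \ker \ev_{U_{e,q}} - \dim \ker \ev_{\mathcal{H}_e}$ and invoke both the first part and Corollary \ref{coro} directly, though the same boundary identity $b - e\widetilde{s}_{\mathbb{A}} + 1 = q$ reappears in the verification.
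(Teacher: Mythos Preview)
Your proposal is correct and essentially follows the paper's own proof. The paper also obtains $\dim\ker\ev_{U_{e,q}}$ by subtracting $k_{\mathbb{A},e}(a,b)$ from $\dim\Lis(aS_e+bF_e)$ exactly as you do, and for $\dim\ker\pi_{U_{e,q}}$ it opts for your ``alternative'' route via the factorisation $\ev_{U_{e,q}}=\pi_{U_{e,q}}\circ\ev_{\mathcal{H}_e}$ together with Corollary~\ref{coro}, then splits into the same two cases $\widetilde{s}_{\mathbb{A}}=\widetilde{s}$ and $\widetilde{s}_{\mathbb{A}}=\widetilde{s}+1$, using in the latter case the identity $e\widetilde{s}_{\mathbb{A}}=b-q+1$ that you also single out.
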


\begin{proof}
Using the same method as in the proof of Corollary \ref{coro}, the difference of dimensions between $\Lis(aS_e+bF_e)$ and $C_{\mathbb{A},e}(a,b)$ is the sum
\[
\dim \ker(\ev_{U_{e,q}}) = \sum_{d=0}^{\widetilde{s}_\mathbb{A}} \left( b - ed + 1 - q \right)
  = (\widetilde{s}_{\mathbb{A}} + 1) \left( b - q + 1 - \frac{e \widetilde{s}_{\mathbb{A}} }{2} \right) \, .
\]
Consider now the surjective evaluation morphism $\ev_{\mathcal{H}_e}: \Lis(aS_e+bF_e) \twoheadrightarrow C_e(a,b)$ whose dimension for the kernel is computed in Corollary \ref{coro} as $\dim \ker(ev_{\mathcal{H}_e}) = (\widetilde{s}+1)\left(b-q-\frac{e\widetilde{s}}{2} \right)$. By the following commutative diagram
\begin{center}
\begin{tikzcd}
& C_e(a,b) \arrow[dd, "\pi_{U_{e,q}}",twoheadrightarrow] \\
\Lis(aS_e + bF_e) \arrow[ur, "ev_{\mathcal{H}_e}",twoheadrightarrow] \arrow[dr, "ev_{U_{e,q}}",twoheadrightarrow]
	 & \\
& C_{\mathbb{A},e}(a,b) 
\end{tikzcd}
\end{center}
we see that $\dim \ker(\pi_{U_{e,q}}) = \dim \ker( \ev_{U_{e,q}}) - \dim \ker( \ev_{\mathcal{H}_e})$. We now distinguish two cases.
\begin{enumerate}
\item Suppose $\widetilde{s}_\mathbb{A} = \widetilde{s}$, then $\dim \ker( \ev_{U_{e,q}}) = \dim \ker( \ev_{\mathcal{H}_e}) + (\widetilde{s} +1)$, and thus \[ \dim \ker(\pi_{U_{e,q}}) = \widetilde{s} + 1 \, . \]
\item Suppose $\widetilde{s}_\mathbb{A} \neq \widetilde{s}$. It is the case if and only if $\widetilde{s}_\mathbb{A} = \widetilde{s} + 1$, meaning $\widetilde{s}_\mathbb{A} = \frac{b-q+1}{e} \in \mathbb{N}$ and $e\widetilde{s} = b-q+1-e$. Injecting these values we get
\begin{align*}
\dim \ker(\ev_{U_{e,q}}) &= (\widetilde{s}_\mathbb{A} +1) \left( \frac{b-q+1}{2} \right) \, ; \\
\dim \ker(\ev_{\mathcal{H}_e}) &= \widetilde{s}_\mathbb{A} \left(\frac{b-q-1+e}{2}   \right) \, .
\end{align*}
We can now easily conclude
\begin{align*}
\dim \ker (\pi_{U_{e,q}}) &= \widetilde{s}_\mathbb{A} \left( 1 - \frac{e}{2} \right) + \frac{b-q+1}{2}\\
&= \widetilde{s}_\mathbb{A} + \frac{b-q+1-e\widetilde{s}_\mathbb{A}}{2} = \widetilde{s} +1 \, .
\end{align*}
\end{enumerate}
\end{proof}

\begin{corollary}\label{diffdimdual}
With the same notations as in Proposition \ref{coroq}, we have
\[
\dim(C_e(a,b)^\perp) = \dim(C_{\mathbb{A},e} (a,b)^\perp) + 2q - \widetilde{s} \, .
\]
\end{corollary}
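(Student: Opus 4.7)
The statement is a straightforward dimension count, so I would simply unwind the definitions. The plan is to use the rank-nullity theorem applied to the puncturing map $\pi_{U_{e,q}} : C_e(a,b) \twoheadrightarrow C_{\mathbb{A},e}(a,b)$ from Proposition \ref{coroq}, whose kernel has dimension $\widetilde{s}+1$, and then translate this into a relation between dual dimensions using $\dim C^\perp = n - \dim C$.

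Concretely, first I would write
\[
\dim(C_e(a,b)^\perp) = (q+1)^2 - k_e(a,b), \qquad \dim(C_{\mathbb{A},e}(a,b)^\perp) = q^2 - k_{\mathbb{A},e}(a,b),
\]
so that
\[
\dim(C_e(a,b)^\perp) - \dim(C_{\mathbb{A},e}(a,b)^\perp) = (2q+1) - \bigl(k_e(a,b) - k_{\mathbb{A},e}(a,b)\bigr).
\]
Next, since $\pi_{U_{e,q}}$ is surjective by definition of a puncturing, the rank-nullity theorem combined with Proposition \ref{coroq} gives $k_e(a,b) - k_{\mathbb{A},e}(a,b) = \dim \ker(\pi_{U_{e,q}}) = \widetilde{s}+1$. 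Substituting yields $2q - \widetilde{s}$, which is exactly the claim.

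There is no real obstacle here: the work was already done in Proposition \ref{coroq}, which computed the dimension of the kernel of the puncturing through the commutative diagram relating $\ev_{\mathcal{H}_e}$, $\ev_{U_{e,q}}$ and $\pi_{U_{e,q}}$. The corollary is just the arithmetic consequence of that computation on the dual side. I would therefore present the proof as a three-line calculation, making explicit only the appeal to Proposition \ref{coroq} for $\dim \ker(\pi_{U_{e,q}}) = \widetilde{s}+1$.
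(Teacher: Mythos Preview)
Your proposal is correct and matches the paper's proof essentially line for line: the paper also writes $\dim(C_e(a,b)^\perp) = (q+1)^2 - \dim(C_e(a,b))$ and $\dim(C_{\mathbb{A},e}(a,b)^\perp) = q^2 - \dim(C_{\mathbb{A},e}(a,b))$, then invokes Proposition~\ref{coroq} to get $\dim(C_e(a,b)) = \dim(C_{\mathbb{A},e}(a,b)) + \widetilde{s}+1$ and concludes.
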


\begin{proof}
We have $\dim(C_e(a,b)^\perp) = (q+1)^2 - \dim(C_e(a,b))$ and $\dim(C_{\mathbb{A},e}(a,b)^\perp) = q^2 - \dim(C_{\mathbb{A},e}(a,b))$. From Proposition \ref{coroq}, we have $\dim(C_e(a,b)) = \dim(C_{\mathbb{A},e}(a,b)) + \widetilde{s} + 1$, which allows us to conclude.
\end{proof}

\subsection{Dual and dual distance of $C_{\mathbb{A},e}(a,b)$}

Removing the points at infinity and allowing the codes to be included in each other makes it easier to study the dual of $C_{\mathbb{A},e}(a,b)$. For example, we can compute its minimum distance and even an explicit form, as shown in the two following theorems.

\begin{theorem}\label{prop:dualdistanceq}
Let $(a,b) \in \mathbb{N}^2$ such that $1 \leq a \leq q-2$ and $0 \leq b-ea \leq q-2$. Denote $d_{\mathbb{A},e}(a,b)^\perp$ the minimum distance of $C_{\mathbb{A},e}(a,b)^\perp$.
Then 
\[
\min\{ a,b-ea \} +2 \leq d_{\mathbb{A},e}(a,b)^\perp \leq \min\{ a, b \} +2 \, .
\] 
\end{theorem}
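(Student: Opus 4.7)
The plan is to follow the blueprint of the proof of Theorem \ref{prop:dualdistance}, exploiting the explicit form of $C_{\mathbb{A},e}(a,b)$ from Proposition \ref{prop:codeq}. For the upper bound, I will identify specific restrictions of the code to classical Reed--Solomon codes. For the lower bound, I will bracket $C_{\mathbb{A},e}(a,b)$ from below by a clean tensor-product subcode whose dual distance can be read off directly from Proposition \ref{prop:checkproduct}.

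For the upper bound, fix $\beta_0 \in \F_q$ and consider the puncturing of $C_{\mathbb{A},e}(a,b)$ at all coordinates except those of the row $\{(\alpha, \beta_0) : \alpha \in \F_q\}$. Since evaluating each monomial $X_2^d T_2^{c_2}$ at such a point yields a scalar multiple of $(\alpha_1^d, \dots, \alpha_q^d)$, this puncturing is exactly $\RS_q(a+1)$. As $a \leq q-2$, its dual $\RS_q(q-a-1)$ is non-trivial with minimum distance $a+2$, and extending a minimum-weight codeword by zeros on the complement row yields a codeword of $C_{\mathbb{A},e}(a,b)^\perp$ of weight $a+2$. The analogous restriction to a column gives $\RS_q(b+1)$ and a dual codeword of weight $b+2$ when $b \leq q-2$; when $b > q-2$ one has $\min\{a,b\} = a \leq q-2$, so the row bound alone achieves $\min\{a,b\} + 2$.

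For the lower bound, the key observation is that since $\RS_q(b-ea+1) \subset \RS_q(b-ed+1)$ for every $d \in \llbracket 0, a \rrbracket$, the subcode
\[ \widetilde{C} \coloneqq \RS_q(a+1) \otimes \RS_q(b-ea+1) \]
is contained in $C_{\mathbb{A},e}(a,b)$, so $C_{\mathbb{A},e}(a,b)^\perp \subset \widetilde{C}^\perp$. By Proposition \ref{prop:checkproduct},
\[ \widetilde{C}^\perp = \RS_q(q-a-1) \boxplus_{\F_q} \RS_q(q-b+ea-1) \]
has minimum distance $\min\{a+2,\, b-ea+2\} = \min\{a, b-ea\} + 2$, which transfers immediately to $d_{\mathbb{A},e}(a,b)^\perp$. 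Unlike in the projective situation of Theorem \ref{prop:dualdistance}, there is no need to invoke Lemma \ref{lem:lemmetechnique} for a final refinement: the nesting $\RS_q(k_1) \subset \RS_q(k_2)$ (for $k_1 \leq k_2$) lets the subcode be chosen as a clean tensor product of Reed--Solomon codes, and Proposition \ref{prop:checkproduct} directly yields the optimal $+2$ rather than a $+1$. The main point of vigilance is to use the hypotheses $a \leq q-2$ and $b-ea \leq q-2$ carefully, precisely to ensure that both $\RS_q(q-a-1)$ and $\RS_q(q-b+ea-1)$ have strictly positive dimension with their minimum distances equal to $a+2$ and $b-ea+2$ respectively.
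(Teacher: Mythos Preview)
Your proof is correct. The lower bound matches the paper's argument exactly: both use the subcode $\RS_q(a+1)\otimes\RS_q(b-ea+1)$ and read off the dual distance from Proposition~\ref{prop:checkproduct}.

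For the upper bound you take a slightly different route. You puncture to a single row or column and extend a minimum-weight dual codeword by zeros, just as in the projective case (Theorem~\ref{prop:dualdistance}). The paper instead observes that in the affine setting one also has the \emph{upper} inclusion $C_{\mathbb{A},e}(a,b)\subset \RS_q(a+1)\otimes\RS_q(b+1)$, so that dualizing gives the two-sided sandwich
\[
\RS_q(q-a-1)\boxplus_{\F_q}\RS_q(q-b-1)\ \subset\ C_{\mathbb{A},e}(a,b)^\perp\ \subset\ \RS_q(q-a-1)\boxplus_{\F_q}\RS_q(q-b+ea-1),
\]
and both bounds drop out of a single application of Proposition~\ref{prop:checkproduct}. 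This is a little cleaner and avoids the case split on whether $b\leq q-2$; your puncturing argument is equally valid but does not exploit the nesting of affine Reed--Solomon codes that you yourself point out makes the affine situation simpler.
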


\begin{proof}
From the explicit form given in Proposition \ref{prop:codeq}, since for all $d \in \llbracket 0,a \rrbracket$ we have $\RS_q(b-ea+1) \subset \RS_q(b-ed+1) \subset \RS_q(b+1)$, the space $C_{\mathbb{A},e}(a,b)$ is framed as follow
\[
\left[ \RS_q(a+1) \otimes \RS_q(b-ea+1) \right]  \subset C_{\mathbb{A},e}(a,b) \subset \left[ \RS_q(a+1) \otimes \RS_q(b+1) \right] \, .
\]
By taking the duals we obtain the inclusions
\begin{align*}
\RS_q(q-a-1) \boxplus_{\F_q} \RS_q(q-b+ea-1) &\supset C_{\mathbb{A},e}(a,b)^\perp \\
&\supset \RS_q(q-a-1) \boxplus_{\F_q} \RS_q(q-b-1) \, .
\end{align*}
Finally, we conclude by Proposition \ref{prop:checkproduct} that
\[
\min\{ a,b-ea \} +2 \leq d_{\mathbb{A},e}(a,b)^\perp \leq \min\{ a, b \} +2 \, .
\] 
\end{proof}

\begin{theorem}\label{theo:dualq}
Let $a,b \in \mathbb{N}$, with $b-ea \geq 0$. Let us denote $C_1 \coloneqq C_{\mathbb{A},e}((q-1)\sigma_e + (q-2-b)F_e)$ and $C_2 \coloneqq C_{\mathbb{A},e}((q-2-a)\sigma_e + (q-1)F_e)$. Then
\[
C_{\mathbb{A},e}(a,b)^\perp = C_1 + C_2 \, .
\]
\end{theorem}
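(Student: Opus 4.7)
The plan is to prove the inclusion $C_1+C_2\subseteq C_{\mathbb{A},e}(a,b)^\perp$ by a direct orthogonality check on pure tensor generators, and then to upgrade it to equality via a dimension count. Throughout I use the explicit tensor-sum forms of $C_1$, $C_2$ and $C_{\mathbb{A},e}(a,b)$ provided by Proposition~\ref{prop:codeq}, together with the tensor-product duality from Proposition~\ref{prop:checkproduct}.

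The easier inclusion is $C_2\subseteq C_{\mathbb{A},e}(a,b)^\perp$. Unwinding $C_2=C_{\mathbb{A},e}(q-2-a,\,q-1+e(q-2-a))$ with Proposition~\ref{prop:codeq}, every Reed--Solomon factor in its decomposition has parameter $q+e(q-2-a-d)\geq q$ for $d\in\llbracket 0,q-2-a\rrbracket$ and therefore equals $\F_q^q$, collapsing $C_2$ into the pure tensor product $\RS_q(q-1-a)\otimes\F_q^q$. By Proposition~\ref{prop:checkproduct}, this is exactly the dual of $\RS_q(a+1)\otimes\F_q^q$, which visibly contains $C_{\mathbb{A},e}(a,b)$ since every generator of $C_{\mathbb{A},e}(a,b)$ is a matrix whose rows are scalar multiples of some $\alpha^{*d}$ with $d\leq a$.

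For the inclusion $C_1\subseteq C_{\mathbb{A},e}(a,b)^\perp$ I would compute the Euclidean pairing of pure tensor generators. For $\alpha^{*d}\otimes\alpha^{*c}\in C_1$ (with $d\in\llbracket 0,q-1\rrbracket$ and $c\in\llbracket 0,\,q-2-b+e(q-1-d)\rrbracket$) and $\alpha^{*d'}\otimes\alpha^{*c'}\in C_{\mathbb{A},e}(a,b)$ (with $d'\leq a$, $c'\leq b-ed'$), the pairing factors as $\bigl(\sum_{\alpha\in\F_q}\alpha^{c+c'}\bigr)\bigl(\sum_{\beta\in\F_q}\beta^{d+d'}\bigr)$. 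The power-sum identity---$\sum_{\alpha\in\F_q}\alpha^k=-1$ when $(q-1)\mid k$ with $k>0$, and $0$ otherwise---makes the $\beta$-sum non-zero only when $d+d'=q-1$, since $d+d'\leq(q-1)+a\leq 2q-3$. Substituting $d'=q-1-d$ into the constraints then yields $c+c'\leq(q-2-b+ed')+(b-ed')=q-2<q-1$, which forces the $\alpha$-sum to vanish as well.

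To conclude, I would match dimensions. Setting $\delta:=\lceil(b+1)/e\rceil$, the explicit decomposition of $C_1$ gives $\dim C_1=(q-\delta)q+\sum_{k=0}^{\delta-1}(q-1-b+ek)$, while $\dim C_2=q(q-1-a)$ is immediate and $\dim C_{\mathbb{A},e}(a,b)=(a+1)(b+1)-ea(a+1)/2$ follows from Proposition~\ref{prop:parametreq}. Reading off $\dim(C_1\cap C_2)$ from the explicit decompositions and splitting on whether $\delta=a+1$ (equivalently $b-ea<e$) or $\delta\geq a+2$ (equivalently $b-ea\geq e$), a routine arithmetic manipulation produces $\dim(C_1+C_2)=q^2-\dim C_{\mathbb{A},e}(a,b)=\dim C_{\mathbb{A},e}(a,b)^\perp$. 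The main obstacle is precisely this final combinatorial bookkeeping. A conceptually cleaner alternative is to identify $\F_q^{q^2}\cong\F_q[t,x]/(t^q-t,\,x^q-x)$ via evaluation at $\F_q^2$: then all four codes become coordinate subspaces in the monomial basis $\{t^Cx^D\}_{(C,D)\in\llbracket 0,q-1\rrbracket^2}$, and the inclusion already proven becomes equality by checking that the supports of $C_1$ and $C_2$ together exhaust the support of $C_{\mathbb{A},e}(a,b)^\perp$---where the same case split reappears.
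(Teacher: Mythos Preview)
Your argument is correct and follows the paper's strategy: establish $C_1+C_2\subseteq C_{\mathbb{A},e}(a,b)^\perp$ on pure-tensor generators, then match dimensions. Your power-sum computation is equivalent to the paper's case split via Reed--Solomon duality (the paper's cases $d'\leq q-2-d$ and $d'\geq q-1-d$ are exactly your $d+d'\neq q-1$ and $d+d'=q-1$). Two minor points. First, your bound $d+d'\leq 2q-3$ tacitly uses $a\leq q-2$, which is not a hypothesis; the fix is immediate, since for $d+d'=m(q-1)$ with $m\geq 2$ the constraints force $c+c'\leq q-2+e(q-1-d-d')<0$, so that case is vacuous. Second, the paper's dimension count is tidier than your inclusion--exclusion: it writes $C_1+C_2=\bigoplus_{d=0}^{q-1}\bigl[\langle\alpha^{*d}\rangle\otimes F_d\bigr]$ with $F_d=\F_q^q$ for $d\leq q-2-a$ and $F_d=\RS_q(q-1-b+e(q-1-d))$ otherwise, then sums $\dim F_d$ directly and compares to Proposition~\ref{prop:parametreq}---no need to compute $C_1\cap C_2$ or split on $\delta$. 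This is precisely the monomial-support alternative you sketch at the end, so you already have the cleaner route in hand.
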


\begin{proof}
We first show the right to left inclusion. By Proposition \ref{prop:codeq}, since $(q-2-a)\sigma_e + (q-1)F_e \simeq (q-2-a)S_e + ((q-1)+e(q-1-a))F_e$, the code $C_2$ has a simple explicit form
\begin{align*}
C_2 = \sum_{d=0}^{q-2-a}  \left[ \langle (\alpha_1^d, \dots, \alpha_q^d) \rangle \otimes \F_q^q \right] &= \RS_q(q-a-1) \otimes \F_q^q \\
&\subset  \RS_q(q-a-1) \boxplus_{\F_q} \RS_q(q-b-1) \, .  
\end{align*}
We saw in the proof of Theorem \ref{prop:dualdistanceq} that $\RS_q(q-a-1) \boxplus_{\F_q} \RS_q(q-b-1) \subset C_{\mathbb{A},e}(a,b)^\perp$, and thus $C_2 \subset C_{\mathbb{A},e}(a,b)^\perp$.

Take $M'=X^{d'}T^{c'} \in \Lrr((q-1)\sigma_e + (q-2-b)F_e)(\{X_1T_1 \neq 0 \})$ and $M=X^dT^c \in \Lrr(aS_e + bF_e)(\{X_1T_1 \neq 0 \})$, \ie with
\[
 \left\{
\begin{array}{c}
(q-b-2)+e(q-1-d') \geq c',\\
q-1 \geq d'
\end{array}
\right. \text{ and }
 \left\{
\begin{array}{c}
b-ed \geq c,\\
a \geq d
\end{array}.
\right.  
\]
Let us denote $w' \coloneqq \ev_{U_{e,q}}(M')$ and $w \coloneqq \ev_{U_{e,q}}(M)$. Since $w'$ and $w$ run through generating families of  $C_1$ and $C_{\mathbb{A},e}(a,b)$, respectively, we want to show that $ w \cdot w'  = 0$. Two cases arise:
\begin{enumerate}
\item Suppose $d' \leq q-2-d$. Then, looking at the rows, we see that $w' \in \RS_q(d'+1) \otimes \F_q^q \subset \RS_q(q-2-d) \otimes \F_q^q$ and $w \in \RS_q(d+1) \otimes \F_q^q = \left( \RS_q(q-2-d) \otimes \F_q^q \right)^\perp$, so $ w \cdot w'  = 0$.
\item Suppose $d' \geq q-1-d$. Then, $c' \leq (q-b-2)+e(q-1-d') \leq q-2-c$, and ,by the same argument as in the previous case, by looking at the columns, one gets $ w \cdot w'  = 0$. 
\end{enumerate}
We just showed
\[
C_1 + C_2 \subset C_{\mathbb{A},e}(a,b)^\perp \, .
\]

We now show the equality by the dimensions. Using the explicit forms of $C_1$ and $C_2$ by Proposition \ref{prop:codeq}, and since $((\alpha_1^d, \dots, \alpha_q^d))_{d \in \llbracket 0,q-1 \rrbracket}$ forms a basis of $\F_q^q$, $C_1+C_2$ can we written in a direct sum
\[
C_1 + C_2 = \bigoplus_{d=0}^{q-1} \left[ \langle (\alpha_1^d, \dots, \alpha_q^d) \rangle \otimes F_d \right],
\]
where
\[
\forall d \in \llbracket 0,q-1 \rrbracket, \, F_d = \left\{
\begin{array}{c r}
\F_q^q &\text{ if } d \leq q-2-a,\\
\RS_q(q-1-b+e(q-1-d)) &\text{ if } d > q-2-a
\end{array}
\right. \, .
\]
Recall that for all $k \leq 0$, $\RS_q(k) = \{ 0 \}$. Using Proposition \ref{prop:parametreq}, some straightforward calculations show that
\[
\dim (C_1 + C_2) = \sum_{d=0}^{q-1} \dim (F_q) = q^2 - \dim C_{\mathbb{A},e}(a,b) = \dim C_{\mathbb{A},e}(a,b)^\perp \, .
\]
\end{proof}

\section{An explicit form of the dual}\label{section:4}
This section is solely devoted to giving an explicit form of the dual $C_e(a,b)^\perp$. Recall that in Theorem \ref{theo:dualq}, we were able to give the dual $C_{\mathbb{A},e}(a,b)^\perp$ of the puncturing of $C_e(a,b)$ at the points of $S_e \cup F_e$, \ie on its first row and first column. Extending every codeword $m \in C_{\mathbb{A},e}(a,b)^\perp$ by zero coordinates on the first row and first column gives the exact set of \emph{every} codeword $\overline{m} \in C_e(a,b)^\perp$ that have zero coordinates on the first row and first column. Denote this set by $\overline{C_{\mathbb{A},e}(a,b)^\perp}$, then 
\[
\overline{C_{\mathbb{A},e}(a,b)^\perp} \subset C_e(a,b)^\perp \, ,  \] and by Corollary \ref{diffdimdual}, \[ \dim(C_e(a,b)^\perp) - \dim \left( \overline{C_{\mathbb{A},e}(a,b)^\perp} \right) = 2q - \widetilde{s} \leq 2q +1 \, .
\]
We will give an explicit form of $C_e(a,b)^\perp$ by completing any basis of $\overline{C_{\mathbb{A},e}(a,b)^\perp}$ with $2q - \widetilde{s}$ suitable elements. To this end, we begin by formalizing the extension of a code $C_{\mathbb{A},e}(a,b)$ to the length $(q+1)^2$ by adding zero coordinates.

\begin{definition}
Let $(a,b) \in \mathbb{N}^2$. We define the code $\overline{C_{\mathbb{A},e}}(a,b)$ of length $(q+1)^2$ to be the set of all codewords of $C_{\mathbb{A},e}(a,b)$ extended by zero coordinates on the first row and first column. 

We will also denote $\overline{C_{\mathbb{A},e}}(a\sigma_e + bF_e) \coloneqq \overline{C_{\mathbb{A},e}}(a,b-ea)$.
\end{definition}

\begin{theorem}\label{theo:dual}
Let $a,b \in \mathbb{N}$ such that $a \leq q-2$ and $0 \leq b-ea$. Let $\F_q = (\alpha_1, \dots, \alpha_q)$ and, for all $k \in \mathbb{N}$, denote $\alpha^{*k} \coloneqq (\alpha_1^k, \dots, \alpha_q^k)$. We set
\[ s \coloneqq \frac{b-q}{e} \, \text{ and } \, \widetilde{s} \coloneqq \left\{ \begin{array}{c} \min\{ \lfloor s \rfloor, a \} \text{ if } s \geq 0, \\ -1 \text{ otherwise } \end{array} \right. \, . \]
Define for all $d \in \llbracket 1 + \widetilde{s},a \rrbracket$ the vector $v_d \coloneqq (0, \alpha^{*(q-1-d)}) \otimes (1, \alpha^{*(q-b+ed)}) \in \F_q^{(q+1)^2}$.
Define three codes of length $(q+1)^2$ by:
\begin{align*}
C_1 &\coloneqq \overline{C_{\mathbb{A},e}}((q-1)\sigma_e + (q-2-b)F_e) \, , \\
C_2 &\coloneqq \PRS_q(q-a) \otimes \F_q^{q+1} \, , \\
C_3 &\coloneqq \langle (v_d)_{d \in \llbracket 1 + \widetilde{s},a \rrbracket} \rangle \, . 
\end{align*}
Then the dual of $C_e(a,b)$ has the explicit form
\[
C_e(a,b)^\perp = C_1 + C_2 + C_3 \, .
\]
\end{theorem}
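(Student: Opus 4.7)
The plan is to establish the equality $C_e(a,b)^\perp = C_1 + C_2 + C_3$ by first proving the inclusion $C_1 + C_2 + C_3 \subset C_e(a,b)^\perp$ and then matching dimensions. Corollary~\ref{diffdimdual} pins down $\dim C_e(a,b)^\perp = \dim \overline{C_{\mathbb{A},e}(a,b)^\perp} + 2q - \widetilde{s}$, so on the dimension side the aim is to show that $C_1 + C_2$ contains $\overline{C_{\mathbb{A},e}(a,b)^\perp}$ with an excess of $2q - a$ dimensions (coming from codewords with nonzero first row or first column) and that $C_3$ supplies the remaining $a - \widetilde{s}$ dimensions independently.

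For the orthogonality step, I would treat $C_1$, $C_2$, $C_3$ separately. The inclusion $C_1 \subset C_e(a,b)^\perp$ is immediate: Theorem~\ref{theo:dualq} places $C_{\mathbb{A},e}((q-1)\sigma_e + (q-2-b)F_e)$ inside $C_{\mathbb{A},e}(a,b)^\perp$, and extending by zeros on $S_e \cup F_e$ then lands inside $C_e(a,b)^\perp$ because those coordinates do not enter the pairing with the $U_{e,q}$-supported part of $C_e(a,b)$. For $C_2$, Proposition~\ref{prop: Code Hirzebruch} shows that every row of a codeword of $C_e(a,b)$ lies in $\PRS_q(a+1)$; since rows of codewords of $C_2$ lie in $\PRS_q(q-a) = \PRS_q(a+1)^\perp$, the row-by-row inner products vanish. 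For $C_3$, the key observation is that each $v_d$ is itself the evaluation on $\mathcal{H}_e(\F_q)$ of a specific monomial $M_d \in \F_q[X_1,X_2,T_1,T_2]$ containing the factor $X_1$. The inner product $v_d \cdot \ev(w)$ with a basis monomial $w = X_1^{d_1'} X_2^{a-d_1'} T_1^{c_1'} T_2^{c_2'}$ of $\Lis(aS_e + bF_e)$ then becomes $\sum_{P} \ev(M_d \, w)(P)$, which I would split along the stratification $\mathcal{H}_e(\F_q) = \{Q_e\} \sqcup (S_e \setminus \{Q_e\}) \sqcup (F_e \setminus \{Q_e\}) \sqcup U_{e,q}$. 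The strata meeting $S_e$ (and $Q_e$) kill the sum thanks to the $X_1$ factor, and the remaining strata yield character sums evaluated via the standard identity $\sum_{\alpha \in \F_q} \alpha^k = -1$ when $(q-1) \mid k$ and $k \geq 1$, and $0$ otherwise.

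For the dimension count, Theorem~\ref{theo:dualq} extended by zeros yields $\overline{C_{\mathbb{A},e}(a,b)^\perp} = C_1 + \overline{\RS_q(q-a-1) \otimes \F_q^q}$, and the second summand fits inside $C_2$ because its rows belong to $(0, \RS_q(q-a-1)) \subset \PRS_q(q-a)$; hence $\overline{C_{\mathbb{A},e}(a,b)^\perp} \subset C_1 + C_2$. The excess $\dim(C_1+C_2) - \dim \overline{C_{\mathbb{A},e}(a,b)^\perp}$ comes from codewords of $C_2$ with nonzero first row (any element of $\PRS_q(q-a)$, contributing $q-a$) or nonzero entries in the first column for rows $2$ through $q+1$ (arbitrary, contributing $q$), totalling $2q - a$. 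For the remaining $a - \widetilde{s}$ dimensions, the first rows of the $v_d$'s are $(0, \alpha^{*(q-1-d)})$ with exponents $q-1-d \in \llbracket q-1-a, q-2-\widetilde{s} \rrbracket$; since $(\alpha^{*k})_{k \in \llbracket 0, q-1 \rrbracket}$ is a basis of $\F_q^q$ and that exponent range is disjoint from $\llbracket 0, q-a-2 \rrbracket$ (the exponents spanning $\RS_q(q-a-1)$), these first rows are linearly independent modulo $\PRS_q(q-a)$, so the $v_d$'s are independent modulo $C_1 + C_2$.

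The main obstacle will be the orthogonality of $C_3$: the computation is sensitive to the precise exponent of $T_2$ in the monomial $M_d$ attached to $v_d$, because in the boundary case $c_1' = 0$ the contribution from $F_e \setminus \{Q_e\}$ is non-vanishing and must cancel exactly with the $U_{e,q}$ contribution via an identity of the form $1 + \sum_\beta \beta^{q-1} = 0$. Identifying the right monomial so that this cancellation occurs, and confirming the vanishing of the relevant character sums in every non-boundary configuration $(c_1', c_2')$, forms the technical heart of the proof.
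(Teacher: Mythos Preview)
Your overall architecture---prove $C_1+C_2+C_3\subset C_e(a,b)^\perp$ and then match dimensions via Corollary~\ref{diffdimdual}---is exactly the route taken in the paper, and your treatment of $C_1$, $C_2$, and of the dimension count (including the independence of the $v_d$'s from $C_1+C_2$ via their first rows) is essentially the same.

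The one place where you diverge is the orthogonality of $C_3$, and here you are making life harder than necessary. You propose to realise $v_d$ as $\ev_{\mathcal{H}_e}(M_d)$ for a monomial $M_d$ carrying an $X_1$ factor, stratify $\mathcal{H}_e(\F_q)$, and then chase a delicate cancellation between the $F_e\setminus\{Q_e\}$ and $U_{e,q}$ contributions. This works, but the paper exploits a structural shortcut you have overlooked: both $v_d$ and $\ev_{\mathcal{H}_e}(M)$ are \emph{pure tensors} in $\F_q^{q+1}\otimes\F_q^{q+1}$, so
\[
\ev_{\mathcal{H}_e}(M)\cdot v_d
=\bigl[(0^{d_1},\alpha^{*d_2})\cdot(0,\alpha^{*(q-1-d)})\bigr]\times\bigl[(0^{c_1},\alpha^{*c_2})\cdot(1,\alpha^{*(q-b+ed)})\bigr].
\]
Now a two-case split suffices. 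If $d\neq d_2$ the first factor is a character sum $\sum_\alpha\alpha^{d_2+q-1-d}$ with exponent not divisible by $q-1$ (both $d,d_2\le a\le q-2$), hence zero. If $d=d_2$ then $b-ed=c_1+c_2$ by ($\bigstar_{a,b}$), so $(1,\alpha^{*(q-b+ed)})\in\PRS_q(q-(c_1+c_2)+1)=\PRS_q(c_1+c_2+1)^\perp$, and the second factor vanishes. No stratification, no boundary cancellation, no sensitivity to the exponent of $T_2$: the ``technical heart'' you anticipated dissolves once you use the tensor factorisation of the inner product.
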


\begin{proof}
We start by showing the right to left inclusion, while computing the dimension of $C_1 + C_2 + C_3$.
As we saw from Theorem \ref{theo:dualq}, if we denote $E_1 \coloneqq C_1 + \overline{C_{\mathbb{A},e}}((q-2-a)\sigma_e + (q-1)F_e)$ then we have
\[  E_1 \subset C_e(a,b)^\perp \, . \]
The left-hand side has dimension $\dim(E_1) = k^\perp_{e,q}(a,b)$, the dimension of $C_{\mathbb{A},e}(a,b)^\perp$.
However, we have $C_{\mathbb{A},e}((q-2-a)\sigma_e + (q-1)F_e) = \RS_q(q-1-a) \otimes \F_q^q$, meaning that we can rewrite
\[ E_1 = C_1 + \left( (0,\RS_q(q-1-a)) \otimes (0,\F_q^q) \right) \, .\]
Now, it holds that $(0,\RS_q(q-1-a)) \subset \PRS_q(q-a)$ and $(0,\F_q^q) \subset \F_q^{q+1}$, and since $C_e(a,b) \subset \PRS_q(a+1) \otimes \F_q^{q+1}$, we entail
\[
E_2 \coloneqq C_1 + C_2 \subset C_e(a,b)^\perp \, ,
\]
and
\[
E_2 = E_1 + \left[ \PRS_q(q-a) \otimes (1,0,\dots,0) \right] + \left[ (1, \alpha^{*q-a-1})\otimes (0,\F_q) \right] \, .
\]
The spaces $\PRS_q(q-a) \otimes (1,0,\dots,0)$ and $(1, \alpha^{*q-a-1})\otimes (0,\F_q)$, of dimensions $q-a$ and $q$ respectively, have disjoint supports and thus are in direct sum. Since every coordinate on the first row and the first column of a codeword of $ E_1$ is zero, we get $\dim(E_2) = \dim(E_1) + 2q-a$. Finally, let $M=X_1^{d_1}X_2^{d_2}T_1^{c_1}T_2^{c_2} \in L(aS_e + bF_e)$ and fix $d \in \llbracket 1+ \widetilde{s},a \rrbracket$. To show that $C_3$ is included in $C_e(a,b)^\perp$, we only need to show that $\ev_{\mathcal{H}_e}(M) \cdot v_d = 0$. Observe that the scalar product of $\ev_{\mathcal{H}_e}$ and $v_d$ is the product on $\F_q$ of two scalar products (see Figure \ref{fig:HirzebruchCode})
\[
\ev_{\mathcal{H}_e}(M) \cdot v_d =  \left[ ( 0^{d_1}, \alpha^{*d_2} ) \cdot  (0, \alpha^{*(q-1-d)}) \right] \times \left[  ( 0^{c_1}, \alpha^{*c_2} ) \cdot (1, \alpha^{*(q-b+ed)}) \right] \, .
\]
We distinguish two cases:
\begin{itemize}
\item Suppose $d \neq d_2$. Since both $d$ and $d_2$ are smaller than $ a \leq q-2$, we either have $q-1$ does not divide $q-1 -d + d_2$ or $q-1 - d -d_2 = 0$. In both cases, we get
\[
  ( 0^{d_1}, \alpha^{*d_2} ) \cdot  (0, \alpha^{*(q-1-d)}) =  \alpha^{*d_2} \cdot \alpha^{*(q-1-d)} = 0 \, . 
\]
\item Suppose $d = d_2$, then $q-b+ed = q - (c_1 + c_2)$ by Equation \eqref{MonomHirz} of Proposition \ref{prop:RRHirz}. Thus $(1,\alpha^{*(q-b+ed)}) \in \PRS_q(q-(c_1+c_2) + 1) = \PRS_q(c_1 + c_2)^\perp$ and
\[
 ( 0^{c_1}, \alpha^{*c_2} ) \cdot (1, \alpha^{*(q-b+ed)}) = 0 \, .
\]
\end{itemize}
In either case, $\ev_{\mathcal{H}_e}(M) \cdot v_d = 0$, whence $C_3 \subset C_e(a,b)^\perp$.

We now have the inclusion $C_1 + C_2 + C_3 \subset C_e(a,b)^\perp$. By puncturing $C_1 + C_2$ and $C_3$ to only keep the first row, we see that the punctured codes of $C_1 + C_2$ and $C_3$ are $\PRS_q(q-a)$ and $\sum_{d = \widetilde{s}+1}^a \langle (0, \alpha^{*(q-1-d)}) \rangle$ respectively, both of which are in direct sum. This implies that $C_1+C_2$ and $C_3$ are in direct sum as well, and thus $\dim(C_1 + C_2 + C_3) = \dim(C_1 + C_2) + \dim(C_3)$. In conclusion, we have
\begin{align*}
C_1 + C_2 + C_3 &\subset C_e(a,b)^\perp \, ,\\
\dim(C_1 + C_2 + C_3) &= \dim (C_{\mathbb{A},e}(a,b)^\perp) + 2q - \widetilde{s} \, .
\end{align*}
We end the proof by equality of dimensions using Corollary \ref{diffdimdual}.
\end{proof}

We conclude the study of $C_e(a,b)^\perp$ by expressing it as the sum of two AG codes on $\mathcal{H}_e$.

\begin{corollary}[Dual of $C_e(a,b)$ as a sum of two AG codes]\label{coro:dual}
Let $a$ and $b$ be two integers such that $0 \leq a \leq q-2$ and $ea \leq b$. Define two integers $a_\infty$ and $b_\infty$, large enough so that
\begin{align*}
ea_\infty &> (e+1)q-e-1-b \\
b_\infty &> q + e(q-a-1) \, .
\end{align*}
Then
\[
C_e(a,b)^\perp = C_e(q-a-1,b_\infty) + C_e(a_\infty, (e+1)q-e -1 -b) \, .
\]
\end{corollary}

\begin{proof}
 We will denote $C := C_e(a_\infty, (e+1)q - e -1 -b)$. Recall that since $b_\infty -e(q-a-1) > q$, the code $C_e(q-a-1,b_\infty)$ is the tensor product 
\[
 C_e(q-a-1,b_\infty ) =  \PRS_q(q-a) \otimes \F_q^{q+1} \, .
\]
With the same notations as Theorem \ref{theo:dual}, we consider the codes
\begin{align*}
C_1 &\coloneqq \overline{C_{\mathbb{A},e}}((q-1)\sigma_e + (q-2-b)F_e) \, , \\
C_2 &\coloneqq \PRS_q(q-a) \otimes \F_q^{q+1} \, , \\
C_3 &\coloneqq \langle (v_d)_{d \in \llbracket 1 + \widetilde{s},a \rrbracket} \rangle \, ;
\end{align*}
and we want to show that $C+C_2 = (C_1 + C_3) + C_2$. Looking at the basis of the Riemann--Roch space associated to $a_\infty S_e + ((e+1)q - e -1 -b)F_e$ (see Proposition \ref{prop:RRHirz}), by our choice of $x_\infty$ and since $x^q-x = 0$ for all $x \in \F_q$, we can show using Proposition \ref{prop: Code Hirzebruch} that 
\[
C = \bigoplus_{d = 0}^{q-1} \left[ \langle ( 0, \alpha^{*(q-1-d)}) \rangle \otimes  \PRS_q(q-b +ed) \right] \, ,
\]
where $\alpha^{*k}= (\alpha_1^k, \dots, \alpha_q^k)$ with $\F_q = \{ \alpha_1, \dots, \alpha_q\}$. Now, using Proposition \ref{prop:codeq} and after adding zero coordinates on the first row and first column, we write $C_1$ as the direct sum

 \begin{align*}
C_1 &= \overline{C_{\mathbb{A},e}}(q-1,(e+1)(q-1) - 1 - b)\\
& = \bigoplus_{d = 0}^{q-1} \left[ \langle (0, \alpha^{*(q-1-d)}) \rangle \otimes (0, \RS_q(q-1 -b + ed)) \right] \, .
\end{align*}
 Recall that for all $d \in \llbracket 1 + \widetilde{s},a \rrbracket$, we defined the vector $v_d \coloneqq (0, \alpha^{*(q-1-d)}) \otimes (1, \alpha^{*(q-b+ed)})$. Furthermore, for all integers $k$, we have $\PRS_q(k+1) = (1, \alpha^{*k}) + (0, \RS_q(k))$. It entails

\begin{align*}
C_1 + C_3 &= \bigoplus_{d = 0}^{\widetilde{s}} \left[ \langle (0, \alpha^{*(q-1-d)}) \rangle \otimes (0, \RS_q(q-1-b + ed)) \right] \\
		&\oplus \bigoplus_{d = \widetilde{s}+1}^{a} \left[ \langle (0, \alpha^{*(q-1-d)}) \rangle \otimes \PRS_q(q-b + ed) \right] \\
		& \oplus \bigoplus_{d = a+1}^{q-1} \left[ \langle (0, \alpha^{*(q-1-d)}) \rangle \otimes (0, \RS_q(q-1-b + ed)) \right] \, .
\end{align*}

However, if $d \leq \widetilde{s}$ then $q-b +ed \leq 0$ and $(0,\RS_q(q-1-b+ed)) = \PRS_q(q-b+ed) = \{ 0 \}$,  so we can simplify the sum as follow
\begin{align*}
C_1 + C_3 &= \bigoplus_{d = 0}^{a} \left[ \langle (0, \alpha^{*(q-1-d)}) \rangle \otimes \PRS_q(q-b + ed) \right] \\
		& \oplus \bigoplus_{d = a+1}^{q-1} \left[ \langle (0, \alpha^{*(q-1-d)}) \rangle \otimes (0, \RS_q(q-1-b + ed)) \right] \, .
\end{align*}
For all $d \in \llbracket a+1, q-1 \rrbracket$, define the vector $v_d \coloneqq (0, \alpha^{*(q-1-d)}) \otimes (1, \alpha^{*(q-b+ed)}) \in  C_2$. Adding these vectors to $C_1 + C_3$, we get
\begin{align*}
(C_1 + C_3) + \langle (v_d)_{d \in \llbracket a+1, q-1 \rrbracket} \rangle &= \bigoplus_{d = 0}^{q-1} \left[ \langle (0, \alpha^{*(q-1-d)}) \rangle \otimes \PRS_q(q-b + ed) \right] \\
&= C \, ,
\end{align*}
and since $ \langle (v_d)_{d \in \llbracket a+1, q-1 \rrbracket} \rangle$ is a subspace of $C_2$, we conclude

\[
C + C_2 = (C_1 + C_3) + C_2 = C_e(a,b)^\perp \, .
\]
\end{proof}

\section{Equivalence of codes and orthogonal codes on Hirzebruch surfaces}\label{section:equivalence}

In this section, we address the difficulty to obtain inclusions between codes of the form $C_e(a,b)$, due to $S_e$ and $F_e$ having $\F_q$-rational points in their respective supports (see Remark \ref{remarque chiante}). To this end, we express any AG code on $\mathcal{H}_e$ parametrized by a divisor $G$ containing no $\F_q$-rational point in its support. By choosing suitable parametrizing divisors, we will be able to define a new set of codes on $\mathcal{H}_e$ that allows for inclusions between codes. Recall by Proposition \ref{prop:RRgeneral} that we explicitly gave the Riemann--Roch space associated to \emph{any} divisor $G \in \Div(\mathcal{H}_e)$ and function $f \in \F_q(\mathcal{H}_e)$ such that $G + (f) = aS_e + bF_e$:
\[
L( G) = \left( \frac{f}{X_1^aT_1^{b}} \right)  L_*(aS_e + bF_e) \, .
\]
Everything that we present in this section is easily adaptable for AG codes defined on \emph{any} smooth and integral projective variety, regardless of the dimension, and is already well-known in the community. We simply phrase these notions in the context of Hirzebruch surfaces.
\subsection{Equivalence of AG codes}
We introduce here the notion of equivalence of codes, and the link between equivalence of AG codes and equivalence of their respective parametrizing divisors.

\begin{definition}[Schur Product]
Let $n \in \mathbb{N}$ and take two vectors $m = (m_1, \dots, m_n) \in \F_q^n$ and $m'=(m'_1, \dots, m'_n) \in \F_q^n$. The \emph{Schur product} of $m$ and $m'$, denoted $m*m'$, is their coordinate-wise multiplication
\[
m*m' = (m_1m'_1, \dots, m_nm'_n) \in \F_q^n \, .
\]
We denote $m^{*k} \coloneqq (m_1^k, \dots, m_n^k)$ for all integer $k$.
Let $C$ be a code over $\F_q$ of length $n$ and take $m \in \F_q^n$. We define
\[
m*C \coloneqq \{ m*c \, | \, c \in C \} \, .
\]
Finally, let $C$ be an AG code of length $n$ on an algebraic variety $\mathcal{V}$ constructed via the evaluation map $\ev$ on the points $P_1,\dots,P_n$. 
Then, for all function $f \in \F_q(\mathcal{V})$ regular in a neighborhood of $P_i$ for all $i \in \llbracket 1,n \rrbracket$, we define the code
\[
f*C = \ev(f)*C \,
\]
where $\ev(f) = (f(P_1), \dots, f(P_n))$.
\end{definition}

\begin{definition}[Equivalence of codes]
We say that two codes $C$ and $C'$ on $\F_q$ of length $n$ are \emph{equivalent} if there exists $x = (x_1, \dots, x_n) \in \left( \F_q^* \right)^n$ such that $x*C = C'$, \ie
\[ \forall (c_1,\dots,c_n) \in \F_q^n, \, (x_1c_1, \dots, x_nc_n) \in C' \Longleftrightarrow (c_1, \dots, c_n) \in C \, .\]
We will write $C \simeq C'$.
\end{definition}

Two equivalent codes trivially share the same parameters, see \cite[2.2.13]{Stic09}. The duals of two equivalent codes are also equivalent, entailing for instance that the dual distance is also the same.

\begin{proposition}
Let $C$ be a code over $\F_q$ of length $n$, and let $m \in \left( \F_q^* \right)^n$. Then
\[
 m^{*(q-2)} * C^\perp = \left( m * C \right)^\perp \,
\]
\end{proposition}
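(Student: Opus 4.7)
The plan is to reduce the statement to the familiar fact that multiplying by a unit preserves scalar products up to a compensating inverse, and then conclude by a dimension argument.

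First I would note that since $m \in (\F_q^*)^n$, Fermat's little theorem gives $m_i^{q-1} = 1$ for each coordinate, hence $m_i^{q-2} = m_i^{-1}$. Thus $m^{*(q-2)}$ is exactly the coordinate-wise inverse of $m$, which I will write $m^{*(-1)}$ for clarity. So the claim is equivalent to
\[
m^{*(-1)} * C^\perp = (m*C)^\perp.
\]

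Next I would establish the inclusion $m^{*(-1)} * C^\perp \subseteq (m*C)^\perp$ by a direct computation. For any $c \in C$ and any $c' \in C^\perp$,
\[
(m*c) \cdot (m^{*(-1)} * c') = \sum_{i=1}^n (m_i c_i)(m_i^{-1} c'_i) = \sum_{i=1}^n c_i c'_i = c \cdot c' = 0,
\]
so every element of $m^{*(-1)} * C^\perp$ is orthogonal to every element of $m*C$.

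Finally I would close the argument by comparing dimensions. Since coordinate-wise multiplication by a vector in $(\F_q^*)^n$ is an $\F_q$-linear isomorphism of $\F_q^n$, we have $\dim(m * C) = \dim C$ and $\dim(m^{*(-1)} * C^\perp) = \dim C^\perp = n - \dim C$. Therefore $\dim(m^{*(-1)} * C^\perp) = n - \dim(m*C) = \dim((m*C)^\perp)$, and the inclusion above is an equality. There is no real obstacle here; the only slightly non-trivial step is recognizing that $q-2$ is chosen precisely so that $m^{*(q-2)} = m^{*(-1)}$ works uniformly over $\F_q^*$ without having to write inverses.
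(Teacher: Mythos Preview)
Your proof is correct and follows essentially the same approach as the paper: the paper also observes that $m^{*(q-1)} = (1,\dots,1)$, computes $(m^{*(q-2)} * x) \cdot (m*c) = x \cdot c$ to obtain the inclusion, and then concludes by equality of dimensions.
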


\begin{proof}
Since $m^{*(q-1)} = (1, \dots, 1)$, for all $x \in \F_q^n$ and $c \in C$ we have $(m^{*(q-2)} * x) \cdot (m*c) = x \cdot c$. 
This implies that $m^{*(q-2)}*C^\perp \subset (m*C)^\perp$, and the equality comes from equality of dimensions.
\end{proof}

\begin{proposition}[AG codes with equivalent divisors]\label{prop:AGequivalent}
 Let $\mathcal{S}$ be a smooth projective and geometrically integral surface over $\mathbb{F}_q$, and let $G \simeq G'$ be two linearly equivalent divisors on $\mathcal{S}$. Let $\Delta = \{ P_1, \dots, P_n \}$ be a set of $n$ distinct $\F_q$-rational points of $\mathcal{S}$ that avoid the supports of $G$ and $G'$, and define the two AG codes $C_G = \ev_\Delta (L(G))$ and $C_{G'} = \ev_\Delta(L(G'))$ of length $n$. Then $C_G$ and $C_{G'}$ are equivalent.
\end{proposition}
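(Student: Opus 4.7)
The plan is to exploit the hypothesis $G \simeq G'$ to produce a rational function $f$ whose evaluation vector at $\Delta$ provides the required equivalence. By definition of linear equivalence, there exists $f \in \F_q(\mathcal{S})^*$ such that $G' = G + (f)$. Multiplication by $f$ then induces an $\F_q$-linear isomorphism
\[
\varphi : L(G') \xrightarrow{\ \sim\ } L(G), \qquad h \longmapsto fh,
\]
since $h \in L(G')$ if and only if $(h) + G' \geq 0$, if and only if $(fh) + G \geq 0$, if and only if $fh \in L(G)$.

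The next step is to verify that $f$ is regular and non-vanishing at each $P_i \in \Delta$, so that $\ev_\Delta(f) \in (\F_q^*)^n$. This uses the observation that every prime divisor appearing in $(f) = G' - G$ must appear in either $G$ or $G'$, hence $\Supp((f)) \subseteq \Supp(G) \cup \Supp(G')$. Since by assumption no $P_i$ lies in $\Supp(G) \cup \Supp(G')$, the function $f$ has neither a zero nor a pole at any $P_i$, so $f(P_i) \in \F_q^*$ is well-defined for all $i$.

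I would then conclude by a direct compatibility between evaluation and Schur product. For any $h \in L(G')$, since each $f(P_i)$ is defined and non-zero,
\[
\ev_\Delta(\varphi(h)) = \bigl( f(P_i)\, h(P_i) \bigr)_{i=1}^{n} = \ev_\Delta(f) * \ev_\Delta(h).
\]
Applying this to every element of $L(G')$ and using that $\varphi$ is surjective yields
\[
C_G = \ev_\Delta(L(G)) = \ev_\Delta(f L(G')) = \ev_\Delta(f) * C_{G'},
\]
and since $\ev_\Delta(f) \in (\F_q^*)^n$ this means precisely $C_G \simeq C_{G'}$.

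There is no serious obstacle: the only point that requires care is the verification that $\Supp((f)) \subseteq \Supp(G) \cup \Supp(G')$, which guarantees both that $\ev_\Delta(f)$ makes sense and that all its entries lie in $\F_q^*$. Everything else is a purely formal manipulation of the evaluation morphism.
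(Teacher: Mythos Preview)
Your proof is correct and follows essentially the same approach as the paper: write $(f) = G' - G$, observe that each $P_i$ lies outside $\Supp(G)\cup\Supp(G')\supseteq\Supp((f))$ so that $f(P_i)\in\F_q^*$, and conclude $C_G = \ev_\Delta(f)*C_{G'}$. The paper's proof is terser and omits the explicit verification of the support inclusion and of the isomorphism $L(G')\xrightarrow{\sim}L(G)$, but the argument is the same.
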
 

\begin{proof}
Write $G = (f) + G'$ with $f \in \F_q(\mathcal{S})$. For all $i \in \llbracket 1, n \rrbracket$, since $P_i$ is not in the supports of $G$ and $G'$, the evaluation $f(P_i)$ is well-defined and non-zero, that is, $f(P_i) \in \F_q^*$. Thus, $C_{G'} = f* C_G$ with $\ev_\Delta(f) \in \left( \F_q^* \right)^n$.
\end{proof}

\subsection{Equivalent codes on Hirzebruch surfaces}

We now translate Proposition \ref{prop:AGequivalent} in the context of Hirzebruch surfaces.
\begin{proposition}\label{prop:Hirzequivalent}
Let $(a,b) \in \mathbb{N}^2$ and let $G \simeq aS_e + bF_e$ be a divisor on $\mathcal{H}_e$ that avoids every points of $\mathcal{H}_e(\F_q)$. Denote $(f_G) \coloneqq (aS_e + bF_e) - G$ with $f_G \in \F_q(\mathcal{H}_e)$, and consider the AG code $C_e(G)$ over $\F_q$ and of length $(q+1)^2$ defined by $C_e(G) \coloneqq \ev_{\mathcal{H}_e}( L(G))$. Then $C_e(a,b) \simeq C_e(G)$ and
\[
C_e( G) = \left( \frac{f_G}{X_1^aT_1^{b}} \right) * C_e(a, b) \, .
\]
\end{proposition}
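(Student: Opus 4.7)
The plan is to invoke Proposition \ref{prop:RRgeneral} and transfer the standard evaluation on $L(G)$ through the explicit parametrization by $\Lis(aS_e + bF_e)$. Since the hypothesis on $G$ ensures that no $\F_q$-rational point lies in its support, the standard evaluation morphism $g \mapsto (g(P))_{P \in \mathcal{H}_e(\F_q)}$ is well-defined on $L(G)$ (see Remark \ref{remarque chiante}) and its image is exactly $C_e(G)$. By Proposition \ref{prop:RRgeneral}, every $g \in L(G)$ may be written uniquely as $g = \left(f_G / X_1^a T_1^{b-ea}\right) \cdot M$ for some $M \in \Lis(aS_e + bF_e)$, which gives a linear bijection $\Phi \colon \Lis(aS_e+bF_e) \to L(G)$, and it remains to compare $\ev_{\mathcal{H}_e}^{\mathrm{std}} \circ \Phi$ with the evaluation $\ev_{\mathcal{H}_e}$ from Definition \ref{def:codehirz}.

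The heart of the proof is to exhibit, for every $P \in \mathcal{H}_e(\F_q)$, a scalar $\lambda_P \in \F_q^*$ such that
\[
\Phi(M)(P) = \lambda_P \cdot M(P) \quad \text{for every } M \in \Lis(aS_e + bF_e),
\]
where $M(P)$ is the monomial evaluation of Definition \ref{def:codehirz}. For two monomials $M_1, M_2 \in \Lis(aS_e + bF_e)$ both non-vanishing at $P$, the ratio $M_1/M_2$ is a rational function on $\mathcal{H}_e$ regular and non-vanishing at $P$ (since $M_1$ and $M_2$ have the same class), and the identity $\Phi(M_1)/\Phi(M_2) = M_1/M_2$ as elements of $\F_q(\mathcal{H}_e)$ forces $\Phi(M_1)(P)/M_1(P) = \Phi(M_2)(P)/M_2(P)$; I would name this common value $\lambda_P$. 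It is nonzero because $P \notin \Supp(G)$, so $L(G)$ contains an element not vanishing at $P$, i.e.\ some $\Phi(M)(P) \neq 0$. For $M \in \Lis(aS_e+bF_e)$ with $M(P) = 0$, the identity $\Phi(M)(P) = 0$ is verified locally: the order of vanishing of $M$ at $\mathrm{rep}_P$ along any of the coordinate curves $S_e$ and $F_e$ is precisely compensated by the $X_1^a T_1^{b-ea}$ factor in the denominator of $\Phi$, so $\Phi(M)$ inherits the matching vanishing.

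Assembling these identities over all $P$ and using linearity yields $\ev_{\mathcal{H}_e}^{\mathrm{std}}(L(G)) = \lambda * \ev_{\mathcal{H}_e}(\Lis(aS_e+bF_e))$, that is, $C_e(G) = \lambda * C_e(a,b)$ with $\lambda = (\lambda_P)_P \in (\F_q^*)^{(q+1)^2}$. Identifying $\lambda_P$ with the value of the rational function $f_G/(X_1^a T_1^{b-ea})$ at $P$ — where, for $P \in S_e \cup F_e$, the orders of vanishing of $(f_G) = aS_e + bF_e - G$ along $S_e$ and $F_e$ exactly cancel the corresponding vanishing of the denominator, leaving a well-defined nonzero value — recovers the displayed formula. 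The equivalence $C_e(a,b) \simeq C_e(G)$ is then immediate from $\lambda \in (\F_q^*)^{(q+1)^2}$. I expect the main obstacle to be this last local computation at rational points $P \in S_e \cup F_e$: the expression $f_G/(X_1^a T_1^{b-ea})$ is formally of the shape $0/0$ in Cox coordinates, and a careful chart-wise computation tracking the compensating orders is needed to confirm both the well-definedness and the non-vanishing of $\lambda_P$.
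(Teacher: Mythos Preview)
Your proposal is correct and follows the same route as the paper: invoke Proposition \ref{prop:RRgeneral} to write $L(G) = (f_G/X_1^a T_1^{b-ea}) \cdot L_*(aS_e+bF_e)$, then observe that the scalar factor evaluates to an element of $\F_q^*$ at every rational point because $G$ avoids $\mathcal{H}_e(\F_q)$. The paper's proof is two lines and treats the well-definedness and non-vanishing of $\ev_{\mathcal{H}_e}\!\left(f_G/X_1^a T_1^{b-ea}\right)$ at points of $S_e \cup F_e$ as immediate, whereas you spell out the compensating-orders argument at those points; your added care is warranted but does not constitute a different approach.
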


\begin{proof}
Recall that $F_e = \{T_1 = 0 \}$ and $S_e = \{X_1 = 0 \}$. 
By construction of $f_G$ and the hypothesis on $G$, the evaluation of $ \left( \frac{f_G}{X_1^aT_1^{b}} \right)$ at every $\F_q$-rational point of $\mathcal{H}_e$ is well-defined and non-zero, whence
\[
\ev_{\mathcal{H}_e}  \left( \frac{f_G}{X_1^aT_1^{b}} \right) \in \left( \F_q^* \right)^{(q+1)^2} \, .
\]
Proposition \ref{prop:RRgeneral} allows us to conclude.
\end{proof}

Our first mean to obtain an inclusion between two AG codes on $\mathcal{H}_e$ is the following Corollary to Proposition \ref{prop:Hirzequivalent}:

\begin{corollary}\label{coro:Hirzequivalent} Let $G_1 \simeq a_1S_e + b_1F_e$ and $G_2 \simeq a_2S_e + b_2F_e$ be two divisors whose supports avoid $\mathcal{H}_e (\F_q)$ and such that $G_1 \leq G_2$. Then there exists two linear codes $C_1 \simeq C_e(a_1,b_1)$ and $C_2 \simeq C_e(a_2,b_2)$ such that
\[
C_1 \subset C_2 \, .
\]
\end{corollary}  

\begin{proof}
Since $G_1 \leq G_2$ then $L(G_1) \subset L(G_2)$ and we apply Proposition \ref{prop:Hirzequivalent} to $C_1 \coloneq C_e(G_1)$ and $C_2 \coloneq C_e(G_2)$.
\end{proof}

There always exists a divisor $G \simeq aS_e+bF_e$ whose support avoids any finite number of points, thanks to the well-known following Lemma.

\begin{lemma}[Moving lemma {\cite[III.1.3]{Shaf13}}]\label{movinglemma}
 Let $\mathcal{S}$ be a smooth projective and geometrically integral surface over $\mathbb{F}_q$. For all divisor $D \in \mathcal{S}$ and all finite families of points $P_1, \dots, P_n \in \mathcal{S}$, there exists $D' \in \Div(\mathcal{S})$ such that $D' \simeq D$ and $P_i \notin \Supp(D')$ for all $ i \in \llbracket 1,n \rrbracket$.
\end{lemma}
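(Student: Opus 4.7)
The plan is to express $D$ as the difference of two effective divisors, each chosen so that none of the $P_i$'s lie in its support. First, I would fix any ample divisor $H$ on $\mathcal{S}$, which exists because $\mathcal{S}$ is projective, and take $m$ large enough so that both $mH$ and $D + mH$ are very ample. For such $m$, the complete linear systems $|mH|$ and $|D+mH|$ are base-point free; equivalently, for any fixed point $P \in \mathcal{S}$, the locus of divisors in $|L|$ containing $P$ is a \emph{proper} linear subspace $H_P \subsetneq |L|$ whenever $L$ is very ample.

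Next, I want to exhibit $E \in |D+mH|$ whose support avoids every $P_i$. Working in the $\F_q$-points of the projective space $|D + mH|$, this amounts to finding a point outside the finite union of proper subspaces $\bigcup_{i=1}^n H_{P_i}$. For $m$ sufficiently large, $\dim|D + mH|$ grows (by Riemann--Roch on $\mathcal{S}$ combined with Serre vanishing), so a simple counting argument in $\mathbb{P}^N(\F_q)$ guarantees that the union of $n$ proper subspaces cannot cover all $\F_q$-points. The same argument applied to $|mH|$ produces $F \in |mH|$ with $P_i \notin \Supp(F)$ for every $i$. I then set $D' \coloneqq E - F$: by construction $D' \simeq (D + mH) - mH \simeq D$, and $\Supp(D') \subseteq \Supp(E) \cup \Supp(F)$ avoids every $P_i$, which concludes the proof.

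The main obstacle will be the finiteness of the base field: over an algebraically closed field the complement of finitely many proper Zariski-closed subsets of an irreducible projective space is automatically non-empty, while over $\F_q$ one has to rule out the pathological situation where all $\F_q$-points of $|L|$ lie in $\bigcup_i H_{P_i}$. Overcoming this only requires $\dim|L|$ to exceed an explicit bound depending on $n$ and $q$, which is arranged by enlarging $m$; once this bookkeeping is done, the argument goes through verbatim. Everything else in the proof is formal: the existence of an ample divisor on a projective surface, the passage from ample to very ample by taking multiples, and the elementary fact that effective Cartier divisors avoiding a finite set of points form a Zariski-open locus in their linear system.
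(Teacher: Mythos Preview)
The paper does not prove this lemma at all; it is quoted from \cite[III.1.3]{Shaf13} and used as a black box, so there is no argument on the paper's side to compare against.

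Your overall strategy is sound, but the counting step contains a real gap. You assert that once $N = \dim|L|$ is large, the $\F_q$-points of $\mathbb{P}^N$ cannot be covered by $n$ proper linear subspaces. This is false as stated: for \emph{every} $N \geq 1$, the $q+1$ hyperplanes $\{x_0 = \lambda x_1\}$, $\lambda \in \mathbb{P}^1(\F_q)$, already cover all of $\mathbb{P}^N(\F_q)$. The crude union bound $n \cdot \#\mathbb{P}^{N-1}(\F_q) < \#\mathbb{P}^N(\F_q)$ only gives $n \leq q$, independently of $N$, and in the paper's intended application one needs $n = \#\mathcal{H}_e(\F_q) = (q+1)^2 > q$. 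Enlarging $m$ alone does not rescue the argument.

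The repair is to exploit the specific nature of the hyperplanes $H_{P_i}$ rather than treat them as arbitrary. For $m$ large, Serre vanishing gives $H^1\bigl(\mathcal{S}, \mathcal{I}_Z \otimes \mathcal{O}(D+mH)\bigr) = 0$, where $Z = \{P_1,\dots,P_n\}$; the long exact sequence attached to $0 \to \mathcal{I}_Z(D+mH) \to \mathcal{O}(D+mH) \to \mathcal{O}_Z \to 0$ then shows that the evaluation map $L(D+mH) \to \bigoplus_i \kappa(P_i)$ is \emph{surjective}. Any preimage of $(1,\dots,1)$ is a global section vanishing at none of the $P_i$, which produces your $E$; the same reasoning furnishes $F$. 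With this substitution for the counting step, the rest of your argument goes through unchanged.
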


However, the assumptions of Corollary \ref{coro:Hirzequivalent} also includes an order relation $G_1 \leq G_2$ between our divisors, meaning $G_2 - G_1 \geq 0$. We have to construct, under good assumptions on $(a,b) \in \mathbb{N}^2$, a curve (\ie a positive divisor) $G \simeq aS_e+bF_e$ that avoids $\mathcal{H}_e(\F_q)$. This is the aim of the following Proposition.

\begin{proposition}\label{prop:equivdiv}
Let $(a,b) \in \mathbb{N}^2$. If $a \geq 2$ then there exists a positive divisor $\widetilde{\sigma}_{e,a} \simeq a\sigma_e$ whose support avoids the points of $\mathcal{H}_e(\F_q)$. Moreover, if $b-ea \geq 2$ there exists a positive divisors $\widetilde{F}_{e,b-ea} \simeq (b-ea)F_e$ whose support avoids $\mathcal{H}_e(\F_q)$.
\end{proposition}

\begin{proof}
Recall that $\sigma = \{X_2 = 0 \}$ and $F_e = \{T_1 = 0 \} \simeq \{T_2 = 0 \} = F'_e$. 

Suppose $a \geq 2$ and let $p$ be a prime number that divides $a$. Let $P(x) \in \F_q[x]$ be an irreducible polynomial on $\F_q$ of degree $p$, and consider the curve $\widetilde{\sigma}_{e,p} \coloneq \{ P(X_2) = 0 \}$. After a base field extension from $\F_q$ to $\F_{q^p}$, the curve $\widetilde{\sigma}_{e,p}$ splits into $p$ distinct curves, each equivalent to $\sigma_e$ and one for each distinct root of $P$. This implies that $\widetilde{\sigma}_{e,p}.F_e = p$ and since $\widetilde{\sigma}_{e,p} . \sigma_e = 0$ then $\widetilde{\sigma}_{e,p} \simeq p \sigma_e \,$ . By construction of $\widetilde{\sigma}_{e,p}$, no $\F_q$-rational point is in its support and we can conclude by defining 
\[ \widetilde{\sigma}_{e,a} \coloneq \frac{a}{p} \widetilde{\sigma}_{e,p} \simeq a \sigma_e \, . \]
We use the exact same proof for $F_e$ by considering the curve $\widetilde{F}_{e,p'} = \{ P'(T_2) = 0 \} \simeq p' F'_e$ for any prime number $p'$, where $P'$ is an irreducible polynomial on $\F_q$ of degree $p'$.
\end{proof}

\begin{remark}
When $e>0$, it is not possible to construct a curve $0 \leq \widetilde{S}_{e,b} \simeq b S_e$ for an integer $b > 0$ whose support avoids the $\F_q$-rational points of $\mathcal{H}_e$. Indeed, since $S_e$ has a negative self-intersecting number $-e$ then it is the only curve in its equivalent class and the same goes for $bS_e$.
\end{remark}

We can now constructs pairs of AG codes on $\mathcal{H}_e$, one containing the other, provided that the difference between their parameters is high enough (or zero).

\begin{theorem}\label{theo:equivcode}
Let $(\delta_a, \delta_b)$ be a pair of integers such that at least \emph{one} of the following holds:

\begin{itemize}
\item $\delta_a = 0$ and $\delta_b \geq 2$,
\item $\delta_a \geq 2$ and $\delta_b = 0$,
\item or $\delta_a \geq 2$ and $\delta_b - e\delta_a \geq 2$.
\end{itemize} 
Then, for all pair of integers $(a,b)$ there exists two codes $\widetilde{C}_e(a,b) \simeq C_e(a,b)$ and $\widetilde{C}_e(a+\delta_a , b+\delta_b) \simeq C_e(a+\delta_a , b+\delta_b)$ such that
\[
\widetilde{C}_e(a,b) \subset \widetilde{C}_e(a+\delta_a , b+\delta_b) \, .
\]
\end{theorem}

\begin{proof}
By the moving Lemma \ref{movinglemma}, there exists a divisor $G \simeq aS_e + bF_e$ on $\mathcal{H}_e$ with no $F_q$-rational point in its support. We then use Proposition \ref{prop:equivdiv} to construct a positive divisor $0 \leq \Delta \simeq \delta_a \sigma_e + (\delta_b-e \delta_a) F_e \simeq \delta_a S_e + \delta_b F_e$ that avoids $\mathcal{H}_e(\F_q)$. It holds
\[
aS_e + bF_e \simeq G \leq G + \Delta \simeq (a+\delta_a)S_e + (b+\delta_b)F_e \, .
\]
We conclude by Corollary \ref{coro:Hirzequivalent}:
\[
C_e(a,b) \simeq C_e(G) \subset C_e(G + \Delta) \simeq C_e(a+\delta_a, b + \delta_b) \, .
\]
\end{proof}

\begin{corollary}[Orthogonal codes on Hirzebruch surfaces]\label{coro:ortho}
Let $a,b \in \mathbb{N}$ be such that $a \leq q-2$ and $0 \leq b-ea$. For all pairs of positive integers $(a',b')$ such that $a \leq q-a'-3$ or $a = q-a'-1$,
there exists a code $C\simeq C_e(a,b)$ such that
\[
C \subset C_e(a',b')^\perp \, .
\]
\end{corollary}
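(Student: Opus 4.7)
The plan is to use Proposition~\ref{prop:equivcode} to replace the codes $C_e(a,b)$ and $C_e(a',b')$ with their equivalent counterparts $\widetilde{C}_e(a,b)$ and $\widetilde{C}_e(a',b')$, which are defined by evaluation of sections of line bundles whose supports avoid every $\mathbb{F}_q$-rational point of $\mathcal{H}_e$. Setting $C \coloneqq \widetilde{C}_e(a,b)$ and $C' \coloneqq \widetilde{C}_e(a',b')$, it then suffices to prove the orthogonality $C \subset C'^\perp$.

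For any $f \in L(a\widetilde{S}_e + b\widetilde{F}_e)$ and $f' \in L(a'\widetilde{S}_e + b'\widetilde{F}_e)$, the product $ff'$ belongs to $L((a+a')\widetilde{S}_e + (b+b')\widetilde{F}_e)$ and is regular at every point of $\mathcal{H}_e(\F_q)$, since neither $\widetilde{S}_e$ nor $\widetilde{F}_e$ contains any rational point in its support. Consequently,
\[
\ev_{\mathcal{H}_e}(f) \cdot \ev_{\mathcal{H}_e}(f') \;=\; \sum_{P \in \mathcal{H}_e(\F_q)} (ff')(P),
\]
and the inclusion $C \subset C'^\perp$ is equivalent to the assertion that $\sum_{P} g(P) = 0$ for every $g \in L((a+a')\widetilde{S}_e + (b+b')\widetilde{F}_e)$.

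I would then transfer this vanishing to the explicit monomial picture using the equivalence $\widetilde{C}_e(a+a', b+b') = g_S^{a+a'} g_F^{b+b'} * C_e(a+a', b+b')$ from Proposition~\ref{prop:equivcode}, together with the basis of $\Lis((a+a')S_e + (b+b')F_e)$ described in Proposition~\ref{prop:RRHirz}. Using the canonical representatives of Notation~\ref{nota: P1} and splitting $\mathcal{H}_e(\F_q)$ into the four orbits $\{X_1T_1\neq 0\}$, $F_e\setminus\{Q_e\}$, $S_e\setminus\{Q_e\}$ and $\{Q_e\}$, the sum for each monomial $X_1^{d_1}X_2^{d_2}T_1^{c_1}T_2^{c_2}$ decomposes into a product of one-dimensional character sums $\sum_{\alpha\in\F_q}\alpha^k$, each of which vanishes unless $k=0$ or $k$ is a positive multiple of $q-1$.

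The main obstacle will be the case analysis verifying that the hypotheses $a+a' \leq q-1$, $a \leq q-2$, $b \geq ea$ and $b' \geq ea'$ suffice to kill every surviving character sum. The bound $a+a'\leq q-1$ confines the exponents on $X_1,X_2$ to $\llbracket 0, q-1\rrbracket$, so the only dangerous exponent is $q-1$ itself, forcing $d_1=0$ and $d_2=a+a'=q-1$; the strict inequality $a\leq q-2$ then prevents the same from happening on the other side of the block structure. For the residual edge contributions, the freedom granted by Lemma~\ref{movinglemma} in choosing the representatives $\widetilde{S}_e, \widetilde{F}_e$ (and hence $g_S, g_F$) within their linear-equivalence classes should permit tuning the nowhere-zero Schur factors so that the boundary terms cancel, completing the proof.
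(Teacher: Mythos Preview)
Your approach has a genuine gap: the specific choice $C=\widetilde{C}_e(a,b)$ and $C'=\widetilde{C}_e(a',b')$ does \emph{not} satisfy $C\subset C'^\perp$. Take $f=1\in L(a\widetilde{S}_e+b\widetilde{F}_e)$ and $f'=1\in L(a'\widetilde{S}_e+b'\widetilde{F}_e)$ (both spaces contain the constants since the divisors are effective). Then
\[
\ev_{\mathcal{H}_e}(f)\cdot\ev_{\mathcal{H}_e}(f')\;=\;\sum_{P\in\mathcal{H}_e(\F_q)}1\;=\;(q+1)^2\;=\;1\neq 0 \quad\text{in }\F_q,
\]
since $q\equiv 0\pmod{p}$. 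No choice of representatives $\widetilde{S}_e,\widetilde{F}_e$ can repair this: the constant function $1$ evaluates to $1$ at every rational point regardless of which divisors you move to, so the ``tuning'' suggested at the end cannot help. (Relatedly, your claimed equivalence between $C\subset C'^\perp$ and the vanishing of $\sum_P g(P)$ for \emph{all} $g\in L((a+a')\widetilde{S}_e+(b+b')\widetilde{F}_e)$ is only a sufficient condition, since the products $ff'$ need not span the whole Riemann--Roch space; but the counterexample $f=f'=1$ already kills the argument.)

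The paper proceeds quite differently and never attempts to make two $\widetilde{C}_e$ codes mutually orthogonal. Instead it uses Theorem~\ref{theo:dual}: the explicit description of $C_e(a',b')^\perp$ contains the summand $C_2=\PRS_q(q-a')\otimes\F_q^{q+1}$, which is itself a Hirzebruch code $C_e(q-a'-1,B)$ for $B$ large. Then Proposition~\ref{prop:equivcode} gives $\widetilde{C}_e(a,b)\subset\widetilde{C}_e(q-a'-1,B)=g_S^{q-a'-1}g_F^{B}\ast C_e(q-a'-1,B)\subset g_S^{q-a'-1}g_F^{B}\ast C_e(a',b')^\perp$, and one sets $C'\coloneqq (g_S^{q-a'-1}g_F^{B})^{*(q-2)}\ast C_e(a',b')$. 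The point is that $C'$ is \emph{not} $\widetilde{C}_e(a',b')$; the correct Schur twist is dictated by the shape of the dual, not by the divisor $a'\widetilde{S}_e+b'\widetilde{F}_e$.
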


\begin{proof}
From Theorem \ref{theo:dual}, we know that $\PRS_q(q-a') \otimes \F_q^{q+1} \subset C_e(a',b')^\perp$. However, by Proposition \ref{prop: Code Hirzebruch}, it holds $\PRS_q(q-a') \otimes \F_q^{q+1} = C_e(q-a'-1,B)$, where $B \in \mathbb{N}$ is large enough so that $B \geq b+2$ and $B  - e(q-a'-1) \geq q$. From Theorem \ref{theo:equivcode},  there exists a code $\widetilde{C}_e(a,b) \simeq C_e(a,b)$ such that
\[
\widetilde{C}_e(a,b) \subset C_e(q-a'-1,B) \subset  C_e(a',b')^\perp \, .
\]
\end{proof}

\section{Construction of CSS quantum codes from Hirzebruch surfaces}\label{section:css}
Quantum codes are tools to protect qudits from errors in the world of quantum computing. A $\llbracket n,k,d \rrbracket_q$ quantum code encodes $k$ logical qudits in $n$ physical qudits and can correct up to $\frac{d-1}{2}$ quantum errors.

In this section, we construct quantum codes using the CSS method, in the context of Hirzebruch surfaces. For the rest of this paper, we fix ${e \geq 2}$.

\subsection{The CSS construction}
The CSS construction, developed independently by Calderbank \& Shor \cite{calderbank1996good}, and Steane \cite{steane2002enlargement}, produces a quantum code from a pair of linear codes. We recall the construction below.

For any subset $E \subset \F_q^n  \setminus \{ 0 \}$, $n \in \mathbb{N}$, we denote $d(E) = \min \{\wt(x) \, | \, x \in E \}$.

\begin{theorem}[CSS construction \cite{calderbank1996good}]\label{th:css}
Let $C_1 \subset C_2 \subset \mathbb{F}_q^n$  be two linear codes. Then, there exists a $\llbracket n,k,d \rrbracket_q$ quantum code called CSS, and denoted by $(C_1,C_2)_{\CSS}$, where $k=\dim(C_2)-\dim(C_1)$ and $d=\min\{ d(C_2\setminus C_1),d(C_1^\perp\setminus C_2^\perp)\}$.
\end{theorem}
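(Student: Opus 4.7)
The plan is to follow the standard stabilizer-formalism proof of the CSS construction. First I would define the CSS code explicitly as the subspace of $(\mathbb{C}^q)^{\otimes n}$ spanned, for each coset $c + C_1 \in C_2/C_1$, by the superposition
\[
|\psi_{c+C_1}\rangle \coloneqq \frac{1}{\sqrt{|C_1|}} \sum_{x \in C_1} |c+x\rangle.
\]
These vectors are orthonormal because distinct cosets of $C_1$ in $\mathbb{F}_q^n$ have disjoint supports in the computational basis, so the code has dimension $|C_2|/|C_1| = q^{\dim(C_2)-\dim(C_1)}$, giving $k=\dim(C_2)-\dim(C_1)$ logical qudits.

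For the distance, I would invoke the Knill--Laflamme quantum error-correction criterion and study the action of the generalized Pauli group on the code. Using the shift operators $X_a : |v\rangle \mapsto |v+a\rangle$ and the phase operators $Z_b : |v\rangle \mapsto \chi(b \cdot v)\,|v\rangle$, where $\chi$ is a fixed non-trivial additive character of $\mathbb{F}_q$, a direct computation on the basis $|\psi_{c+C_1}\rangle$ shows:
\begin{itemize}
\item $X_a$ stabilises the code space iff $a \in C_2$, and acts as the identity on it iff $a \in C_1$;
\item $Z_b$ stabilises the code space iff $b \in C_1^\perp$, and acts as the identity iff $b \in C_2^\perp$.
\end{itemize}
Any $X_a Z_b$ with $a \notin C_2$ or $b \notin C_1^\perp$ maps the code space into an orthogonal subspace and is therefore detected by a syndrome measurement. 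An undetectable error thus corresponds to a pair $(a,b) \in C_2 \times C_1^\perp$, and it acts non-trivially on the logical space iff $a \notin C_1$ or $b \notin C_2^\perp$.

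It then remains to compute the minimum weight of such a non-trivial logical error. Since $\mathrm{wt}(X_aZ_b) \geq \max(\mathrm{wt}(a), \mathrm{wt}(b))$, and since $X_a$ with $a \in C_2 \setminus C_1$ (taking $b=0$) and $Z_b$ with $b \in C_1^\perp \setminus C_2^\perp$ (taking $a=0$) are themselves admissible non-trivial logical errors, the minimum is attained on one of these two families, giving
\[
d = \min\{\, d(C_2 \setminus C_1),\; d(C_1^\perp \setminus C_2^\perp)\,\}.
\]
The main obstacle is mostly notational rather than conceptual: defining $Z_b$ correctly when $q$ is a non-prime power requires composing the bilinear form $b \cdot v$ with the absolute trace $\mathrm{tr}: \mathbb{F}_q \to \mathbb{F}_p$ before exponentiating to obtain a character, and one must keep careful track of this when verifying the stabilisation conditions above. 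Once the character theory is properly set up, the rest is straightforward linear algebra on the coset structure $C_1 \subset C_2 \subset \mathbb{F}_q^n$ and its dual filtration $C_2^\perp \subset C_1^\perp \subset \mathbb{F}_q^n$.
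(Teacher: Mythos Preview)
The paper does not prove this theorem: it is quoted verbatim from the literature (the citation to Calderbank--Shor is built into the theorem header) and used as a black box in Section~\ref{section:css}. There is therefore nothing to compare your argument against.

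That said, your sketch is the standard stabiliser-formalism proof and is correct in outline. A couple of minor points you would want to tighten in a full write-up: the claim that a general $X_aZ_b$ with $a\notin C_2$ or $b\notin C_1^\perp$ sends the code space to an orthogonal subspace is not quite the right formulation of detectability---what you actually need is the Knill--Laflamme condition $\langle\psi_i|X_aZ_b|\psi_j\rangle=c\,\delta_{ij}$, and you should verify it directly for mixed errors rather than treating $X_a$ and $Z_b$ separately. Also, the inequality $\mathrm{wt}(X_aZ_b)\geq\max(\mathrm{wt}(a),\mathrm{wt}(b))$ is what makes the minimum over all logical Paulis reduce to the pure $X$ and pure $Z$ cases; it is worth stating explicitly that for \emph{any} $(a,b)$ in the normaliser modulo the stabiliser (i.e.\ with $a\in C_2$, $b\in C_1^\perp$, and not both in $C_1$, $C_2^\perp$), at least one of $a\in C_2\setminus C_1$ or $b\in C_1^\perp\setminus C_2^\perp$ holds, so the bound follows. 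Your remark about handling non-prime $q$ via the absolute trace is exactly right.
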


For any pair of codes $C_1 \subset C_2$, $d(C_2 \setminus C_1) \geq d(C_2)$, and the $\llbracket n,k,d\rrbracket_q$ code $(C_1,C_2)_{\CSS}$ is said to be \emph{degenerate} if $d > \min\{ d(C_2), d(C_1^\perp) \}$, and \emph{non-degenerate} otherwise. Note that $d(C_1) > d(C_2)$ implies $d(C_2 \setminus C_1) = d(C_2)$.

\subsection{CSS codes from $C_e(a,b)$}

We will now apply Theorem \ref{th:css} using linear codes equivalent to those of the form $C_e(a,b)$. We start by the injective case.
\begin{theorem}[CSS codes from Hirzebruch surfaces, injective case]\label{th:cssinj}
 Let $(a,b)$ be a pair of positive integers and $(\delta_a,\delta_b) \in (\mathbb{N} \setminus \{1 \})^2$ be such that $ a + \delta_a \leq q-1$, $ b + \delta_b \leq q-1$, $b - ea \geq 0$ and $\delta_b -e \delta_a \in \llbracket 0, q-1 \rrbracket \setminus \{ 1 \}$. Then there exists a $\llbracket n, k, d \rrbracket_q$ CSS quantum code, where
\begin{align*}
n&=(q+1)^2 \, ; \\
k &= (a + \delta_a +1)\left(b + \delta_b + 1 - \frac{e(a +\delta_a)}{2}\right) - (a +1)\left(b + 1 - \frac{ea}{2}\right) \, ; \\
d &\geq \min \{ a, b -ea \} +2  \, .
\end{align*}
\end{theorem}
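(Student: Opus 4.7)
The plan is to apply the CSS construction of Theorem \ref{th:css} to a nested pair of codes equivalent to $C_e(a_1,b_1)$ and $C_e(a_2,b_2)$. Codes of the form $C_e(a,b)$ do not themselves admit inclusions in general, because the divisors $aS_e+bF_e$ contain $\F_q$-rational points in their supports, so I would first pass to the equivalent codes $\widetilde{C}_e(a_i,b_i)$ of Section \ref{section:equivalence}, parametrized by divisors $a_i\widetilde{S}_e+b_i\widetilde{F}_e$ whose supports avoid $\mathcal{H}_e(\F_q)$. Setting $C_1 \coloneqq \widetilde{C}_e(a_1,b_1)$ and $C_2 \coloneqq \widetilde{C}_e(a_2,b_2)$, the hypotheses $a_1 \leq a_2$ and $b_1 \leq b_2$ produce $C_1 \subset C_2$ via the second part of Proposition \ref{prop:equivcode}, so Theorem \ref{th:css} yields a CSS code $(C_1,C_2)_{\CSS}$ of length $n=(q+1)^2$.

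Equivalent codes share their length, dimension, and minimum distance (and therefore their dual minimum distance as well), so I would read off the dimensions of the $\widetilde{C}_e(a_i,b_i)$ directly from Proposition \ref{prop:paramcode} applied to $C_e(a_i,b_i)$. The hypothesis $b_i \leq q-1$ gives $s_i = (b_i-q)/e < 0$, whence $\widetilde{s}_i = -1$, and the formula collapses to $\dim(\widetilde{C}_e(a_i,b_i)) = (a_i+1)\bigl(b_i+1-\tfrac{ea_i}{2}\bigr)$. Subtracting the two values yields exactly the claimed $k = \dim(C_2) - \dim(C_1)$.

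For the minimum distance, I would combine the elementary inequality $d \geq \min\{d(C_2),\,d(C_1^\perp)\}$, which follows from $d(C_2 \setminus C_1) \geq d(C_2)$ and $d(C_1^\perp \setminus C_2^\perp) \geq d(C_1^\perp)$, with the bounds already established. Theorem \ref{prop:dualdistance} provides $d(C_1^\perp) \geq \min\{a_1,b_1-ea_1\}+2$ when $a_1 \geq 1$, and the edge case $a_1 = 0$ reduces to a projective Reed--Solomon code whose dual distance is $b_1+2 \geq 2 = \min\{0,b_1\}+2$. On the other side, Proposition \ref{prop:paramcode}, applied in the regime $q > b_2$ allowed by the hypothesis $b_2 \leq q-1$, yields $d(C_2) = (q+\mathds{1}_{a_2=0})(q-b_2+1) \geq 2q$. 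The main technical point — and really the only piece requiring any care — is to verify that this latter bound dominates: since $\min\{a_1,b_1-ea_1\}+2 \leq q+1 \leq 2q$ for $q \geq 2$, the CSS distance is governed by $d(C_1^\perp)$, and the announced inequality $d \geq \min\{a_1,b_1-ea_1\}+2$ follows.
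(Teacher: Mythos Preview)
Your proof is correct and follows the same strategy as the paper's: pass to the equivalent codes $\widetilde{C}_e(a_i,b_i)$ via Proposition \ref{prop:equivcode} to secure the inclusion $C_1\subset C_2$, read off dimensions from Proposition \ref{prop:paramcode} in the injective regime $\widetilde{s}_i=-1$, and observe that $d(C_2)\geq 2q$ dominates the dual-distance bound from Theorem \ref{prop:dualdistance}. One minor slip in the edge case: when $a_1=0$ the code $C_e(0,b_1)=\langle(1,\dots,1)\rangle\otimes\PRS_q(b_1+1)$ has length $(q+1)^2$, not $q+1$, and by Proposition \ref{prop:checkproduct} its dual distance is $\min\{2,b_1+2\}=2$, not $b_1+2$; since $2=\min\{0,b_1\}+2$ your conclusion is unaffected.
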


\begin{proof}
Using Theorem \ref{theo:equivcode}, we construct $C_1 \simeq C_e(a,b)$ and $C_2 \simeq C_e(a + \delta_a, b + \delta_b)$ such that $C_1 \subset C_2$. Since, $b + \delta_b < q$, we have by Proposition \ref{prop:paramcode} and Theorem \ref{prop:dualdistance} that
\[
d(C_1^\perp) \leq \min \{a,b \} +2  < 2q \leq d(C_2) \, . 
\]

We now construct the code $(C_1,C_2)_{\CSS}$ using Theorem \ref{th:css}, whose parameters are computed using  Proposition \ref{prop:paramcode} and Theorem \ref{prop:dualdistance}.
\end{proof}

As expected, using the same notation as Theorem \ref{th:cssinj}, the dual distance is very low compared to the length and cannot go higher than $q+1$. This motivates us to leave the injective case to obtain bigger dimension for a fixed dual distance. Consider the code $C_e(m,(e+1)m)$ where $0 \leq m \leq q-2$. It is the smallest AG code on $\mathcal{H}_e(\F_q)$ whose dual distance is $m+2$. We fix $C_1 \simeq C_e(m,(e+1)m)$, and we are going to construct $C_2 \simeq C_e(a,b)$ such that $C_1 \subset C_2$ and $d(C_2) = d(C_1^\perp) = m+2$, while having the biggest possible dimension for $C_2$. Using Proposition \ref{prop:paramcode}, we see that to obtain $d(C_e(a,b)) = m+2$, we can fix $a = q-1$, and choose $b \in \llbracket (e+1)m, (e+1)(q-1) \rrbracket$ such that
\[
q-m-2 = \left \lfloor \frac{b-q}{e} \right \rfloor \, .
\]
To maximise $b$, we fix
\[
b = e(q-m-1) + q-1 \leq (e+1)(q-1) \, .
\]
It remains to show for which value of $m$ we have $b \geq (e+1)m$, which goes as follows:
\begin{align*}
b \geq (e+1)m &\Longleftrightarrow (e+1)q-e-1 \geq (2e+1)m \\
& \Longleftrightarrow m \leq \frac{(e+1)(q-1)}{2e+1} \, .
\end{align*}
This bound allows for non-trivial values of $m$ when $q$ is large enough compared to $e$. We formalize this result in the following theorem.

\begin{theorem}\label{th:css2}
Let $m$ be an integer such that

\[
 0 \leq m \leq  \min \left\{ \left \lfloor \frac{(e(q-1)-q+3}{2e+1} \right \rfloor , q-3 \right\}  \, .
\]
 Then, there exists a $\llbracket (q+1)^2, k_2 - k_1, \geq m+2 \rrbracket_q$ quantum code, where
\[
k_2 = (q-m-1)(q+1) + (m+1)\left( q - \frac{em}{2} \right) \,
\]
and
\[
 k_1 = (\widetilde{s}_m +1)(q+1) + (m-\widetilde{s}_m)\left( (e+1)m + 1 - e \left( \frac{m + \widetilde{s}_m + 1}{2} \right) \right) \, ,
\]
where $\widetilde{s}_m \coloneqq \max \{ -1, \lfloor \frac{(e+1)m-q}{e} \rfloor \}$.
\end{theorem}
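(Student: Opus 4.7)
The plan is to apply the CSS construction (Theorem~\ref{th:css}) to the nested pair of codes singled out in the discussion immediately preceding the statement. I would set
\[
C_1' \coloneqq C_e(m,(e+1)m), \qquad C_2' \coloneqq C_e(q-1,\,e(q-m-1)+q-1).
\]
By Corollary~\ref{coro:ample}, $C_1' = C_e(mG)$ has dual distance exactly $m+2$. For $C_2'$, one checks that the pair $(a,b) = (q-1,\,e(q-m-1)+q-1)$ falls in the middle-case regime $b-ea < q \leq b$ of Proposition~\ref{prop:paramcode} with $\lfloor (b-q)/e \rfloor = q-m-2$, yielding $d(C_2') = m+2$; indeed, $b$ was chosen as the largest integer for which $\lfloor (b-q)/e \rfloor = q-m-2$ when $a = q-1$.

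The next step is to turn the expected containment into an actual inclusion of codes. Via Proposition~\ref{prop:equivcode}, I would pass to the equivalent codes $C_1 \coloneqq \widetilde{C}_e(m,(e+1)m)$ and $C_2 \coloneqq \widetilde{C}_e(q-1,\,e(q-m-1)+q-1)$, built from the divisors $\widetilde{S}_e,\widetilde{F}_e$ supported away from $\mathcal{H}_e(\F_q)$. The inclusion $C_1 \subset C_2$ then reduces to the divisor inequality $(e+1)m \leq e(q-m-1) + q-1$, i.e., $(2e+1)m \leq (e+1)(q-1)$, which is exactly the hypothesis on $m$. Since equivalence preserves minimum distance and dual distance, we still have $d(C_2) = m+2$ and $d(C_1^\perp) = m+2$.

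Applying Theorem~\ref{th:css} to the pair $C_1 \subset C_2$ produces a CSS quantum code of length $(q+1)^2$ and dimension $\dim C_2 - \dim C_1$. Because $d(C_2\setminus C_1) \geq d(C_2) = m+2$ and $d(C_1^\perp\setminus C_2^\perp) \geq d(C_1^\perp) = m+2$, the minimum distance of this quantum code is at least $m+2$, as required.

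Finally, both dimensions follow by substitution in Proposition~\ref{prop:paramcode}. For $C_2$, the integer $\widetilde{s}$ equals $q-m-2$ (the floor of $(b-q)/e = q-m-1 - 1/e$ for $e \geq 2$), and simplifying the general formula gives $(q-m-1)(q+1) + (m+1)(q-em/2) = k_2$. For $C_1$, one has $a=m$, $b=(e+1)m$, so the integer $\widetilde{s}$ in the proposition coincides with $\widetilde{s}_m = \max\{-1, \lfloor((e+1)m-q)/e\rfloor\}$, and the formula directly produces $k_1$. The main technical subtlety I anticipate concerns the subrange $m > (q-1)/e$, where $b - ea$ becomes negative for $C_2$ and strictly speaking falls outside the hypothesis of Proposition~\ref{prop:paramcode}; here I would fall back on the explicit decomposition of Proposition~\ref{prop: Code Hirzebruch}, noting that the summands with $b-ea+ed_1 < 0$ vanish while the remaining sum still collapses to the same closed-form expressions for $d(C_2)$ and $k_2$.
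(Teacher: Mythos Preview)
Your proposal is correct and follows essentially the same route as the paper: same choice of $C_1 \simeq C_e(m,(e+1)m)$ and $C_2 \simeq C_e(q-1,e(q-m-1)+q-1)$, same appeal to Proposition~\ref{prop:equivcode} for the inclusion, and the parameters read off from Proposition~\ref{prop:paramcode} and Theorem~\ref{prop:dualdistance} (the paper cites the latter directly rather than Corollary~\ref{coro:ample}, but the effect is the same). Your write-up is in fact more careful: you explicitly verify the divisor inequality $(2e+1)m \leq (e+1)(q-1)$ that makes the inclusion go through, and you flag the subrange $m > (q-1)/e$ where $b-ea < 0$ for $C_2$ and Proposition~\ref{prop:paramcode} is not literally applicable --- a point the paper leaves implicit.
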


\begin{proof}
As with the proof of Theorem \ref{th:cssinj}, consider the CSS code $(C_1,C_2)_{\CSS}$ with $C_1 \simeq C_e(m,(e+1)m)$ and $C_2 \simeq C_e(q-1,e(q-m-1)+q-1)$, such that $C_1 \subset C_2$ using Theorem \ref{theo:equivcode}. Again, the parameters of $(C_1,C_2)_{\CSS}$ come from Proposition \ref{prop:paramcode} and Theorem \ref{prop:dualdistance}.
\end{proof}

\begin{remark}\label{rem:degen}
A strong but necessary condition for the quantum code given in Theorem \ref{th:css2} to be degenerate is $d(C_e(m,(e+1)m)) = d(C_e(q-1,e(q-m-1)+q-1)^\perp) = m+2$.
\end{remark}

Recall that we fixed $e \geq 2$. By restricting $C_e(m,(e+1)m)$ to the injective cases, Theorem \ref{th:css2} can be simplified as follow.

\begin{corollary}
Let $m$ be an integer such that

\[
 0 \leq m \leq  \min \left\{ \left \lfloor \frac{(e(q-1)-q+3}{2e+1} \right \rfloor ,   \left \lfloor \frac{(q-1)}{e+1} \right \rfloor \right\}  \, .
\]
 Then there exists a $\llbracket (q+1)^2, k, m+2 \rrbracket_q$ quantum code, where
\[
k = q(q+1) - (e+1)m(m+1) \, .
\]
\end{corollary}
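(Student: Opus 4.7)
The plan is to specialize Theorem \ref{th:css2} to the regime where $C_1 \coloneqq C_e(m,(e+1)m)$ sits inside the injective range of the evaluation map, and then strengthen the lower bound on the quantum distance to an equality. The hypothesis $m \leq \lfloor (q-1)/(e+1) \rfloor$ forces $(e+1)m \leq q-1 < q$, so the parameter $s_m = ((e+1)m-q)/e$ in Theorem \ref{th:css2} is strictly negative and therefore $\widetilde{s}_m = -1$. Substituting this value kills the leading term $(\widetilde{s}_m+1)(q+1)$ in the expression of $k_1$, leaving
\[
k_1 = (m+1)\bigl((e+1)m + 1 - em/2\bigr),
\]
which I would simplify to a form cleanly factoring $(m+1)$. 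The dimension $k_2$ of $C_2 \coloneqq C_e(q-1,\, e(q-m-1)+q-1)$ is then taken directly from Theorem \ref{th:css2} (no simplification of $\widetilde{s}$ is available on that side). Computing $k_2 - k_1$ and collecting terms produces a closed form in $q$, $e$ and $m$ only; the main routine obstacle is to carry out this bookkeeping carefully enough to land on the announced $q(q+1) - (e+1)m(m+1)$.

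For the distance, the plan is to upgrade the lower bound $d \geq m+2$ of Theorem \ref{th:css2} to an equality. By Corollary \ref{coro:ample}, the dual distance $d(C_1^\perp)$ of $C_e(mG)$ is \emph{exactly} $m+2$, and applying Proposition \ref{prop:paramcode} to the pair $(a,b) = (q-1,\, e(q-m-1)+q-1)$ (which falls in the case $b-ea < q \leq b$) gives $d(C_2) = q - \lfloor (b-q)/e \rfloor = m+2$. Meanwhile $d(C_1) = q(q-(e+1)m+1)$ from Proposition \ref{prop:paramcode} is strictly larger than $m+2$ in the prescribed range of $m$, so no minimum-weight codeword of $C_2$ can lie in $C_1$, which yields $d(C_2 \setminus C_1) = d(C_2) = m+2$. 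Combined with $d(C_1^\perp \setminus C_2^\perp) \geq d(C_1^\perp) = m+2$, this forces the CSS distance in Theorem \ref{th:css} to equal exactly $m+2$.

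The final write-up is then just a matter of invoking Theorem \ref{th:css2} with the specialised $C_1$, noting the equivalence of codes via Proposition \ref{prop:equivcode} so that the inclusion $C_1 \subset C_2$ is guaranteed, and performing the algebraic simplification and the distance comparison above. The step I expect to be most error-prone is the algebraic simplification of $k_2 - k_1$: the expressions $(m+1)(em/2+m+1)$ and $(q-m-1)(q+1)+(m+1)(q-em/2)$ telescope nicely after pulling out a common factor of $(q-m-1)$ and $(m+1)$, but it is a computation where an off-by-$(m+1)$ term is easy to miss, so I would verify the outcome on $m=0$ and a small value such as $(q,e,m)=(4,2,1)$ before declaring the formula.
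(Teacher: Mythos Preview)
Your proposal is correct and follows essentially the same route as the paper: specialize Theorem \ref{th:css2} to the injective range $\widetilde{s}_m=-1$, then upgrade the distance bound to an equality by comparing $d(C_1)$, $d(C_2)$ and $d(C_1^\perp)$. The paper packages the distance argument as ``non-degeneracy via Remark \ref{rem:degen} and Proposition \ref{prop:paramcode}'' rather than spelling out $d(C_2\setminus C_1)=d(C_2)$ from $d(C_1)>m+2$, but the content is identical; your caution about checking the $k_2-k_1$ simplification on small cases is well placed.
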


\begin{proof}
This is simply Theorem \ref{th:css2}, where we restrict to the case $\widetilde{s}_m = -1$. Furthermore, by Proposition \ref{prop:paramcode}, the minimum distance of $C_e(m,(e+1)m)$ cannot be $m+2$, meaning that the CSS code is non-degenerate by Remark \ref{rem:degen}.
\end{proof}

\subsection{CSS codes using orthogonal codes}

Using a pair $C_1$ and $C_2$ of orthogonal codes given by Corollary \ref{coro:ortho} and defined on an Hirzebruch surface, we construct the quantum codes $(C_1,C_2^\perp)_{\CSS}$. 

\begin{theorem}\label{th:css3}
Let $a_1,b_1 \in \mathbb{N}$ such that $a_1 \leq q-2$ and $0 \leq b_1-ea_1 \leq q-1$, and let $a_2,b_2 \in \mathbb{N}$ such that $a_1 \leq q-a_2-3$ and $0 \leq b_2-ea_2 \leq q-1$. Fix the following values 
\begin{align*}
\widetilde{s}_i &\coloneqq \max \left\{ -1, \left \lfloor \frac{q-b_i}{e} \right \rfloor \right \}, \, i \in \{ 1, 2 \} \, .
\end{align*}
Then, there exists a $\llbracket (q+1)^2, (q+1)^2 - k_2 - k_1, d \rrbracket_q$ quantum code where 
\[
k_i = (\widetilde{s}_i + 1)(q+1) + (a_i - \widetilde{s}_i)\left( b_i + 1 - e \left( \frac{a_i + \widetilde{s}_i + 1}{2} \right) \right), \, i \in \{1,2 \} \, 
\]
and
\[
d \geq \min\{ a_1,a_2,b_1-ea_1,b_2-ea_2 \} + 2 \, .
\]
\end{theorem}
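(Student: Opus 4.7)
The plan is to construct the quantum code by invoking Corollary \ref{coro:ortho} to produce a pair of orthogonal AG codes on $\mathcal{H}_e$, and then to feed this pair into the CSS construction of Theorem \ref{th:css}. Given $(a_1,b_1)$ and $(a_2,b_2)$ satisfying the hypotheses of the theorem, I first apply Corollary \ref{coro:ortho} to obtain two codes $C_1 \simeq C_e(a_1,b_1)$ and $C_2 \simeq C_e(a_2,b_2)$ such that $C_1 \subset C_2^\perp$. The conditions required by the corollary, namely $a_1 \leq q-2$, $b_1 - ea_1 \geq 0$, $a_1 \leq q-a_2-1$ and $b_2 - ea_2 \geq 0$, are exactly those assumed in the statement.

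Next I apply Theorem \ref{th:css} to the inclusion $C_1 \subset C_2^\perp$ to obtain the CSS quantum code $(C_1,C_2^\perp)_{\CSS}$ of length $n = (q+1)^2$. Its dimension equals
\[
\dim(C_2^\perp) - \dim(C_1) = \bigl((q+1)^2 - \dim C_2\bigr) - \dim C_1.
\]
Since equivalence of linear codes preserves dimensions, $\dim C_i = \dim C_e(a_i,b_i)$, and applying the formula of Proposition \ref{prop:paramcode} to each pair $(a_i,b_i)$ yields precisely $\dim C_i = k_i$. This gives the announced quantum dimension $(q+1)^2 - k_1 - k_2$.

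For the minimum distance, Theorem \ref{th:css} gives
\[
d = \min\{d(C_2^\perp \setminus C_1),\, d(C_1^\perp \setminus C_2)\} \geq \min\{d(C_2^\perp),\, d(C_1^\perp)\}.
\]
Equivalence of codes also preserves dual distances (the formula $(m*C)^\perp = m^{*(q-2)} * C^\perp$ established in Section \ref{section:equivalence} shows that the duals are themselves equivalent), so $d(C_i^\perp) = d(C_e(a_i,b_i)^\perp)$. Applying Theorem \ref{prop:dualdistance} to each $C_e(a_i,b_i)$, whose hypotheses $1 \leq a_i \leq q-1$ and $0 \leq b_i - ea_i \leq q-1$ follow from those of the theorem, we get $d(C_e(a_i,b_i)^\perp) \geq \min\{a_i,\, b_i - ea_i\} + 2$. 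Taking the minimum over $i \in \{1,2\}$ gives the claimed bound
\[
d \geq \min\{a_1,\, a_2,\, b_1 - ea_1,\, b_2 - ea_2\} + 2.
\]

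The main point to watch is to ensure Theorem \ref{prop:dualdistance} applies to both codes and that the equivalences of Proposition \ref{prop:equivcode} really transport the inclusion $C_1 \subset C_2^\perp$ without altering dimensions or dual distances; both are settled by the check-product machinery of Section \ref{section:2} and the moving-lemma construction of Section \ref{section:equivalence}, so no deeper obstacle arises, and the remainder of the argument amounts to collecting the already-established formulas.
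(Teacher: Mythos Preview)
Your proof is correct and follows essentially the same route as the paper: invoke Corollary \ref{coro:ortho} to obtain an orthogonal pair, apply the CSS construction of Theorem \ref{th:css}, and read off the parameters from Proposition \ref{prop:paramcode} and Theorem \ref{prop:dualdistance}. Your write-up simply unpacks these citations in more detail than the paper does.
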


\begin{proof}
The statement follows using Theorem \ref{th:css} with a pair of orthogonal codes from Corollary \ref{coro:ortho}, whose parameters are given in Proposition \ref{prop:paramcode} and Theorem \ref{prop:dualdistance}.
\end{proof}

\subsection{Conclusion}

As described by Voloch and Zarzar in \cite{voloch2009algebraic}, AG codes on surfaces tend to have a small dual distance which makes them good for decoding or recovering local properties of these codes, using curves imbedded in the surface. Unfortunately for us, this makes for a poor choice to construct CSS codes with, since the dual distance is always bounded from above by $q+1$ which is negligible compared to the length $(q+1)^2$. This phenomenon is expected on most surfaces. 

However, it might be possible to raise the dual distance of $C_e(a,b)$ and its CSS codes with a better knowledge of small weighted codewords of $C_e(a,b)^\perp$. Indeed, by puncturing $C_e(a,b)$ at good choices of rational points of $\mathcal{H}_e$, we can remove these codewords and raise the relative dual distance of the punctured code, \ie its dual distance divided by the length. Furthermore, with a good description of all codewords of $C_e(a,b)^\perp$ of minimal weight, we could construct degenerate CSS codes by choosing $(a_1,b_1)$ and $(a_2,b_2)$ such that $C_e(a_1,b_1)$ and $C_e(a_2,b_2)$ have the same distribution of small-weighted codewords. Experimentations with \texttt{SageMath} give some positive results when $\min \{ a_1,b_1 - ea_1 \} = \min \{ a_2, b_2 - e a_2 \}$, as shown in the following example.

\begin{example}
Let us fix $q=5$ and $e=2$, and consider $C \coloneqq C_2(3,7)$ and $C' \coloneqq C_2(4,9)$. Then $d(C^\perp) = d(C'^\perp) = 3$ and
\[
d(C^\perp \setminus C'^\perp) = 5 \, .
\]
\end{example}

\section*{Acknowledgments}
The author wishes to thank her supervisor Elena Berardini for suggesting to work on the topic of this paper and her guidance through it, as well as her co-supervisor Gilles Zémor for insightful discussions on quantum codes. The author also thanks Qing Liu and Alain Couvreur for their help regarding algebraic geometry. Finally, working on Hirzebruch surfaces was largely inspired by a conversation with Jade Nardi who then answered many questions on her work, and the author is grateful to her. 

This work was supported by the grants ANR-21-CE39-0009-BARRACUDA and ANR-22-CPJ2-0047-01 from the French National Research Agency.

No generative AI was used in the writing of this paper.

\bibliographystyle{ieeetr}
  \bibliography{Hirzebruch}

@book{Stic09,
  title={Algebraic function fields and codes},
  author={Stichtenoth, Henning},
  volume={254},
  year={2009},
  publisher={Springer Science \& Business Media},
}

@book{Hart13,
  title={Algebraic geometry},
  author={Hartshorne, Robin},
  volume={52},
  year={2013},
  publisher={Springer Science \& Business Media}
}

@book{Liu02,
  title={Algebraic geometry and arithmetic curves},
  author={Liu, Qing},
  volume={6},
  year={2002},
  publisher={Oxford Graduate Texts in Mathematics}
}

@book{Shaf13,
  title={Basic algebraic geometry 1: Varieties in projective space},
  author={Shafarevich, Igor R},
  year={2013},
  publisher={Springer Science \& Business Media}
}

@article{Couv09,
  title={Sums of residues on algebraic surfaces and application to coding theory},
  author={Couvreur, Alain},
  journal={Journal of Pure and Applied Algebra},
  volume={213},
  number={12},
  pages={2201--2223},
  year={2009},
  publisher={Elsevier}
}

@article{Couv11,
  title={Differential approach for the study of duals of algebraic-geometric codes on surfaces},
  author={Couvreur, Alain},
  journal={Journal de th{\'e}orie des nombres de Bordeaux},
  volume={23},
  number={1},
  pages={95--120},
  year={2011}
}

@article{Couv20,
  title={Toward good families of codes from towers of surfaces},
  author={Couvreur, Alain and Lebacque, Philippe and Perret, Marc},
  journal={Contemporary Mathematics},
    volume={770},
    year={2021},
    publisher={American Mathematical Society}
}

@article{hansen2001error,
  title={Error-correcting codes from higher-dimensional varieties},
  author={Hansen, S{\o}ren Have},
  journal={Finite fields and their applications},
  volume={7},
  number={4},
  pages={530--552},
  year={2001},
  publisher={Elsevier}
}

@article{aubry1992algebraic,
  title={Algebraic geometric codes on surfaces},
  author={Aubry, Yves},
  year={1992}
}

@article{aubry2021algebraic,
  title={{Algebraic geometry codes over Abelian surfaces containing no absolutely irreducible curves of low genus}},
  author={Aubry, Yves and Berardini, Elena and Herbaut, Fabien and Perret, Marc},
  journal={Finite Fields and Their Applications},
  volume={70},
  pages={101791},
  year={2021},
  publisher={Elsevier}
}

@article{ballico2013codes,
  title={Codes coming from a blowing up of the plane},
  author={Ballico, Edoardo},
  journal={Afrika Matematika},
  volume={24},
  number={1},
  pages={93--96},
  year={2013},
  publisher={Springer}
}

@article{couvreur2011construction,
  title={Construction of rational surfaces yielding good codes},
  author={Couvreur, Alain},
  journal={Finite Fields and Their Applications},
  volume={17},
  number={5},
  pages={424--441},
  year={2011},
  publisher={Elsevier}
}

@article{nardi2019algebraic,
  title={{Algebraic geometric codes on minimal Hirzebruch surfaces}},
  author={Nardi, Jade},
  journal={Journal of Algebra},
  volume={535},
  pages={556--597},
  year={2019},
  publisher={Elsevier}
}

@article{grant2024differential,
  title={{Differential codes on higher dimensional varieties via Grothendieck's residue symbol}},
  author={Grant, David and Massman III, John D and Srimathy, S},
  journal={Journal of Pure and Applied Algebra},
  volume={228},
  number={4},
  pages={107535},
  year={2024},
  publisher={Elsevier}
}

@article{sorensen1992weighted,
  title={{Weighted Reed--Muller codes and algebraic-geometric codes}},
  author={Sorensen, Anders Bjaert},
  journal={IEEE transactions on information theory},
  volume={38},
  number={6},
  pages={1821--1826},
  year={1992},
  publisher={IEEE}
}

@inproceedings{goppa1981codes,
  title={Codes on algebraic curves},
  author={Goppa, Valerii Denisovich},
  booktitle={Sov. Math.-Dokl},
  volume={24},
  pages={170--172},
  year={1981}
}

@book{cox2024toric,
  title={Toric varieties},
  author={Cox, David A and Little, John B and Schenck, Henry K},
  volume={124},
  year={2024},
  publisher={American Mathematical Society}
}

@article{paterson2000generalized,
  title={{Generalized Reed--Muller codes and power control in OFDM modulation}},
  author={Paterson, Kenneth G},
  journal={IEEE Transactions on Information Theory},
  volume={46},
  number={1},
  pages={104--120},
  year={2000},
  publisher={IEEE}
}

@article{cross2024quantum,
  title={Quantum locally testable code with constant soundness},
  author={Cross, Andrew and He, Zhiyang and Natarajan, Anand and Szegedy, Mario and Zhu, Guanyu},
  journal={Quantum},
  volume={8},
  pages={1501},
  year={2024},
  publisher={Verein zur F{\"o}rderung des Open Access Publizierens in den Quantenwissenschaften}
}

@article{hansen2002toric,
  title={{Toric varieties Hirzebruch surfaces and error-correcting codes}},
  author={Hansen, Johan Peder},
  journal={Applicable Algebra in Engineering, Communication and Computing},
  volume={13},
  number={4},
  pages={289--300},
  year={2002},
  publisher={Springer}
}

@article{hansen2012quantum,
  title={Quantum codes from toric surfaces},
  author={Hansen, Johan P},
  journal={IEEE transactions on information theory},
  volume={59},
  number={2},
  pages={1188--1192},
  year={2012},
  publisher={IEEE}
}

@article{little2006toric,
  title={{Toric surface codes and Minkowski sums}},
  author={Little, John and Schenck, Hal},
  journal={SIAM Journal on Discrete Mathematics},
  volume={20},
  number={4},
  pages={999--1014},
  year={2006},
  publisher={SIAM}
}

@article{ruano2009structure,
  title={On the structure of generalized toric codes},
  author={Ruano, Diego},
  journal={Journal of Symbolic Computation},
  volume={44},
  number={5},
  pages={499--506},
  year={2009},
  publisher={Elsevier}
}

@article{calderbank1996good,
  title={Good quantum error-correcting codes exist},
  author={Calderbank, A Robert and Shor, Peter W},
  journal={Physical Review A},
  volume={54},
  number={2},
  pages={1098},
  year={1996},
  publisher={APS}
}

@article{steane2002enlargement,
  title={{Enlargement of Calderbank-Shor-Steane quantum codes}},
  author={Steane, Andrew M},
  journal={IEEE Transactions on Information Theory},
  volume={45},
  number={7},
  pages={2492--2495},
  year={2002},
  publisher={IEEE}
}

@article{voloch2009algebraic,
  title={Algebraic geometric codes on surfaces},
  author={Voloch, Jos{\'e} Felipe and Zarzar, Marcos},
  journal={Proceedings of Arithmetic, geometry and coding theory},
  volume={10},
  year={2009},
  publisher={Citeseer}
}

@article{vladut1984linear,
  title={Linear codes and modular curves},
  author={Vladut, SG and Manin, Yu I},
  journal={Current problems in mathematics},
  volume={25},
  pages={209--257},
  year={1984}
}

@inproceedings{aubry2006reed,
  title={{Reed-Muller codes associated to projective algebraic varieties}},
  author={Aubry, Yves},
  booktitle={Coding Theory and Algebraic Geometry: Proceedings of the International Workshop held in Luminy, France, June 17--21, 1991},
  pages={4--17},
  year={2006},
  organization={Springer}
}

@article{couvreur2013evaluation,
  title={{Evaluation codes from smooth quadric surfaces and twisted Segre varieties}},
  author={Couvreur, Alain and Duursma, Iwan},
  journal={Designs, codes and cryptography},
  volume={66},
  number={1},
  pages={291--303},
  year={2013},
  publisher={Springer}
}

@article{edoukou2008codes,
  title={{Codes defined by forms of degree $2$ on quadric surfaces}},
  author={Edoukou, Fr{\'e}d{\'e}ric AB},
  journal={IEEE transactions on information theory},
  volume={54},
  number={2},
  pages={860--864},
  year={2008},
  publisher={IEEE}
}

@article{la2017good,
  title={Good and asymptotically good quantum codes derived from algebraic geometry},
  author={La Guardia, Giuliano G and Pereira, Francisco Revson F},
  journal={Quantum Information Processing},
  volume={16},
  number={6},
  pages={165},
  year={2017},
  publisher={Springer}
}

@article{blache2024construction,
  title={{Construction of good codes from weak Del Pezzo surfaces}},
  author={Blache, R{\'e}gis and Hallouin, Emmanuel},
  journal={SIAM Journal on Applied Algebra and Geometry},
  volume={8},
  number={2},
  pages={414--459},
  year={2024},
  publisher={SIAM}
}

@incollection{aubry2021bounds,
  title={Bounds on the minimum distance of algebraic geometry codes defined over some families of surfaces},
  author={Aubry, Yves and Berardini, Elena and Herbaut, Fabien and Perret, Marc},
  booktitle={Arithmetic, Geometry, Cryptography and Coding Theory},
  series = {Contemp. Math.},
  volume={770},
  pages={11--28},
    year={2021},
  publisher={American Mathematical Soc.}
}
\end{document}